\definecolor{mycolor}{RGB}{255,251,204}
\tikzstyle{mybox} = [draw=red, very thick,
\tikzstyle{fancytitle} =[fill=white, text=red]
\newtheorem{thm}{Theorem}[section]
\newtheorem{prop}[thm]{Proposition}
\newtheorem{lemma}[thm]{Lemma}
\newtheorem{cor}[thm]{Corollary}
\theoremstyle{remark}
\newtheorem*{assumption*}{\assumptionnumber}
\providecommand{\assumptionnumber}{}
\newenvironment{assumption}[1]
{%
	\renewcommand{\assumptionnumber}{\textbf{A.#1}}%
	\begin{assumption*}%
		\protected@edef\@currentlabel{A.#1}%
	}
	{%
	\end{assumption*}
}
\theoremstyle{definition}
\theoremstyle{remark}
\newtheorem{rem}[thm]{Remark}
\numberwithin{equation}{section}
\newcommand{\R}{\mathbb{R}}				      
\newcommand{\lnum}{\let\veqno\leqno}
\newcommand{\dps}{\displaystyle}	
\newcommand{\dom}{(0,T)\times (0,1)}
\newcommand{\domw}{(0,T)\times \omega}
\newcommand{\intq}{\int_0^T\int_0^1}
\newcommand{\intw}{\int_0^T\int_{\omega}}
\newcommand{\I}[1]{\int_0^T\int_0^1e^{2s\varphi}\left((s\lambda)\sigma a#1_x^2+(s\lambda)^2\sigma^2#1^2\right)  }
\newcommand{\n}[2]{\|#1\|_{_{#2}}}
\newcommand{\rhat}{\hat{\rho}}
\newcommand{\rast}{\rho_\ast}
\newcommand{\Gb}[2]{\int_{T/2}^{T}\int_0^1e^{2sA}\left[s\lambda\zeta a \left(|#1_x|^2+|#2_x|^2\right)+(s\lambda)^2\zeta^2 \left(|#1|^2+|#2|^2\right) \right]  }
\newcommand{\G}[2]{\int_0^T\int_0^1e^{2sA}\left[s\lambda\zeta a \left(|#1_x|^2+|#2_x|^2\right)+(s\lambda)^2\zeta^2 \left(|#1|^2+|#2|^2\right) \right]  }
\newcommand{\Ga}[2]{\int_0^{T/2}\int_0^1e^{2sA}\left[s\lambda\zeta a \left(|#1_x|^2+|#2_x|^2\right)+(s\lambda)^2\zeta^2 \left(|#1|^2+|#2|^2\right) \right]  }
\newcommand{\E}{E}
\newcommand{\F}{F}
\begin{document}

	\title[Null controllability of coupled degenerate system  with nonlocal terms]{Local null controllability of coupled degenerate systems with nonlocal terms and one control force}
	
	\author[R. Demarque]{R. Demarque}
	\address[R. Demarque]{\newline Departamento de Ciências da Natureza,
		Universidade Federal Fluminense,
		Rio das Ostras, RJ, 28895-532, Brazil}
	\email{reginaldodr@id.uff.br}
	\author[J. Límaco]{J. Límaco}
	\address[J. Límaco]{\newline  Departamento de Matemática Aplicada,
		Universidade Federal Fluminense,
		Niterói, RJ, 24020-140, Brazil}
	\email{jlimaco@vm.uff.br}
	
	\author[L. Viana]{L. Viana}
	\address[L. Viana]{\newline  Departamento de Análise,
		Universidade Federal Fluminense,
		Niter\'{o}i, RJ, 24020-140, Brazil}
	\email{luizviana@id.uff.br}
	\subjclass[2010]{Primary 35K65, 93B05; Secondary 35K55, 93C20}
	
	\keywords{Degenerate parabolic systems, controllability, nonlinear parabolic system, nonlocal term, Carleman Inequalities}


	\begin{abstract}
		\noindent

		In this paper, we are concerned with the internal control of a class of one-dimensional nonlinear parabolic systems with nonlocal and weakly degenerate diffusion coefficients. Our main theorem establishes a local null  controllability result with only one internal  control for a system of two equations. The proof, based on the ideias developed by Fursikov and Imanivilov, is obtained from the global null controllability of the linearized system provided by  \textit{Lyusternik's Inverse Mapping Theorem}.  This work extends the results  previously treated by the authors for just one equation. For the system, the main issue is to obtain similar results with just one internal control, which requires a new Carleman  estimate with the local term just depending on one of the state function.	
		\end{abstract}
	
	%
	\maketitle

	\section{Introduction}\label{intro}

	In this paper we will establish a local null controllability result for the degenerate system, with nonlocal terms, given by
	\begin{equation}
	\begin{cases}
	u_t-\left(\mu_1 \left(x,\int_0^1u \   \right)u_x \right)_x+f_1(t,x,u,v) =h\chi_\omega, & (t,x)\in \dom,\\
	v_t-\left(\mu_2 \left(x,\int_0^1v \   \right)v_x \right)_x+f_2(t,x,u,v)=0, & (t,x) \in \dom,\\
	u(t,0)=u(t,1)=v(t,0)=v(t,1)=0, &  t\in (0,T),\\
	u(0,x)=u_0(x) \ \ \mbox{ and }\ \ v(0,x)=v_0(x), & x\in  (0,1),
	\end{cases} 
	\label{pb1} 
	\end{equation}
	where $T>0$ is given and  $(u_0,v_0)$ is the initial data. Moreover, $h$ is the control function, $(u,v)$ is the associated state and $\chi_{\omega}$ represent the characteristic function of     $\omega=(\alpha,\beta)\subset\subset (0,1)$. Regarding the functions $\mu_1 ,\mu_2, f_1$ and $f_2$, we make the following assumptions:

	\begin{assumption}{1} \label{hyp_a}  Let $\ell_1,\ell_2:\R\to \R$ be  $C^1$ functions with bounded derivative and suppose that $\ell_i(0)>0$, for each $i=1,2$. We also consider $a \in C([0,1])\cap C^1((0,1])$ satisfying $a(0)=0$, $a>0$ in $(0,1]$, $a'\geq 0$ and  \begin{equation}\label{prop_a}
		xa'(x)\leq Ka(x),\ \ \forall x\in [0,1] \mbox{ and some } K\in [0,1).
		\end{equation}
		In other words, the function $a$ behaves $x^{\alpha}$, with $\alpha \in (0,1)$. 
		
		Throughout this article, we will consider the functions $\mu_1,\mu_2:[0,1]\times \R\to \R$, given by 
		\[	\mu_1 (x,r)=\ell _{1} (r)a(x)\ \ \ \mbox{ and } \ \ \ \mu_2 (x,r)=\ell_2(r)a(x).\]
		And, for the sake of simplicity, we will also consider $\ell_i(0)=1$,   for each $i=1,2$.
	\end{assumption}
	
	\begin{assumption}{2}\label{hyp_f} For $i\in \{ 1,2\}$, we suppose that $f_i:[0,T]\times [0,1]\times \R^2\to \R$ is a $C^1$ function, with bounded derivatives, such that $f_i(t,x,0,0)=0$.  We also consider $b_{ij}(t,x):=\partial_{j+2}f_i(t,x,0,0)\in  L^\infty(\dom)$, for any $i,j\in \{1,2\}$. And, let us assume that there exists $\omega_1\subset\subset \omega$ such that
		\[\inf \{b_{21}(t,x);\ (t,x)\in [0,T]\times \omega_1 \}>0.\]
	\end{assumption}
	
	The main purpose of this work is to prove the local null controllability of ($\ref{pb1}$) by means of one control. Precisely,
	we will obtain $h\in L^2(\dom)$ such that the associated state $(u,v)=(u(t,x),v(t,x))$ of \eqref{pb1} satisfies
	\begin{equation*}
	u(T,x)\equiv v(T,x)\equiv 0 \mbox{ for any } x\in [0,1],
	\end{equation*}
	at least if $\|(u_0,v_0)\|_{H_a^1 \times H_a^1}$ is sufficiently small, where $H_a^1$ is an appropriate weighted Hilbert space which will be defined later in Section \ref{sec-pre} .
	

	The main difficulty comes from  the fact that the diffusion coefficients degenerate at $x=0$ and have nonlocal terms, namely
	\begin{equation*}
	\left(\mu_1 \left(\cdot ,\int_0^1u \   \right)u_x \right)_x \text{  and } \left(\mu_2 \left(\cdot,\int_0^1u \   \right)u_x \right)_x
	\end{equation*}
	satisfy assumption \eqref{hyp_a}.
	
	It is important to remark that semilinear nondegenerate problems have been extensively studied over the last decades, see \cite{fattorini1971,fernandez2012,fernandez2000cost,fursikov1996,lebeau1995} for example.  
	
	However, it seems to us that there is also a large interest in degenerate operators when the degeneracy occurs at the boundary of the space domain. For instance, in \cite{oleinik1999mathematical}, it was developed a study about the Prandtl system for stationary flows, in which the related boundary layer system was reduced to a quasilinear degenerate parabolic equation. Degenerate operators also appear in probabilistic models, see \cite{feller1952parabolic,feller1954diffusion}, and in climate science, see \cite{floridia2014approximate}. 
	
	In the context of degenerated  systems, controllability was studied in the case of two coupled equations in \cite{cannarsa2009controllability,hassi2011null,hassi2011carleman}. Recently, Ait Benhassi et al., in  \cite{benhassi2017algebraic},
	generalize the Kalman rank condition for the null controllability to $n$-coupled linear degenerate parabolic systems with  $m$-controls.

	On the other hand, as it was pointed out in \cite{fernandez2012}, nonlocal terms type can be found in several natural phenomena, such as in the reaction-diffusion systems, see \cite{chang2003some}, and in nonlinear vibration theory, see \cite{medeiros2002vibrations} for example.

	In \cite{alabau2006carleman},  it was obtained the null controllability for the semilinear equation
	\begin{align}\label{cf}
	u_t-(a(x)u_x)_x+f(t,x,u)=h(t,x)\chi_{\omega }, \text{ where }(t,x)\in (0,T)\times (0,1). 
	\end{align}
	
	Based on this work, in \cite{jrl2016}, we have considered (\ref{cf}), replacing the second-order term $(au_x)_x$ by a specific degenerate nonlocal operator. In that new context, we have achieved a local null controllability result. For systems of parabolic equations, the
	main issue is often to reduce the number of control functions acting on the system (see \cite{benhassi2017algebraic,carreno2013local,coron2009null,fadili2017null}, for example), besides that,  as it was pointed out in \cite{ammar2011recent}, the problem of controlling coupled parabolic equations has a very different behavior with respect to the scalar case, for instance, boundary controllability is not equivalent to distributed controllability, approximate controllability is not equivalent to null controllability, and 
	``\textit{the list of open
		problems is long and there is a lot of work to be done in order to fully understand
		this challenging subject}''\cite{ammar2011recent}.  In this direction, the current work may be seen as a natural continuation of \cite{jrl2016} and a first step in order to understand parabolic system with nonlocal and degenerate diffusion coefficients of the type $\left(\mu \left(\cdot ,\int_0^1u \   \right)u_x \right)_x $.
	
	Our main result is the following:
	
	\begin{thm}\label{main}
		Under the assumptions on $\mu_1,\mu_2,f_1$ and $f_2$, the nonlinear system \eqref{pb1} is locally null-controllable at any time $T>0$, i.e., there exists $r>0$ such that, whenever $u_0,v_0\in H_a^1$ and $\n{(u_0,v_0)}{H_a^1}\leq r$, there exists a control $h\in L^2(\domw)$ associated to a state $(u,v)$ satisfying
		\begin{equation*}
		u(T,x)=v(x,T)=0, \text{ for every } x\in [0,1].
		\end{equation*}
	\end{thm}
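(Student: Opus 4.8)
The plan is to deduce Theorem \ref{main} from the global null controllability of the system linearized at the origin, following the Fursikov--Imanuilov methodology together with Lyusternik's inverse mapping theorem. First I would introduce the weighted Hilbert/Banach spaces (built from suitable Carleman weights blowing up as $t\to T$) in which the controlled trajectories and source terms live, and define a map
\[
\mathcal{N}(u,v,h)=\Big(u_t-(\mu_1(\cdot,{\textstyle\int_0^1 u})u_x)_x+f_1(t,x,u,v)-h\chi_\omega,\; v_t-(\mu_2(\cdot,{\textstyle\int_0^1 v})v_x)_x+f_2(t,x,u,v),\;u(0,\cdot),\,v(0,\cdot)\Big),
\]
where the weights in the domain space force $u(T,\cdot)=v(T,\cdot)=0$. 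Then $\mathcal{N}(0)=0$, and solving the control problem for initial data $(u_0,v_0)$ amounts to finding a preimage of $(0,0,u_0,v_0)$; since this target is close to $0$ when $\n{(u_0,v_0)}{H_a^1}\le r$, it suffices to show that $\mathcal{N}$ is locally surjective near the origin.

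Next I would compute $\mathcal{N}'(0)$, which is the linear operator associated with the control system
\[
\begin{cases}
u_t-(au_x)_x+b_{11}u+b_{12}v=h\chi_\omega+g_1,\\
v_t-(av_x)_x+b_{21}u+b_{22}v=g_2,
\end{cases}
\]
where $b_{ij}=\partial_{j+2}f_i(\cdot,\cdot,0,0)$ and the nonlocal diffusion collapses to $(au_x)_x$ because $\ell_i(0)=1$ and the base state vanishes. Surjectivity of $\mathcal{N}'(0)$ is exactly the global null controllability of this linear system with one control, which by duality reduces to an observability inequality for the adjoint system
\[
\begin{cases}
-\varphi_t-(a\varphi_x)_x+b_{11}\varphi+b_{21}\psi=0,\\
-\psi_t-(a\psi_x)_x+b_{12}\varphi+b_{22}\psi=0,
\end{cases}
\]
of the form $\n{\varphi(0)}{L^2}^2+\n{\psi(0)}{L^2}^2\le C\intw|\varphi|^2$, in which \emph{only} the component $\varphi$ (dual to the controlled equation) is observed.

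The central and hardest step is this one-observation estimate, which rests on a new system Carleman inequality. My strategy would be: (i) apply the known degenerate-parabolic Carleman estimate to each adjoint equation with the local observation placed on $\omega_1$ (legitimate since $\omega_1\subset\subset\omega$ lies away from the degeneracy at $x=0$), treating the coupling terms $b_{21}\psi$ and $b_{12}\varphi$ as right-hand sides; (ii) add the two inequalities, so that the global weighted norms of $(\varphi,\psi)$ are dominated by local integrals of \emph{both} $\varphi$ and $\psi$ on $\omega_1$ plus the global integrals of the coupling terms, the latter being absorbed on the left for $s,\lambda$ large since $b_{ij}\in L^\infty$; (iii) eliminate the remaining local $\psi$-term. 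Here Assumption \ref{hyp_f} is decisive: since $b_{21}\ge c>0$ on $[0,T]\times\omega_1$, the first adjoint equation gives $\psi=b_{21}^{-1}(\varphi_t+(a\varphi_x)_x-b_{11}\varphi)$ there, so inserting a cutoff $\eta\in C_c^\infty(\omega)$ with $\eta\equiv1$ on $\omega_1$ and integrating by parts to transfer the time and second-order derivatives of $\varphi$ onto the weight and onto $\varphi$ itself bounds the local $\psi$-integral by the local $\varphi$-integral plus a small fraction of the global left-hand side, which is absorbed. The delicate points I expect to be the main obstacle are precisely these integrations by parts against the degenerate weight: the weak degeneracy \eqref{prop_a} of $a$ at $x=0$ must be reconciled with the boundary contributions they generate, and the cross lower-order terms must be controlled uniformly in the large parameters.

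Finally, once the observability inequality is established, duality produces a control $h\in L^2(\domw)$ driving the linearized trajectory to rest together with the natural weighted estimates; this shows $\mathcal{N}'(0)$ admits a bounded right inverse, i.e. it is surjective onto the target space. It then remains to verify that $\mathcal{N}$ is of class $C^1$ near the origin, which follows from the smoothness and boundedness of the derivatives of $f_1,f_2$ and $\ell_1,\ell_2$ granted by Assumptions \ref{hyp_a}--\ref{hyp_f}, the weighted spaces being chosen so that the nonlocal and semilinear Nemytskii operators are well defined and differentiable there. Lyusternik's inverse mapping theorem then yields, for all $(u_0,v_0)$ with $\n{(u_0,v_0)}{H_a^1}\le r$, a control $h$ and an associated state $(u,v)$ with $u(T,\cdot)=v(T,\cdot)=0$, completing the proof.
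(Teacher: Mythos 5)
Your proposal is correct and follows essentially the same route as the paper: Lyusternik's inverse mapping theorem applied to the same map between weighted spaces, with surjectivity of the derivative reduced to global null controllability of the linearized system, and the crucial one-observation Carleman estimate obtained exactly as in the paper's Proposition \ref{carl1} --- exploiting $\inf b_{21}>0$ on $\omega_1$, a cutoff supported in $\omega$, and integrations by parts that use both adjoint equations to eliminate the local term in the second component. The only minor deviation is that the paper obtains the linear controllability together with the weighted estimates (needed so the controlled trajectory lands in $E$) via a penalized variational scheme (minimizing the functionals $J_n$ with regularized weights and passing to the limit) rather than the direct duality argument you invoke, but this is an implementation detail within the same Fursikov--Imanuvilov framework.
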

	
	The proof of Theorem \ref{main} will follow standard arguments (see for instance \cite{fernandez2012}, \cite{clark2013theoretical}, \cite{imanuvilov2001remarks}, \cite{fernandez2004local} and \cite{fernandez2006some}), based on Lyusternik's Inverse Mapping Theorem, which can be found in \cite{fursikov1996} and \cite{imanuvilov2003carleman}. To be more specific, we will see that the desired result is equivalent to find a solution to the equation
	\begin{equation}\label{eq}
	H(u,v,h)=(0,0,u_0,v_0),
	\end{equation}
	where $H:\E\to \F$ is a $C^1$ mapping between two appropriate Hilbert spaces, defined by
	\begin{equation*}
	H (u,v,h) = (H_{1} (u,v,h), H_{2} (u,v,h), u(\cdot ,0), v(\cdot ,0) ),
	\end{equation*}
	where 
	\[\displaystyle H_{1} (u,v,h) = u_t-\left(\mu_1 \left(x,\int_0^1u \   \right)u_x \right)_x+f_1(t,x,u,v) -h\chi_\omega\]
	and
	\[\displaystyle H_{2} (u,v,h) = v_t-\left(\mu_2 \left(x,\int_0^1v \   \right)v_x \right)_x+f_2(t,x,u,v) . \]
	
	In order to use Lysternik's Theorem, we need to prove that $H'(0)$ is onto. It is equivalent to prove  a global null controllability result  to  the  linearization of \eqref{pb1} (see the system (\ref{pb-lin}) below). This approach relies on a suitable Carleman estimate for the solutions of the adjoint problem associated to \eqref{pb-lin} (see Proposition \ref{prop3.2}).
	
	This paper is organized as follows: Section \ref{sec-pre} contains notations we use and a preliminary result. In Section \ref{sec-carl}, we prove a Carleman type inequality to solutions of \eqref{adj}, which also allows us to obtain an Observality inequality. Section \ref{sec-NCLS} is concerned with the global null controllability of \eqref{pb-lin} as well as two crucial additional estimates. Section \ref{sec-main} is devoted to the proof of Theorem \ref{main}. In Section \ref{sec6}, we present some comments and remarks. At the end, we  provide three appendices where we sketch the proof of wellposedness of problem {\eqref{pb1}} and two minors results we use throughout the paper.
	\section{Preliminaries}\label{sec-pre}

	\noindent
	
	In this section we will  state some notations and results which are necessary to prove Theorem \ref{main}. At first, we need to introduce some weighted spaces related to the function $a$,  namely
			\begin{align*}
		& \begin{multlined}[t][0.9\textwidth]
		H_a^1:= \{  u\in L^2(0,1);\ u\mbox{ is absolutely continuous in } (0,1],\\
		\sqrt{a}u_x\in L^2(0,1) \mbox{ and } u(1)=u(0)=0\},
		\end{multlined}\\
		& H_a^2:= \{  u\in H_a^1;\ au_x\in H^1(0,1) \},
		\end{align*}
		with  the norms defined by\\
		$\|u\|_{H_a^1}^2:=\|u\|_{L^2(0,1)}^2+\|\sqrt{a}u_x\|_{L^2(0,1)}^2$
		and $\|u\|_{H_a^2}^2:=\|u\|_{H_a^1}^2+\|(au_x)_x\|_{L^2(0,1)}^2$. 
		
		Alabau-Boussouira at al. in \cite{alabau2006carleman} introduced and studied some of the main properties of these spaces.
		
		As we pointed out in the Introduction, Lyusternik's Theorem requires the proof of a global null controllability result to a linear problem. Hence, we will present  a wellposedness result to this kind of problem that will be used later on the definition of our spaces in Section \ref{sec-main}. That result can be seen  in \cite{fadili2017null}.  Let us consider the linear system
		\begin{equation}
		\begin{cases}
		u_t-\left(a(x)u_x \right)_x+b_{11}(t,x)u+b_{12}(t,x)v =G_1, & (t,x) \mbox{ in }\dom,\\
		v_t-\left(a(x)v_x \right)_x+b_{21}(t,x)u+b_{22}(t,x)v =G_2, & (t,x) \mbox{ in }\dom,\\
		u(t,1)=u(t,0)=v(t,0)=v(t,1)=0, &  t \mbox{ in } (0,T),\\
		u(0,x)=u_0(x) \ \ \mbox{ and }\ \ v(0,x)=v_0(x), & x\mbox{ in } (0,1).
		\end{cases} 
		\label{pb-lin1} 
		\end{equation}

		\begin{prop}\label{prop-WP-lin}
			For all $G_1,G_2\in L^2(\dom)$ and $u_0,v_0\in L^2(0,1)$, there exists a unique weak solution $u,v\in C^0([0,T];L^2(0,1))\cap L^2(0,T;H_a^1)$ of \eqref{pb-lin1} and there exists a positive constant $C_T$ such that
			\begin{multline*}
			\sup_{t\in[0,T]}\left(\n{u(t)}{L^2(0,1)}^2+\n{v(t)}{L^2(0,1)}^2\right) +\int_0^T\left(\n{\sqrt{a}u_x}{L^2(0,1)}^2+ \n{\sqrt{a}v_x}{L^2(0,1)}^2\right)\\ 
			\leq C_T\left(\n{u_0}{L^2(0,1)}^2+\n{v_0}{L^2(0,1)}^2 +\n{G_1}{L^2(\dom)}+\n{G_2}{L^2(\dom)} \right).
			\end{multline*}	
			Moreover, if $u_0,v_0\in H_a^1$, then
			\[u,v\in H^1(0,T;L^2(0,1))\cap L^2(0,T;H_a^2)\cap C^0([0,T];H_a^1), \]
			and there exists a positive constant $C_T$ such that
			\begin{multline}\label{ineq1}
			\sup_{t\in[0,T]}\left(\n{u(t)}{H_a^1}^2+\n{v(t)}{H_a^1}^2\right)\\
			+\int_0^T\left(\n{u_t}{L^2(0,1)}^2+\n{v_t}{L^2(0,1)}^2+\n{(au_x)_x}{L^2(0,1)}^2+ \n{(av_x)_x}{L^2(0,1)}^2\right)\\
			\leq C_T\left(\n{u_0}{H_a^1}^2+\n{v_0}{H_a^1}^2 +\n{G_1}{L^2(\dom)}+\n{G_2}{L^2(\dom)} \right)
			\end{multline}
		\end{prop}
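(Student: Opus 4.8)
The plan is to construct the solution by a Galerkin scheme adapted to the degenerate operator, to derive the two a priori bounds by energy estimates, and then to pass to the limit; uniqueness and time-continuity follow at the end. Since both equations of \eqref{pb-lin1} share the same principal part $-(a\,\partial_x\cdot)_x$ and the coupling enters only through the zeroth-order terms with $L^\infty$ coefficients, the analysis reduces, up to Gr\"onwall absorption of the coupling, to that of a single weakly degenerate equation as in \cite{alabau2006carleman}.

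First I would recall from \cite{alabau2006carleman} that the operator $Au=-(a\,u_x)_x$ with domain $H_a^2$ is self-adjoint, nonnegative, and has compact resolvent on $L^2(0,1)$; hence there is an orthonormal basis $(e_k)_{k\ge 1}$ of $L^2(0,1)$ made of eigenfunctions $e_k\in H_a^2$. I would seek approximate solutions $u_m,v_m$ in $\mathrm{span}\{e_1,\dots,e_m\}$ solving the system projected onto this span; this is a linear ODE system for the Fourier coefficients and admits a unique global solution. For the first bound, testing the projected equations against $u_m$ and $v_m$ and summing gives
\[
\frac{1}{2}\frac{d}{dt}\left(\n{u_m}{L^2(0,1)}^2+\n{v_m}{L^2(0,1)}^2\right)+\n{\sqrt{a}\,\partial_x u_m}{L^2(0,1)}^2+\n{\sqrt{a}\,\partial_x v_m}{L^2(0,1)}^2=R_m(t),
\]
where $R_m$ collects the coupling and source contributions. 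Here the identity $\int_0^1(a\,\partial_x u_m)_x\,u_m\,dx=-\int_0^1 a\,(\partial_x u_m)^2\,dx$ is used, the boundary term vanishing at $x=1$ by the boundary condition and at $x=0$ by the weak degeneracy \eqref{prop_a}. Bounding $R_m$ by Young's inequality and $\|b_{ij}\|_{L^\infty}$ and applying Gr\"onwall's lemma yields the first stated estimate, uniformly in $m$.

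For the higher-order bound, assuming $u_0,v_0\in H_a^1$, I would instead test against $\partial_t u_m$ and $\partial_t v_m$, using
\[
-\int_0^1(a\,\partial_x u_m)_x\,\partial_t u_m\,dx=\frac{1}{2}\frac{d}{dt}\int_0^1 a\,(\partial_x u_m)^2\,dx ,
\]
which controls $\int_0^T(\n{\partial_t u_m}{L^2(0,1)}^2+\n{\partial_t v_m}{L^2(0,1)}^2)$ and $\sup_t(\n{\sqrt{a}\,\partial_x u_m}{L^2(0,1)}^2+\n{\sqrt{a}\,\partial_x v_m}{L^2(0,1)}^2)$ after Young and Gr\"onwall; reading $(a\,\partial_x u_m)_x=\partial_t u_m+b_{11}u_m+b_{12}v_m-G_1$ off the (projected) equation then bounds $\n{(a\,\partial_x u_m)_x}{L^2(0,1)}$ and gives the $L^2(0,T;H_a^2)$ term in \eqref{ineq1}. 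The uniform bounds allow me to extract a subsequence converging weakly-$\ast$ in the natural spaces, with strong $L^2(\dom)$ convergence by an Aubin--Lions argument, which is enough to pass to the limit in the weak formulation and to preserve the estimates by lower semicontinuity. Time-continuity follows from the embedding of $\{w\in L^2(0,T;H_a^1):\partial_t w\in L^2(0,T;(H_a^1)')\}$ into $C^0([0,T];L^2(0,1))$, and the analogous interpolation between $H_a^2$ and $L^2$ gives $C^0([0,T];H_a^1)$ in the regular case. Uniqueness is immediate from linearity and the first estimate applied to the difference of two solutions with zero data.

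I expect the only genuinely delicate point to be the justification of the two integration-by-parts identities at the degenerate endpoint $x=0$, that is, verifying that the boundary traces $a\,u_x\,u$ and $a\,u_x\,\partial_t u$ vanish there for functions in the weighted spaces. This is exactly where the weak-degeneracy hypothesis \eqref{prop_a} and the Hardy--Poincar\'e-type properties of $H_a^1$ and $H_a^2$ established in \cite{alabau2006carleman} are needed. Once these are secured, the coupling terms cause no additional difficulty, being lower order with bounded coefficients and absorbable by Gr\"onwall's inequality.
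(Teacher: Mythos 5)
Your proposal is correct and follows the standard Galerkin/energy route, which is exactly the approach this paper takes for well-posedness: note that the authors do not actually prove Proposition \ref{prop-WP-lin} in the text (they cite \cite{fadili2017null}), but the argument they carry out in Appendix A for the nonlinear system \eqref{pbum} has precisely your structure --- eigenfunction basis for $-(a\,u_x)_x$, testing with $(u_m,v_m)$ and then with $(\partial_t u_m,\partial_t v_m)$, Gr\"onwall absorption of the bounded zeroth-order coupling, and passage to the limit. The only differences are cosmetic: the appendix obtains the $(au_x)_x$ bound by testing with $-(au_{mx})_x$ rather than reading it off the equation (your variant is legitimate with the eigenfunction basis, since the projection is an $L^2$-contraction), and for this linear system the Aubin--Lions compactness you invoke is not needed, as weak convergence already suffices to pass to the limit in the weak formulation.
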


	Now, let us present a Carleman estimate that will play an important role in the next section. First, consider $\omega'=(\alpha',\beta')\subset\subset \omega$ and let $\psi:[0,1]\to \R$ be a $C^2 $  function such that 
	\begin{align}\label{functions1}
	\psi(x):=\begin{cases}
	\phantom{-}\int_0^x \frac{y}{a(y)}dy,\ x\in [0,\alpha')\\
	-\int_{\beta'}^x \frac{y}{a(y)}dy,\ x\in [\beta',1].
	\end{cases}
	\end{align}
	For $\lambda$ sufficiently large, define
	\begin{multline}\label{functions}
	\theta(t):=\frac{1}{[t(T-t)]^4}, \ \eta(x):=e^{\lambda(|\psi|_\infty+\psi)},\ \sigma(x,t):=\theta(t)\eta(x) \mbox{ and }\\
	\varphi(x,t):=\theta(t)(e^{\lambda(|\psi|_\infty+\psi)}-e^{3\lambda|\psi|_\infty}).
	\end{multline}
	
	Let us also  consider the linear system
	\begin{equation}\label{adj-jlr}
	\left\{\begin{array}{ll}
	-\xi_t-\left(a\left(x\right)\xi_x \right)_x+c(t,x)\xi=F(t,x), & (t,x)\in \dom, \\
	\xi(t,1)=\xi(t,0)=0, & t\in (0,T),\\
	\xi(T,x)=\xi_T(x), & x\in(0,1),
	\end{array}\right.   \end{equation}
	where $a$ satisfies assumption \ref{hyp_a}, $c\in L^\infty(\dom)$,  $F\in L^2(\dom)$ and $\xi_T\in L^2(0,1)$.
	
	The following Carleman estimate, proved in \cite{jrl2016}, holds for the solution to \eqref{adj-jlr}:
	
	\begin{prop}\label{car-jlr}
		There exist $C>0$ and $\lambda_0,s_0>0$ such that every solution  $\xi$ of (\ref{adj-jlr}) satisfies, for all $s\geq s_0$ and $\lambda\geq \lambda_0$, 
		\begin{equation}
		\intq e^{2s\varphi}\left((s\lambda)\sigma a\xi_x^2+(s\lambda)^2\sigma^2\xi^2 \right) 
		\leq C\left(\intq e^{2s\varphi}|F|^2\   +(\lambda s)^3\intw e^{2s\varphi}\sigma^3\xi^2\   \right),\label{carl_jlr}
		\end{equation} 
		where the constants $C, \lambda_0,s_0$  only depends on $\omega$, $a$, $\n{c}{L^\infty(\dom)}$ and $T$.
	\end{prop}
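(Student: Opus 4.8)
The plan is to follow the classical Fursikov--Imanuvilov conjugation method, adapted to the degenerate operator $-(a\,\cdot_x)_x$ along the lines of Cannarsa--Martinez--Vancostenoble and Alabau-Boussouira et al. First I would introduce the conjugated variable $w:=e^{s\varphi}\xi$, which inherits the vanishing Dirichlet conditions at $x=1$ and, because the choice $\theta(t)=[t(T-t)]^{-4}$ forces $\varphi\to-\infty$ as $t\to 0^+,T^-$, also makes $w$ and all the weighted quantities below vanish at $t=0,T$. A direct computation transforms \eqref{adj-jlr} into $Pw=e^{s\varphi}F-c\,w$, where
\[
Pw=-w_t-(aw_x)_x+2s a\varphi_x w_x - s^2 a\varphi_x^2 w + s(a\varphi_x)_x w + s\varphi_t w .
\]
I would then split $P=P_++P_-$ into its formally self-adjoint part $P_+w=-(aw_x)_x-s^2a\varphi_x^2 w+s\varphi_t w$ and its formally skew-adjoint part $P_-w=-w_t+2sa\varphi_x w_x+s(a\varphi_x)_x w$ in $L^2(\dom)$.

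The heart of the argument is the elementary identity
\[
\n{P_+w}{L^2(\dom)}^2+\n{P_-w}{L^2(\dom)}^2+2\langle P_+w,P_-w\rangle_{L^2(\dom)}=\n{e^{s\varphi}F-cw}{L^2(\dom)}^2 .
\]
Since the two squared norms are nonnegative, the whole estimate reduces to extracting the dominant positive contributions from the cross term $\langle P_+w,P_-w\rangle$. This is obtained by expanding the product and integrating by parts in both $x$ and $t$. Using $\varphi_x=\lambda\psi'\sigma$ together with the explicit construction of $\psi$ in \eqref{functions1}, the leading terms produced this way reproduce exactly the coercive left-hand side of \eqref{carl_jlr}, namely $(s\lambda)\sigma a\xi_x^2$ and $(s\lambda)^2\sigma^2\xi^2$, once the substitution $w=e^{s\varphi}\xi$ is undone (the resulting cross corrections, of lower order in $s\lambda$, being absorbed for $s,\lambda$ large). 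The boundary contributions at $x=1$ vanish since $w(t,1)=0$; at the degenerate endpoint $x=0$ the factor $a(0)=0$ and the specific form of $\psi$ near $0$ suppress the relevant traces; and the temporal boundary terms at $t=0,T$ vanish by the blow-up of $\theta$.

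It then remains to localize and to absorb the lower-order terms. The weight $\psi$ is monotone with the correct sign on $[0,\alpha')$ and $[\beta',1]$, so on those intervals the cross term is coercive; only on the transition set $\omega'=(\alpha',\beta')\subset\subset\omega$, where $\psi$ is merely interpolated, can the sign fail. I would move those terms, estimated crudely, to the right-hand side, where they are controlled by $(\lambda s)^3\intw e^{2s\varphi}\sigma^3\xi^2$, thereby producing the local observation term. The potential contribution satisfies $\n{cw}{L^2(\dom)}^2\le\n{c}{L^\infty(\dom)}^2\intq e^{2s\varphi}\xi^2$, which is absorbed into the coercive term $(s\lambda)^2\intq e^{2s\varphi}\sigma^2\xi^2$ because $\sigma$ is bounded below by a positive constant, provided $s\lambda$ is large enough depending on $\n{c}{L^\infty(\dom)}$; this is precisely where the dependence of $C,\lambda_0,s_0$ on $\n{c}{L^\infty(\dom)}$ enters.

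The main obstacle is the careful handling of the degeneracy at $x=0$: the integrations by parts that generate the dominant terms also produce boundary traces and intermediate quantities (such as $a\psi'^2$ and $(a\varphi_x)_x$) that are singular as $x\to 0^+$, and controlling them requires the weak-degeneracy condition \eqref{prop_a}, $xa'(x)\le Ka(x)$ with $K\in[0,1)$, together with the Hardy-type inequalities available in the weighted spaces $H_a^1,H_a^2$. Checking that every such term is either sign-definite or dominated by the left-hand side, uniformly in the degeneracy, is the delicate bookkeeping that makes the argument close, and it is exactly the computation carried out in \cite{jrl2016}.
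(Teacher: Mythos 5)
First, a point of order: this paper never proves Proposition \ref{car-jlr} at all --- it is imported verbatim from the authors' earlier work \cite{jrl2016} (``The following Carleman estimate, proved in \cite{jrl2016}, holds\dots''), so the only meaningful benchmark is the argument in that reference, which is indeed the Fursikov--Imanuvilov conjugation scheme you outline. Your skeleton is the right one and several of its joints are correct: the conjugated operator $P$, the self-adjoint/skew-adjoint splitting $P=P_++P_-$, the basic identity for $\|e^{s\varphi}F-cw\|^2$, the vanishing of the temporal boundary terms because $\varphi\to-\infty$ at $t=0,T$, and the absorption of the potential using that $\sigma=\theta\eta$ is bounded below by a positive constant (with $\theta\geq (4/T^2)^4$ and $\eta\geq 1$), which is exactly where $\n{c}{L^\infty(\dom)}$ enters $s_0,\lambda_0$.

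However, two of your central claims misstate the mechanism, and they are precisely the two places where the degeneracy makes this estimate nontrivial; as written, the argument would not close there. (a) The cross term $\langle P_+w,P_-w\rangle$ does \emph{not} ``reproduce exactly'' the zero-order term $(s\lambda)^2\sigma^2\xi^2$. With $\psi'=x/a$ near the degeneracy, every zero-order contribution generated by the expansion carries a degenerate spatial factor, of the type $s^3\lambda^3(x^2/a)\sigma^3$ or $s^3\lambda^4(x^4/a^2)\sigma^3$; since \eqref{prop_a} gives $a(x)\geq a(1)x^K$ with $K<1$, one has $x^2/a\leq x^{2-K}/a(1)\to 0$ as $x\to 0^+$, while $\sigma$ stays bounded below (recall $\psi(0)=0$), so these terms cannot dominate $\sigma^2\xi^2$ near $x=0$. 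The uniform zero-order term on the left of \eqref{carl_jlr} must be manufactured from the gradient term $(s\lambda)\intq e^{2s\varphi}\sigma a\xi_x^2$ through a weighted Hardy--Poincar\'e inequality in $H_a^1$ (this is what \eqref{prop_a} is really for); in your sketch the Hardy inequalities are relegated to controlling singular traces, which misses their actual role. (b) The defect produced on $\omega'$, where the sign of the cross term can fail, contains a localized \emph{gradient} term of the form $s\lambda\sigma a\xi_x^2$, and this cannot be ``estimated crudely'' by $(\lambda s)^3\intw e^{2s\varphi}\sigma^3\xi^2$: no pointwise inequality converts $a\xi_x^2$ into $\xi^2$. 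One must invoke the equation itself --- a Caccioppoli-type estimate with a cutoff supported in $\omega$ --- to trade the local gradient term for a local zero-order term, and it is this step that produces the cubic power $(\lambda s)^3\sigma^3$ in the observation term. Both repairs are standard and are carried out in \cite{jrl2016}, but they are genuine missing steps in the proposal, not bookkeeping.
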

	
	For the sake of simplicity, we will  introduce the operator
	\begin{equation*}
	I(s,\lambda,\xi):=\I{\xi}.
	\end{equation*}

	Finally, let us present the following version of \textit{Lyusternik's Inverse Mapping Theorem} that can be found for instance in \cite{alekseev1987optimal,fursikov1996}.
\begin{thm}[Lyusternik]\label{lyusternik}
		Let $E$ and $F$ be two Banach spaces, $H: E\to F$ a $C^1$ mapping and put $\eta_0=H(0)$. If $H'(0):E\to F$ is onto, then there exist $r>0$ and $\tilde{H}:B_r(\eta_0)\subset F\to E$ such that
\[		H(\tilde{H}(\xi))=\xi,\ \forall \xi\in B_r(\eta_0),\]
		that is, $\tilde{H}$ is a right inverse of $H$. In addition, there exists $K>0$ such that
		\[\n{\tilde{H}(\xi)}{E}\leq K\n{\xi-\eta_0}{F}, \forall \xi\in B_r(\eta_0).  \]
	\end{thm}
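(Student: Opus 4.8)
The statement is the classical surjective (Graves--Lyusternik) inverse mapping theorem, so the plan is to manufacture the Lipschitz right inverse $\tilde H$ by a Newton-type iteration driven by a quantitative form of the surjectivity of $L:=H'(0)$, together with the uniform smallness of the nonlinear remainder that the $C^1$ hypothesis provides.

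First I would record the only consequence of surjectivity that the argument needs. Since $L:E\to F$ is a bounded, surjective linear map between Banach spaces, the open mapping theorem yields a constant $c>0$ such that for every $y\in F$ there is some $x\in E$ with $Lx=y$ and $\|x\|_E\le c\|y\|_F$; I fix once and for all such a (not necessarily linear or continuous) right-inverse selection $S:F\to E$, so that $LS=\mathrm{id}_F$ and $\|Sy\|_E\le c\|y\|_F$. In the Hilbert setting actually used later one could instead take the bounded linear right inverse coming from the orthogonal splitting $E=\ker L\oplus(\ker L)^{\perp}$, but the pointwise selection above suffices and keeps the proof valid in the stated Banach generality. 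Next I would use the $C^1$ regularity to tame the remainder $R(x):=H(x)-\eta_0-Lx$: continuity of $x\mapsto H'(x)$ at $0$ gives, for any prescribed $\varepsilon>0$, a radius $\rho>0$ with $\|H'(x)-L\|\le\varepsilon$ for $\|x\|_E\le\rho$, whence by the mean value inequality $\|R(x_1)-R(x_2)\|_F\le\varepsilon\|x_1-x_2\|_E$ on $\bar B_\rho(0)$. I would fix $\varepsilon$ so that $c\varepsilon\le\tfrac12$ and keep the associated $\rho$.

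Then, for a target $\xi$ with $\|\xi-\eta_0\|_F$ small, I would run $x_0=0$, $x_{n+1}=x_n+S\bigl(\xi-H(x_n)\bigr)$. Writing $y_n:=\xi-H(x_n)$ and using $LS=\mathrm{id}$, a direct computation gives $y_{n+1}=-\bigl(R(x_{n+1})-R(x_n)\bigr)$, hence $\|y_{n+1}\|_F\le\varepsilon\|x_{n+1}-x_n\|_E\le c\varepsilon\|y_n\|_F\le\tfrac12\|y_n\|_F$, as long as the iterates stay in $\bar B_\rho(0)$. Since $\|x_{n+1}-x_n\|_E\le c\|y_n\|_F\le c\,2^{-n}\|\xi-\eta_0\|_F$, the increments are summable with total mass at most $2c\|\xi-\eta_0\|_F$; choosing $r>0$ with $2cr\le\rho$ forces the whole orbit to remain in $\bar B_\rho(0)$ whenever $\|\xi-\eta_0\|_F\le r$, and makes $(x_n)$ Cauchy. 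Passing to the limit $x_n\to\tilde H(\xi)$ and using $y_n\to0$ with the continuity of $H$ yields $H(\tilde H(\xi))=\xi$ on $B_r(\eta_0)$, while the telescoping bound gives immediately $\|\tilde H(\xi)\|_E\le 2c\|\xi-\eta_0\|_F$, i.e.\ the desired estimate with $K=2c$.

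The genuinely delicate point I expect is the uniform control of $R$ along the iteration: one must choose a single pair $(\varepsilon,\rho)$ and an a priori radius $r$ of admissible targets so that every iterate stays in $\bar B_\rho(0)$ and the residuals decay geometrically. This is exactly where the $C^1$ hypothesis, rather than mere differentiability at $0$, is essential, since it upgrades pointwise differentiability to the uniform Lipschitz bound on $R$ that powers the contraction. The only other subtlety is that the open-mapping selection $S$ is in general neither linear nor continuous; the scheme is arranged so that only the inequalities $LS=\mathrm{id}_F$ and $\|Sy\|_E\le c\|y\|_F$ are ever invoked, which circumvents any need for $\ker L$ to be complemented in the general Banach space statement.
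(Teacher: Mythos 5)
Your proof is correct: the open-mapping selection $S$, the uniform Lipschitz bound on the remainder $R$ furnished by the $C^1$ hypothesis, and the Newton-type iteration $x_{n+1}=x_n+S(\xi-H(x_n))$ with the identity $y_{n+1}=-(R(x_{n+1})-R(x_n))$ together give a complete, standard proof of the Graves--Lyusternik theorem, including the Lipschitz estimate with $K=2c$. Note, however, that the paper itself does not prove this statement at all --- it is quoted with a reference to Alekseev--Tikhomirov--Fomin and Fursikov--Imanuvilov --- so there is no in-paper argument to compare against; your iteration scheme is essentially the classical proof found in those cited sources.
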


	\section{Carleman and Observability inequalities} \label{sec-carl}
	
	In order to prove that map $H$ is onto, we have   to prove a global null controllability result  to  the  linearization of \eqref{pb1}, given by
	\begin{equation}
	\begin{cases}
	u_t-\left(a(x)u_x \right)_x+b_{11}(t,x)u+b_{12}(t,x)v =h\chi_\omega+g_1, & (t,x) \mbox{ in }\dom,\\
	v_t-\left(a(x)v_x \right)_x+b_{21}(t,x)u+b_{22}(t,x)v =g_2, & (t,x) \mbox{ in }\dom,\\
	u(t,1)=u(t,0)=v(t,0)=v(t,1)=0, &  t \mbox{ in } (0,T),\\
	u(0,x)=u_0(x) \ \ \mbox{ and }\ \ v(0,x)=v_0(x), & x\mbox{ in } (0,1),
	\end{cases} 
	\label{pb-lin} 
	\end{equation}
	where $g_1$, $g_2$ and $h$ belong to appropriate $L^2$-weighted spaces which we will specify later on.  To this purpose it is crucial to obtain an appropriate  Carleman estimate for solutions to
	\begin{equation}
	\begin{cases}
	-y_t-\left(a(x)y_x \right)_x+b_{11}(t,x)y+b_{21}(t,x)z =F_1, & (t,x) \mbox{ in }\dom,\\
	-z_t-\left(a(x)z_x \right)_x+b_{12}(t,x)y+b_{22}(t,x)z =F_2, & (t,x) \mbox{ in }\dom,\\
	y(t,0)=y(t,1)=z(t,0)=z(t,1)=0, &  t \mbox{ in } (0,T),\\
	y(T,x)=y_T(x),\ z(T,x)=z_T(x), & x\in (0,1),
	\end{cases} 
	\label{adj} 
	\end{equation}
	which is the adjoint problem of \eqref{pb-lin}. 
	
	\begin{prop}\label{carl1}There exist positive constants $C,\lambda_0$ and $s_0$ such that, for any $s\geq s_0$, $\lambda\geq \lambda_0$ and any $y_{_T},z_{_T}\in L^2(\dom)$, the corresponding solution $(y,z)$ to \eqref{adj} satisfies	
		\begin{multline}
		\intq e^{2s\varphi}\left((s\lambda)\sigma a(y_x^2+z_x^2)+(s\lambda)^2\sigma^2(y^2+z^2) \right) \\
		\leq C\left(\intq e^{2s\varphi}s^4\lambda^4\sigma^4(|F_1|^2+|F_2|^2)\   +\intw e^{2s\varphi}s^8\lambda^8\sigma^8y^2\   \right).\label{carl}
		\end{multline} 
	\end{prop}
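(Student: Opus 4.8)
The plan is to reduce the coupled estimate \eqref{carl} to the scalar Carleman inequality of Proposition~\ref{car-jlr}, applied to each equation of \eqref{adj} with the same weight, and then to \emph{remove the local observation of $z$} by exploiting the coupling coefficient $b_{21}$ in the first equation, whose positivity on $\omega_1$ is guaranteed by Assumption~\ref{hyp_f}. To make the coupling usable I would first fix the auxiliary interval $\omega'=(\alpha',\beta')$ in \eqref{functions1} so that $\omega'\subset\subset\omega_1$; then the scalar estimate, whose local term may be localized in $\omega'$, produces observations of both $y$ and $z$ inside $\omega_1$, exactly where $b_{21}\ge\delta>0$.

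First I would treat the first equation as a scalar equation for $y$ with potential $c=b_{11}$ and source $F_1-b_{21}z$, and the second as a scalar equation for $z$ with potential $c=b_{22}$ and source $F_2-b_{12}y$. Applying Proposition~\ref{car-jlr} to each and adding, the boundedness of the $b_{ij}$ gives
\[ I(s,\lambda,y)+I(s,\lambda,z)\le C\left(\intq e^{2s\varphi}(|F_1|^2+|F_2|^2+y^2+z^2)+(s\lambda)^3\int_0^T\int_{\omega'}e^{2s\varphi}\sigma^3(y^2+z^2)\right). \]
Since $\sigma\ge(4/T^2)^4>0$ on $[0,T]$ and each $I(\cdot)$ carries the term $(s\lambda)^2\sigma^2$ on the square of the function, the global integrals $\intq e^{2s\varphi}(y^2+z^2)$ are absorbed into the left-hand side once $s\lambda$ is large; the source integrals are already dominated by the heavier weight $s^4\lambda^4\sigma^4$ appearing in \eqref{carl}.

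The remaining and decisive obstacle is to bound the \emph{local} term in $z$ by a local term in $y$ alone. Here I would solve the first equation for $z$ on $\omega_1$, where $b_{21}z=F_1+y_t+(ay_x)_x-b_{11}y$, fix a cutoff $\zeta\in C^\infty_c(\omega_1)$ with $\zeta\equiv1$ on $\omega'$, write $\rho:=(s\lambda)^3 e^{2s\varphi}\sigma^3$, and estimate
\[ (s\lambda)^3\int_0^T\int_{\omega'}e^{2s\varphi}\sigma^3 z^2 \le \frac{1}{\delta}\int_0^T\int_0^1 \zeta^2\rho\, z\,(b_{21}z) = \frac{1}{\delta}\int_0^T\int_0^1 \zeta^2\rho\, z\left(F_1+y_t+(ay_x)_x-b_{11}y\right). \]
The contributions with $F_1$ and $b_{11}y$ are harmless (an admissible source and a local $y$-term), while the genuinely hard pieces are those carrying $y_t$ and $(ay_x)_x$, i.e.\ derivatives of $y$ of higher order than those present on the left-hand side of \eqref{carl}.

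To absorb these I would integrate by parts, transferring the derivatives from $y$ onto the compactly supported weight $\zeta^2\rho\, z$: the term with $y_t$ is integrated by parts in $t$, producing $z_t$, which is then replaced through the second equation $z_t=-(az_x)_x+b_{12}y+b_{22}z-F_2$; the term with $(ay_x)_x$ is integrated by parts twice in $x$, the boundary contributions vanishing because $\zeta$ is supported away from $x=0,1$ and $\rho$ vanishes at $t=0,T$. After these manipulations every resulting term is, by Young's inequality, either a multiple of $z^2$ or $a z_x^2$ weighted so as to be absorbed into $I(s,\lambda,z)$ for $s,\lambda$ large, or a purely local $y^2$-integral supported in $\omega_1\subset\omega$. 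Because each integration by parts differentiates $\rho$, and $|\partial_t\varphi|$, $|\partial_x\varphi|$ grow like positive powers of $s\lambda\sigma$, the powers of $s$, $\lambda$ and $\sigma$ pile up, which is exactly why the local $y$-term in \eqref{carl} must carry the large factor $s^8\lambda^8\sigma^8$. The computational heart of the argument — and its only real difficulty — is this bookkeeping of the integrations by parts, together with the verification that every derivative term produced can indeed be absorbed at the cost of the single local $y$-observation; collecting all contributions and choosing $s,\lambda$ large then yields \eqref{carl}.
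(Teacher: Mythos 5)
Your overall strategy coincides with the paper's: apply Proposition \ref{car-jlr} to each equation, add and absorb the global coupling terms, then exploit $\inf_{\omega_1} b_{21}>0$ to trade the local observation of $z$ for one of $y$, by testing the first equation of \eqref{adj} against a weighted cutoff times $z$ and substituting $z_t$ from the second equation. However, there is a genuine gap exactly at the step you describe as bookkeeping. First, taken literally, integrating $\intq \chi^2 (s\lambda)^3 e^{2s\varphi}\sigma^3\, z\,(ay_x)_x$ by parts \emph{twice} in $x$ (writing $\chi$ for your cutoff, to avoid clashing with the paper's $\zeta$), so as to transfer both derivatives onto the factor containing $z$, produces second derivatives $z_{xx}$, which nothing controls: $I(s,\lambda,z)$ bounds only weighted $z^2$ and $a z_x^2$. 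One must stop after a single integration by parts, which leaves the mixed term $\intq \chi^2 (s\lambda)^3 e^{2s\varphi}\sigma^3 a\, z_x y_x$; the same mixed term also arises from the $y_t$ piece after substituting $z_t=-(az_x)_x+b_{12}y+b_{22}z-F_2$ and integrating by parts once in $x$.

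Second, and decisively: when this mixed term is split by Young's inequality, the leftover is \emph{not} ``a purely local $y^2$-integral'' but a local \emph{first-order} term, of the form $C_\varepsilon\intq \chi^2 e^{2s\varphi}s^5\lambda^5\sigma^5 a\, y_x^2$. This is controlled neither by the left-hand side of \eqref{carl}, where $y_x^2$ carries only the weight $s\lambda\sigma$, nor by any local $y^2$ observation. The paper closes the argument with an additional Caccioppoli-type estimate that your proposal omits: multiply the first equation of \eqref{adj} by $\chi^2 e^{2s\varphi}s^5\lambda^5\sigma^5 y$ and integrate by parts; the resulting terms are weighted local $y^2$ terms, a source term, and a mixed $yz$ term whose Young splitting produces $\varepsilon (s\lambda)^2\sigma^2 z^2$ (absorbed by $I(s,\lambda,z)$) plus the local observation $C_\varepsilon\intw e^{2s\varphi}s^8\lambda^8\sigma^8 y^2$ --- which is precisely where the power $s^8\lambda^8\sigma^8$ in \eqref{carl} comes from. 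Without this extra estimate, the absorption you invoke does not close, so your final claim that ``every derivative term produced can indeed be absorbed'' is not justified as stated.
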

	
	\begin{proof}
		Firstly, we rewrite the first equation in \eqref{adj} as
		\[-y_t-\left(a(x)y_x \right)_x+b_{11}(t,x)y=F_1-b_{21}z,  (t,x) \mbox{ in }\dom.\]
		So, we apply Proposition \ref{car-jlr} for the case in which $\xi=y$, $F=F_1-b_{21}z$, $c=b_{11}$ and $\omega=\omega_1$ to obtain 
		\begin{equation*}
		I(s,\lambda,y) 
		\leq  C\bigg(\intq e^{2s\varphi}|F_1|^2+\n{b_{21}}{_{L^\infty}}\intq e^{2s\varphi}|z|^2 +\int_0^T\int_{\omega_1} e^{2s\varphi}\lambda^3s^3\sigma^3|y|^2 \bigg).
		\end{equation*}
		
		Proceeding in the same way for the second equation, we get an analogous inequality
		\begin{equation*}
		I(s,\lambda,z) 
		\leq  C\bigg(\intq e^{2s\varphi}|F_2|^2+\n{b_{12}}{_{L^\infty}}\intq e^{2s\varphi}|y|^2 +\int_0^T\int_{\omega_1} e^{2s\varphi}\lambda^3s^3\sigma^3|z|^2 \bigg).
		\end{equation*}
		Now, we add this two inequalities and take  $s$ and $\lambda$ sufficiently large such that $I(s,\lambda,y)$ will  absorb the integral  depend on $|y|^2$, and $I(s,\lambda,z)$  the integral depend on $|z|^2$. This will give us the following inequality
		\begin{equation*}
		I(s,\lambda,y)+I(s,\lambda,z)\leq C\left(\intq e^{2s\varphi}(|F_1|^2+|F_2|^2)+\int_0^1\int_{\omega_1} e^{2s\varphi}\lambda^3s^3\sigma^3(|y|^2+|z|^2) \right).
		\end{equation*}
		
		Thus, in order to obtain \eqref{carl},  it is sufficient to show that there exists a small  $\varepsilon>0$  such that 
		\begin{multline*}
		\int_0^1\int_{\omega_1}e^{2s\varphi}\lambda^3s^3\sigma^3|z|^2
		\leq \varepsilon I(s,\lambda,z)\\ + C\left(\intq e^{2s\varphi}s^4\lambda^4\sigma^4(|F_1|^2+|F_2|^2)+\int_0^1\int_{\omega} e^{2s\varphi}\lambda^8s^8\sigma^8|y|^2 \right).
		\end{multline*}
		
		Let us take $\chi\in C_0^\infty(\omega)$ satisfying $0\leq \chi\leq 1$ and $\chi\equiv 1$ in $\omega_1$. Since $\dps\inf b_{21}>0$, we can easily see that
		\[\int_0^1\int_{\omega_1}e^{2s\varphi}\lambda^3s^3\sigma^3|z|^2\leq C\intw \chi b_{21}e^{2s\varphi}\lambda^3s^3\sigma^3|z|^2.\]

		Now, multiplying the first equation in \eqref{adj} by $e^{2s\varphi}s^3\lambda^3\sigma^3\chi z$ and integrating over $\dom$, we get
		\begin{align*}
		\intq \chi b_{21} e^{2s\varphi}s^3\lambda^3\sigma^3 |z|^2  = &\intq  \chi e^{2s\varphi}s^3\lambda^3\sigma^3 zF_1+\intq \chi e^{2s\varphi}s^3\lambda^3\sigma^3zy_t
		\\
		& +\intq \chi e^{2s\varphi}s^3\lambda^3\sigma^3(ay_x)_xz-\intq \chi b_{11} e^{2s\varphi}s^3\lambda^3\sigma^3 yz\\
		=: & I_1+I_2+I_3+I_4.
		\end{align*}

		Next, we need to estimate $I_1,I_2,I_3$ and $I_4$. Firstly, from Young's inequality, we have
		\begin{align}\label{I1}
		I_1\leq &  C\intq \chi  e^{2s\varphi}s^3\lambda^3\sigma^3 |z||F_1|
		\leq  \varepsilon \intq e^{2s\varphi}s^2\lambda^2\sigma^2 |z|^2+C_\varepsilon\intq   \chi^2e^{2s\varphi}s^4\lambda^4\sigma^4 |F_1|^2\nonumber\\
		\leq & \varepsilon I(s,\lambda,z)+C_\varepsilon\intq e^{2s\varphi}s^4\lambda^4\sigma^4 |F_1|^2.
		\end{align}
		
		In the same way, since $b_{11}$ is bounded, it is immediate that
		\begin{align}\label{I4}
		I_4\leq &  C\intq \chi  e^{2s\varphi}s^3\lambda^3\sigma^3 |y||z|
		\leq  \varepsilon \intq e^{2s\varphi}s^2\lambda^2\sigma^2 |z|^2+C_\varepsilon\intq   \chi^2e^{2s\varphi}s^4\lambda^4\sigma^4 |y|^2\nonumber\\
		\leq & \varepsilon I(s,\lambda,z)+C_\varepsilon\intw e^{2s\varphi}s^8\lambda^8\sigma^8 |y|^2.
		\end{align}
		
		Using integration by parts,  we will split up $I_2$ and $I_3$ in several integrals. In fact, 
		\begin{multline*}
		I_3= -\intq \chi s^3\lambda^3\sigma^3 e^{2s\varphi} az_xy_x+\intq\left(a(\chi  e^{2s\varphi}\sigma^3)_x\right)_xs^3\lambda^3yz\\+\intq s^3\lambda^3(\chi  e^{2s\varphi}\sigma^3)_xaz_xy.
		\end{multline*}
		and, recalling  that $e^{2s\varphi} $ vanishes at $0$ and $T$ and  using the second equation of \eqref{adj}, we have
		\begin{align}
		I_2= & -\intq \left[\chi s^3\lambda^3(e^{2s\varphi}\sigma^3)_{_t}+\chi s^3\lambda^3\sigma^3e^{2s\varphi}b_{22}\right]yz +\intq\chi s^3\lambda^3\sigma^3e^{2s\varphi}(az_x)_xy\nonumber\\ 
		&\ \ \ \ -\intq\chi s^3\sigma^3e^{2s\varphi}b_{12}y^2 +\intq\chi s^3\lambda^3\sigma^3e^{2s\varphi}yF_2\nonumber\\ 
		=& -\intq \left[\chi s^3\lambda^3(e^{2s\varphi}\sigma^3)_{_t}+\chi s^3\lambda^3\sigma^3e^{2s\varphi}b_{22}\right]yz  -\intq s^3\lambda^3(\chi e^{2s\varphi}\sigma^3)_xaz_xy\nonumber\\ 
		& -\intq \chi s^3\lambda^3\sigma^3e^{2s\varphi}az_xy_x -\intq\chi s^3\lambda^3\sigma^3e^{2s\varphi}b_{12}y^2 +\intq\chi s^3\lambda^3\sigma^3e^{2s\varphi}yF_2. \label{I2}
		\end{align}
		
		Thus,
		\begin{align}\label{I2I3}
		I_2+I_3= &  \intq \left[-\chi s^3\lambda^3(e^{2s\varphi}\sigma^3)_{_t}-\chi s^3\lambda^3\sigma^3e^{2s\varphi}b_{22}+s^3\lambda^3\left(a(\chi  e^{2s\varphi}\sigma^3)_x\right)_x\right]yz \nonumber\\
		& -2\intq \chi s^3\lambda^3\sigma^3e^{2s\varphi}az_xy_x -\intq\chi s^3\lambda^3\sigma^3e^{2s\varphi}b_{12}y^2\nonumber\\
		& +\intq\chi s^3\lambda^3\sigma^3e^{2s\varphi}yF_2 \nonumber \\
		=& J_1+J_2+J_3+J_4.
		\end{align}
		
		Now, it remains estimates these four integrals. It is immediate that
		\begin{equation}\label{J3}
		J_3\leq C\intw e^{2s\varphi} s^8\lambda^8\sigma^8y^2.
		\end{equation}
		and, from Young's inequality, that
		\[J_4\leq \intq e^{2s\varphi}s^4\lambda^4\sigma^4|F_2|^2+\intw e^{2s\varphi}s^8\lambda^8\sigma^8 y^2.\]

		In order to estimate $J_1$, we will analyze each term between brackets. Firstly, we observe that all the terms are  multiplied by $\chi$, which vanishes outside of $\omega$. Clearly, 
		\[|\chi s^3\lambda^3\sigma^3e^{2s\varphi}b_{22}|\leq C\chi s^5\lambda^5 \sigma^5 e^{2s\varphi}. \]
		Since $|\sigma_x|\leq C\lambda \sigma$, $|\sigma_{xx}|\leq C\lambda^2 \sigma$  and $a\in C^1(\omega)$, after distributing the  derivatives with respect to $x$, we can see that 
		\[|s^3\lambda^3\left(a(\chi  e^{2s\varphi}\sigma^3)_x\right)_x|\leq  C\chi s^5\lambda^5 \sigma^5 e^{2s\varphi}.\]
		Likewise, the relations $|\varphi_t|\leq C\sigma^2$ and  $|\sigma_t|\leq C\sigma^2$ yield
		\[|-\chi s^3\lambda^3(e^{2s\varphi}\sigma^3)_{_t}|\leq  C\chi s^5\lambda^5 \sigma^5 e^{2s\varphi}.\]

		As a conclusion,
		\begin{equation*}
		J_1\leq C\intq \chi s^5\lambda^5 \sigma^5 e^{2s\varphi}|yz|\leq \varepsilon I(s,\lambda,z)+C_\varepsilon \intw s^8\lambda^8\sigma^8e^{2s\varphi}y^2.
		\end{equation*}

		The last step is to deal with $J_2$. To do this, we notice that 
		\begin{align*}
		J_2\leq & \varepsilon\intq e^{2s\varphi}s\lambda\sigma az_x^2+C_\varepsilon\intq \chi^5e^{2s\varphi}s^5\lambda^5\sigma^5y_x^2\\
		\leq & \varepsilon I(s,\lambda,z)+C_\varepsilon\intq \chi^5e^{2s\varphi}s^5\lambda^5\sigma^5y_x^2.
		\end{align*}
		Hence, we only need to estimate the last integral. Multiplying the first equation in \eqref{adj} by $\chi^2e^{2s\varphi}s^5\lambda^5\sigma^5y$, integrate over $\dom$ and integrating by parts we get that
		\begin{align*}
		&    \intq \chi^2e^{2s\varphi}s^5\lambda^5\sigma^5y_x^2 \\
		&\phantom{\leq } \leq  \intq \left[-\frac{1}{2}\chi^2s^5\lambda^5(e^{2s\varphi}\sigma^5)_{_t}+ \frac{1}{2}s^5\lambda^5\left[a(e^{2s\varphi}\sigma^5\chi^2)_{_x}\right]_x-\chi^2b_{11}e^{2s\varphi}s^5\lambda^5\sigma^5\right]y^2\\
		&\phantom{\leq }\phantom{\leq } -\intq \chi^2b_{21}e^{2s\varphi}s^5\lambda^5\sigma^5yz+\intq \chi^2e^{2s\varphi}s^5\lambda^5\sigma^5yF_1.
		\end{align*}
		
		We can see that all the integrals here are of the same type of those in \eqref{I2I3}. Following the same arguments developed there, we have the result.
	\end{proof}
	
	Now we need to prove a Carleman inequality  for solutions of problem \eqref{adj}  with weights which do not vanish at $t=0$. It is necessary in order to guarantee  the null controllability results in Theorem {\ref{linearcontrol}} and Theorem {\ref{main}}. We will give more details in Remark {\ref{rem1}}. 
	
	Consider a function $m\in C^\infty([0,T])$ satisfying
	
	\[\left\{\begin{array}{ll}
	m(t)\geq t^4(T-t)^4, & t\in (0, T/2];\\
	m(t)= t^4(T-t)^4, & t\in \left[T/2,T\right];\\
	m(0)>  0, &
	\end{array}
	\right.\]
	and define
	
	\[\tau(t):=\frac{1}{m(t)},\ \ \zeta(x,t):=\tau(t)\eta(x)\ \ \mbox{ and }\ \ A(t,x):=\tau(t)(e^{\lambda(|\psi|_\infty+\psi)}-e^{3\lambda|\psi|_\infty}),\]
	where $(t,x)\in [0,T)\times [0,1]$. As usual, we introduce the operators
	\begin{align*}
	&\Gamma(s,\xi,\vartheta):=\G{\xi}{\vartheta}, \\
	&\Gamma_1(s,\xi,\vartheta):=\Ga{\xi}{\vartheta}\\
	\text{and }& \\
	&  \Gamma_2(s,\xi,\vartheta):=\Gb{\xi}{\vartheta}.
	\end{align*}

	\begin{prop}\label{prop3.2}
		There exist positive constants $C,\lambda_0$ and $s_0$ such that, for any $s\geq s_0$, $\lambda\geq \lambda_0$ and any $y_{_T},z_{_T}\in L^2(\dom)$ the corresponding solution $(y,z)$ to \eqref{adj} satisfies	
		\begin{multline}
		\G{y}{z}\\
		\leq C\left(\intq e^{2sA}s^4\lambda^4\zeta^4(|F_1|^2+|F_2|^2)\   +\intw e^{2sA}s^8\lambda^8\zeta^8|y|^2\   \right).\label{carlA}
		\end{multline} 
	\end{prop}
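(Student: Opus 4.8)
\emph{Overall strategy.} The plan is to deduce \eqref{carlA} from the already-established estimate \eqref{carl} of Proposition~\ref{carl1} by localizing in time around $t=T/2$. The decisive observation is that $\tau\equiv\theta$ on $[T/2,T]$, so $A\equiv\varphi$ and $\zeta\equiv\sigma$ there; hence the piece of \eqref{carlA} carried by $\Gamma_2(s,y,z)$ coincides with the corresponding part of \eqref{carl} and is essentially already controlled. On $[0,T/2]$ the situation is opposite to the one in Proposition~\ref{carl1}: since $m(0)>0$, the functions $\tau,\zeta,A$ and their time derivatives stay bounded and $\tau$ stays bounded below, so the weight $e^{2sA}$ does \emph{not} degenerate as $t\to 0$. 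Writing $\Gamma(s,y,z)=\Gamma_1(s,y,z)+\Gamma_2(s,y,z)$, I would estimate the two pieces separately.

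\emph{The term $\Gamma_2$.} First I would record the pointwise comparison $e^{2s\varphi}\sigma^{k}\le C\,e^{2sA}\zeta^{k}$ on $\overline{\dom}$, valid uniformly for $s\ge s_0$, $\lambda\ge\lambda_0$ and each fixed $k$: on $[T/2,T]$ it is an equality, while on $[0,T/2]$ one has $\varphi\le A$ (because $\theta\ge\tau$ and the factor $e^{\lambda(|\psi|_\infty+\psi)}-e^{3\lambda|\psi|_\infty}$ is negative), and the super-algebraic decay of $e^{2s\varphi}$ as $\theta\to\infty$ dominates the algebraic growth of $(\sigma/\zeta)^{k}=(\theta/\tau)^{k}$. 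Restricting the left-hand side of \eqref{carl} to $[T/2,T]$ gives $\Gamma_2(s,y,z)\le \text{LHS of }\eqref{carl}$; applying Proposition~\ref{carl1} and then the comparison to convert the weights $e^{2s\varphi}\sigma^{4}$ and $e^{2s\varphi}\sigma^{8}$ on the right-hand side into $e^{2sA}\zeta^{4}$ and $e^{2sA}\zeta^{8}$ yields $\Gamma_2(s,y,z)\le C\big(\text{RHS of }\eqref{carlA}\big)$.

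\emph{The term $\Gamma_1$.} This is the heart of the argument and the step I expect to be the main obstacle. On $[0,T/2]$ I would run a weighted energy (dissipation) estimate for the backward system \eqref{adj}: multiply the two equations by $e^{2sA}$ times appropriate powers of $s\lambda\zeta$ applied to $y$ and $z$, integrate over $(0,T/2)\times(0,1)$, and integrate by parts in $x$, checking that the boundary contributions at $x=0,1$ vanish in the weighted spaces $H_a^1,H_a^2$ despite the degeneracy of $a$. Because $m(0)>0$, every term coming from the time derivative of the weight and from the spatial derivatives $A_x=\lambda\psi_x\zeta$, $\zeta_x=\lambda\psi_x\zeta$ is of the same Carleman order as the integrand of $\Gamma_1$; using that $a(\psi_x)^2$ stays bounded near $x=0$ thanks to \eqref{prop_a}, together with Young's inequality, these terms are absorbed into $\varepsilon\,\Gamma_1(s,y,z)$ for $s,\lambda$ large. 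What remains is the time-boundary term at $t=T/2$ together with the weighted source terms. Integrating to a variable endpoint $t'\in[T/2,3T/4]$ and averaging, the boundary term becomes a slab integral $\int_{T/2}^{3T/4}\!\int_0^1 e^{2sA}(s\lambda)^2\zeta^2(y^2+z^2)$, which is dominated by $\Gamma_2(s,y,z)$ and hence, by the previous step, by the right-hand side of \eqref{carlA}; the sources are controlled directly by $\intq e^{2sA}s^4\lambda^4\zeta^4(|F_1|^2+|F_2|^2)$. The non-negative contribution at $t=0$ stays on the left-hand side, which is precisely the gain encoded by a weight that does not vanish at $t=0$.

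\emph{Conclusion.} Adding the two estimates and absorbing the $\varepsilon\,\Gamma_1$ term gives $\Gamma(s,y,z)=\Gamma_1(s,y,z)+\Gamma_2(s,y,z)\le C\big(\text{RHS of }\eqref{carlA}\big)$, which is \eqref{carlA}. The delicate points, where I would spend most of the effort, are (i) carrying out the $[0,T/2]$ energy estimate so that the constant is genuinely independent of $s$ and $\lambda$ — this is why the multipliers must reproduce the $\Gamma_1$-scaling rather than crudely bounding the (bounded but $\lambda$-large) weights by constants — and (ii) the bookkeeping of the degenerate integration-by-parts boundary terms, for which the structural assumption \eqref{prop_a} and the definitions of $H_a^1,H_a^2$ are essential.
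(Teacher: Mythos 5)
Your treatment of $\Gamma_2$ coincides with the paper's: on $[T/2,T]$ one has $\tau=\theta$, hence $A=\varphi$ and $\zeta=\sigma$, and the pointwise comparison $e^{2s\varphi}\sigma^{k}\le Ce^{2sA}\zeta^{k}$ turns the right-hand side of \eqref{carl} into that of \eqref{carlA}; that half is fine. The genuine gap is in your $\Gamma_1$ step. You propose multiplying \eqref{adj} by $e^{2sA}(s\lambda\zeta)^{p}y$ (resp.\ $z$) and absorbing all weight-derivative terms into $\varepsilon\,\Gamma_1$, claiming they are ``of the same Carleman order as the integrand of $\Gamma_1$''. They are not. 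First, to produce the gradient part $e^{2sA}s\lambda\zeta\,a|y_x|^2$ of $\Gamma_1$ you must take $p=1$; the temporal term then carries $\partial_t e^{2sA}=2sA_te^{2sA}$ with $|A_t|=|\tau'|\,(e^{3\lambda|\psi|_\infty}-\eta)$, and its ratio to the zero-order integrand $(s\lambda)^2\zeta^2|y|^2$ of $\Gamma_1$ is $s|A_t|/(s\lambda\zeta)\ge c\,|\tau'|\,e^{\lambda|\psi|_\infty}/(\lambda\tau)$ (since $\eta\le e^{2\lambda|\psi|_\infty}$ while $e^{3\lambda|\psi|_\infty}-\eta\ge e^{3\lambda|\psi|_\infty}-e^{2\lambda|\psi|_\infty}$). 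This ratio is independent of $s$ and blows up as $\lambda\to\infty$, so it is never $O(\varepsilon)$ wherever $\tau'\neq 0$: absorption fails for all large $s,\lambda$. Second, the spatial cross terms $\int (w^2)_x\,a\,y\,y_x$ produce, after Young or a second integration by parts, terms of size $e^{2sA}s^3\lambda^3\zeta^3(a\psi_x^2)|y|^2$ — one full power of $s\lambda\zeta$ \emph{above} anything present in $\Gamma_1$; such terms are dominated only by the $s^3\lambda^3\zeta^3$ bulk terms of a genuine Carleman estimate, which your energy identity does not generate. Indeed, structurally, the energy identity puts no positive bulk $|y|^2$ term on the left at all (only the gradient term and time-boundary terms), so there is nothing into which to ``absorb''; the bulk term can only be recovered by Gronwall, and the Gronwall exponent is then $\sup_x|2sA_t+\cdots|\sim s|\tau'|e^{3\lambda|\psi|_\infty}$, yielding constants like $e^{CTse^{3\lambda|\psi|_\infty}}$ — exactly the non-uniformity your point (i) was meant to avoid.

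The repair is what the paper actually does, and it is simpler: on $[0,3T/4]$ drop the weights entirely. Multiplying \eqref{adj} by $-y,-z$ gives the unweighted dissipation inequality \eqref{eq3.5}; integrating it (via Gronwall) from $t\in[0,T/2]$ to $t+T/4$ gives
\begin{equation*}
\int_0^{T/2}\!\!\int_0^1(|y|^2+|z|^2)\le C\int_{T/4}^{3T/4}\!\!\int_0^1(|y|^2+|z|^2)+C\int_0^{3T/4}\!\!\int_0^1(|F_1|^2+|F_2|^2),
\end{equation*}
and a further integration yields the analogous bound for $\int_0^{3T/4}\!\int_0^1 a(|y_x|^2+|z_x|^2)$. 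One then reinstates the weights by pure pointwise comparison: $e^{2sA}(s\lambda\zeta)^n$ is bounded \emph{above} on $[0,T/2]$ (this is where $m(0)>0$ enters), and $e^{2s\varphi}(s\lambda\sigma)^n$ is bounded \emph{below} on $[T/4,3T/4]$, so the slab term becomes part of the left-hand side of \eqref{carl} and the conclusion follows from Proposition \ref{carl1}. Note that this second comparison costs a constant depending on $s$ and $\lambda$ (the lower bound of $e^{2s\varphi}\sigma^2$ on the slab degenerates as $s\to\infty$), so even the paper's constant in \eqref{carlA} is not uniform in $s,\lambda$; this is harmless for the application, where $s$ and $\lambda$ are fixed once and for all, but it means the uniformity you aimed for is achieved by neither argument — and the mechanism you proposed for it does not close.
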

	
	\begin{proof} In order to estimate $\Gamma_2(s,y,z)$, let us   observe that   $e^{2s\varphi}\sigma^n\leq Ce^{2sA}\zeta^n$ for all $(t,x)\in [0,T]\times[0,1]$ and $n\geq 0$. Since $\tau=\theta$ and $A=\varphi$ in $[T/2,T]$, Carleman inequality \eqref{carl} implies
		\[\Gamma_2(s,y,z)\leq  C\left(\intq e^{2sA}s^4\lambda^4\zeta^4(|F_1|^2+|F_2|^2)\   +\intw e^{2sA}s^8\lambda^8\zeta^8|y|^2\   \right).\]
		
		Now, we will prove an analogous estimate for $\Gamma_1(s,y,z)$, arguing as in \cite{clark2013theoretical}. Multiplying the first and the second equations of \eqref{adj} by $-y$ and $-z$, respectively, and integrating over $[0,1]$, we obtain
		\begin{multline}\label{eq3.5}
		-\frac{d}{dt}\left(\n{y}{L^2(0,1)}^2+\n{z}{L^2(0,1)}^2\right)-C \left(\n{y}{L^2(0,1)}^2+\n{z}{L^2(0,1)}^2\right)\\+2\left(\n{\sqrt{a}y_x}{L^2(0,1)}^2+\n{\sqrt{a}z_x}{L^2(0,1)}^2\right)
		\leq \n{F_1}{L^2(0,1)}^2+\n{F_2}{L^2(0,1)}^2,
		\end{multline}
		which implies
		\[-\frac{d}{dt}\left[e^{Ct}\left(\n{y}{L^2(0,1)}^2+\n{z}{L^2(0,1)}^2\right)\right]\leq e^{Ct}\left(\n{F_1}{L^2(0,1)}^2+\n{F_2}{L^2(0,1)}^2\right).\]
		
		Integrating from a $t\in [0,T/2]$ to $t+T/4$, we get
		\begin{multline*}
		\n{y}{L^2(0,1)}^2+\n{z}{L^2(0,1)}^2
		\leq   e^{CT}\int_0^{3T/4}\left( \n{F_1}{L^2(0,1)}^2+\n{F_2}{L^2(0,1)}^2\right)\\
		+e^{3CT/4}\left( \n{y(t+T/4)}{L^2(0,1)}^2+\n{z(t+T/4)}{L^2(0,1)}^2\right).
		\end{multline*}
		
		Hence, we conclude that
		\begin{multline}\label{eq3.6}
		\int_{0}^{T/2}	\left(\n{y}{L^2(0,1)}^2+\n{z}{L^2(0,1)}^2\right)\leq  e^{CT}\frac{ T}{2}\int_0^{3T/4}\left( \n{F_1}{L^2(0,1)}^2+\n{F_2}{L^2(0,1)}^2\right)\\
		+e^{3CT/4}\int_{T/4}^{3T/4}\left( \n{y}{L^2(0,1)}^2+\n{z}{L^2(0,1)}^2\right).
		\end{multline}

		Now, integrating inequality \eqref{eq3.5}  over $[0,t]$, where $t\in [0,T]$, we take
		\begin{multline}\label{eq3.6-2}
		\int_0^t\left(\n{\sqrt{a}y_x}{L^2(0,1)}^2+\n{\sqrt{a}z_x}{L^2(0,1)}^2\right)\leq \frac{1}{2}\left(\n{y}{L^2(0,1)}^2+\n{z}{L^2(0,1)}^2\right)
		\\+C\left[ \int_0^t\left(\n{y}{L^2(0,1)}^2+\n{z}{L^2(0,1)}^2\right) +\int_0^t\left(\n{F_1}{L^2(0,1)}^2+\n{F_2}{L^2(0,1)}^2\right)\right].
		\end{multline}		
		
		In order to establish our next inequality, first we recall that if a function $f$ is non-negative, then the function $t\mapsto \int_{t_0}^t f(t) $ is non-decreasing. As a consequence, for all $  t\in [T/2,3T/4]$, we have that
		\begin{align*}
		\int_0^{T/2}\left(\n{\sqrt{a}y_x}{L^2(0,1)}^2+\n{\sqrt{a}z_x}{L^2(0,1)}^2\right) & \leq \int_{0}^{t}\left(\n{\sqrt{a}y_x}{L^2(0,1)}^2+\n{\sqrt{a}z_x}{L^2(0,1)}^2\right)\\
		& 	\leq \int_{0}^{3T/4}\left(\n{\sqrt{a}y_x}{L^2(0,1)}^2+\n{\sqrt{a}z_x}{L^2(0,1)}^2\right).	\end{align*}
		Thus, integrating inequality \eqref{eq3.6-2} from $T/2$ to $3T/4$ and  using \eqref{eq3.6} we have
		\begin{multline}\label{eq3.7}
		\int_0^{3T/4}\left(\n{\sqrt{a}y_x}{L^2(0,1)}^2+\n{\sqrt{a}z_x}{L^2(0,1)}^2\right)\\ \leq C\left[\int_{T/2}^{3T/4}\left(\n{y}{L^2(0,1)}^2+\n{z}{L^2(0,1)}^2\right)+\int_0^{3T/4}\left(\n{F_1}{L^2(0,1)}^2+\n{F_2}{L^2(0,1)}^2\right)\right].
		\end{multline}
		
		Finally, we observe that $e^{2sA}(s\lambda \zeta)^n$ and $e^{2s\varphi}(s\lambda \sigma)^n$ are bounded in $[0,T/2]$ and $[T/4,3T/4]$ \\ respectively, for all $n\in \mathbb{Z}$. Hence, \eqref{eq3.6}, \eqref{eq3.7} and Carleman Inequality \eqref{carl} imply
		\begin{align*}
		\Gamma_1(s,y,z)& =\Ga{y}{z} \\
		& \leq C\left(\int_{T/4}^{3T/4}\int_{0}^{1}\left(|y|^2+|z|^2\right)+\int_{0}^{3T/4} \int_{0}^{1}\left(|F_1|^2+|F_2|^2\right)\right)\\
		& \leq C\left(\int_{T/4}^{3T/4}\int_{0}^{1}e^{2s\varphi}(s\lambda)^2\sigma^2\left(|y|^2+|z|^2\right) +\int_{0}^{3T/4}\int_{0}^{1}e^{2sA}(s\lambda)^4\sigma^4\left(|F_1|^2+|F_2|^2\right)\right)\\
		& 	\leq C\bigg(\intq e^{2sA}s^4\lambda^4\zeta^4(|F_1|^2+|F_2|^2)\   +\intw e^{2sA}s^8\lambda^8\zeta^8|y|^2\bigg),		
		\end{align*} 
		which concludes the proof.
	\end{proof}

	\begin{cor}\label{DO-2}
		There exist positive constants $C,\lambda_0$ and $s_0$ such that, for any $s\geq s_0$, $\lambda\geq \lambda_0$ and any  $y_{_T},z_{_T}\in L^2(0,1)$ the corresponding solution $(y,z)$ to \eqref{adj},  with $F_1\equiv F_2\equiv0$,  satisfies
		\begin{equation}\label{obs.ineq}
		\n{y(0)}{L^2(0,1)}^2+\n{z(0)}{L^2(0,1)}^2\leq C\intw e^{2sA}s^8\lambda^8\zeta^8|y|^2
		\end{equation}
	\end{cor}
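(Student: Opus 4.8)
The plan is to obtain \eqref{obs.ineq} by combining the Carleman estimate \eqref{carlA} of Proposition \ref{prop3.2} with a classical dissipation (energy) estimate for the backward system \eqref{adj}. Since $F_1\equiv F_2\equiv 0$, the right-hand side of \eqref{carlA} collapses to the single local term $C\intw e^{2sA}s^8\lambda^8\zeta^8|y|^2$; hence it suffices to show that the initial energy $\n{y(0)}{L^2(0,1)}^2+\n{z(0)}{L^2(0,1)}^2$ is controlled by the left-hand side $\Gamma(s,y,z)$.

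First I would set $\mathcal E(t):=\n{y(t)}{L^2(0,1)}^2+\n{z(t)}{L^2(0,1)}^2$ and derive an energy identity exactly as in \eqref{eq3.5}: multiplying the two equations of \eqref{adj} (with $F_1=F_2=0$) by $y$ and $z$, integrating over $(0,1)$, integrating by parts in $x$ (the boundary contributions vanish since $y,z$ satisfy homogeneous Dirichlet conditions), and adding, one gets
\[
-\tfrac12\mathcal E'(t)+\n{\sqrt a y_x}{L^2(0,1)}^2+\n{\sqrt a z_x}{L^2(0,1)}^2=-\int_0^1\left(b_{11}y^2+(b_{12}+b_{21})yz+b_{22}z^2\right).
\]
Using the boundedness of the $b_{ij}$ and discarding the non-negative gradient terms yields $-\mathcal E'(t)\leq 2C\mathcal E(t)$, so that $t\mapsto e^{2Ct}\mathcal E(t)$ is non-decreasing. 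Consequently $\mathcal E(0)\leq e^{2Ct}\mathcal E(t)$ for every $t\in[0,T]$, and integrating this in $t$ over $[T/4,3T/4]$ gives
\[
\n{y(0)}{L^2(0,1)}^2+\n{z(0)}{L^2(0,1)}^2=\mathcal E(0)\leq C\int_{T/4}^{3T/4}\left(\n{y}{L^2(0,1)}^2+\n{z}{L^2(0,1)}^2\right)dt.
\]

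It then remains to absorb this time-localized mass into the Carleman functional, which I would do exactly as in the final chain of Proposition \ref{prop3.2}. On the interval $[T/4,3T/4]$ the functions $A$ and $\zeta$ remain bounded, so the weight $e^{2sA}(s\lambda)^2\zeta^2$ is bounded below by a positive constant there; hence
\[
\int_{T/4}^{3T/4}\int_0^1\left(|y|^2+|z|^2\right)\leq C\int_{T/4}^{3T/4}\int_0^1 e^{2sA}(s\lambda)^2\zeta^2\left(|y|^2+|z|^2\right)\leq C\,\Gamma(s,y,z).
\]
Combining the last two displays gives $\mathcal E(0)\leq C\,\Gamma(s,y,z)$, and invoking Proposition \ref{prop3.2} with $F_1\equiv F_2\equiv0$ bounds $\Gamma(s,y,z)$ by $C\intw e^{2sA}s^8\lambda^8\zeta^8|y|^2$, which is precisely \eqref{obs.ineq}. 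The only point demanding care is the orientation of the Gronwall step: because \eqref{adj} is a backward problem with data prescribed at $t=T$, one must track the sign in the energy identity so as to control the \emph{initial} energy $\mathcal E(0)$ by the energy at later times $t\in[T/4,3T/4]$ rather than the reverse; this is exactly what the monotonicity of $e^{2Ct}\mathcal E(t)$ delivers. Everything else reduces to the boundedness of the coefficients and the elementary lower bound on the Carleman weight away from $t=0,T$.
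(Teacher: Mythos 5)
Your proposal is correct and follows essentially the same route as the paper: a backward Gronwall/energy estimate bounding $\mathcal E(0)$ by the time-integrated energy on an interior interval, a positive lower bound for the weight $e^{2sA}(s\lambda)^2\zeta^2$ away from $t=T$, and then the Carleman estimate of Proposition \ref{prop3.2} with $F_1\equiv F_2\equiv 0$. The only (immaterial) difference is that you integrate over $[T/4,3T/4]$ where the paper integrates over $[0,3T/4]$.
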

	
	\begin{proof}
		Using  standard energy inequalities for each equations in \eqref{adj}, we obtain
		\[-\frac{d}{2dt}\left(\n{y}{L^2(0,1)}^2+\n{z}{L^2(0,1)}^2\right)\leq 2C \left(\n{y}{L^2(0,1)}^2+\n{z}{L^2(0,1)}^2\right).\]
		Hence, we get that
		\begin{equation}\label{eq1}
		\n{y(0)}{L^2(0,1)}^2+\n{z(0)}{L^2(0,1)}^2\leq e^{4CT}\left(\n{y}{L^2(0,1)}^2+\n{z}{L^2(0,1)}^2\right).
		\end{equation}
		Finally, integrating the last inequality in $[0,3T/4]$,  recalling that $e^{2sA}(s\lambda)^2\zeta^2$ is bounded from below in $[0,3T/4]$ and using \eqref{carlA} with $F_1\equiv F_2\equiv 0$,  we obtain
		\begin{equation*}
		\n{y(0)}{L^2(0,1)}^2+\n{z(0)}{L^2(0,1)}^2 \leq C\int_{0}^{\frac{3T}{4}}\int_{0}^{1}e^{2sA}(s\lambda)^2 \zeta^2(|y|^2+|z|^2) \leq  C\intw e^{2sA} s^8\lambda^8\zeta^8|y|^2.
		\end{equation*}

	\end{proof}
	
	\section{Global null controllability for the linear system} \label{sec-NCLS}
	
	\noindent
	
	The goal of this section is to prove a  null controllability result for the linear system \eqref{pb-lin} and establish some important additional estimates.  In order to state this result, we need to define the weights functions:
	\[\rho=e^{-sA},\ \rho_0=e^{-sA}\zeta^{-1},\ \rhat=e^{-sA}\zeta^{-5/2} \text{ and } \rast=e^{-sA}\zeta^{-4}, \] 
	which satisfy $\rast\leq C\rhat\leq C\rho_0\leq C\rho$ and $\rhat^2=\rast\rho_0$.

	\begin{thm} \label{linearcontrol}
		If $u_0,v_0\in H_a^1(0,1)$ and the functions $g_1$ and $g_2$ fulfill 
		\[\intq \rho_0^2\left(|g_1|^2+|g_2|^2\right)<+\infty,\]
		then the system \eqref{pb-lin} is null-controllable. More precisely, there exists a control $h\in L^2(\domw)$ with associated state $(u,v)$ satisfying
		\begin{equation}\label{eq25}
		\intw\rast^2|h|^2<+\infty \text{ and } \intq \rho_0^2(|u|^2+|v|^2)<+\infty.
		\end{equation}
		In particular, $u(T,x)=v(T,x)=0,$ for all $x\in [0,1]$.
	\end{thm}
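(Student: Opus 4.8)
The plan is to prove Theorem \ref{linearcontrol} by the variational duality method of Fursikov and Imanuvilov, converting the Carleman estimate \eqref{carlA} into the coercivity of a bilinear form on adjoint states. Denote by $L^*(y,z):=(-y_t-(ay_x)_x+b_{11}y+b_{21}z,\,-z_t-(az_x)_x+b_{12}y+b_{22}z)$ the adjoint operator from \eqref{adj}, and let $P_0$ be the space of smooth pairs $(y,z)$ on $\overline{\dom}$ vanishing on $\{x=0\}$ and $\{x=1\}$. On $P_0$ I would introduce the symmetric bilinear form
\begin{equation*}
a\big((y,z),(\bar y,\bar z)\big):=\intq e^{2sA}s^4\lambda^4\zeta^4\,L^*(y,z)\cdot L^*(\bar y,\bar z)+\intw e^{2sA}s^8\lambda^8\zeta^8\,y\,\bar y,
\end{equation*}
together with the linear functional $\ell(y,z):=\intq (g_1y+g_2z)+\int_0^1u_0\,y(0)\,dx+\int_0^1v_0\,z(0)\,dx$. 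The point of these weights is that taking $F_1=L_1^*(y,z)$ and $F_2=L_2^*(y,z)$ in \eqref{carlA} yields $\Gamma(s,y,z)\le C\,a((y,z),(y,z))$; in particular $a((y,z),(y,z))$ controls $\intq e^{2sA}\zeta^2(|y|^2+|z|^2)=\intq\rho_0^{-2}(|y|^2+|z|^2)$ (up to the fixed factor $(s\lambda)^2$), and $a((y,z),(y,z))=0$ forces $\Gamma(s,y,z)=0$, hence $(y,z)=0$, so $a(\cdot,\cdot)^{1/2}$ is a genuine norm on $P_0$.

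Next I would verify that $\ell$ is continuous for this norm: Cauchy--Schwarz, the hypothesis $\intq\rho_0^2(|g_1|^2+|g_2|^2)<+\infty$ and the bound just quoted handle the source terms, while a source-dependent version of the observability inequality of Corollary \ref{DO-2} bounds $\n{y(0)}{L^2(0,1)}^2+\n{z(0)}{L^2(0,1)}^2$ by $a((y,z),(y,z))$ and thus controls the initial-data terms. Completing $P_0$ to a Hilbert space $P$ and applying the Lax--Milgram theorem then produces a unique $(\bar y,\bar z)\in P$ with $a((\bar y,\bar z),(y,z))=\ell(y,z)$ for all $(y,z)\in P$. Setting
\begin{equation*}
u:=e^{2sA}s^4\lambda^4\zeta^4\,L_1^*(\bar y,\bar z),\quad v:=e^{2sA}s^4\lambda^4\zeta^4\,L_2^*(\bar y,\bar z),\quad h:=-e^{2sA}s^8\lambda^8\zeta^8\,\bar y\,\chi_\omega,
\end{equation*}
the variational identity is precisely the weak formulation of \eqref{pb-lin} tested against adjoint solutions, so $(u,v)$ solves \eqref{pb-lin} with control $h$ and initial data $(u_0,v_0)$. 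A direct substitution gives $\intw\rast^2|h|^2\le C\,a((\bar y,\bar z),(\bar y,\bar z))<+\infty$, which is the control estimate in \eqref{eq25}.

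The principal obstacle is the single control force: $h$ acts only on the first equation, and the local observation in \eqref{carlA} involves only $y$, so the bilinear form never sees $z$ on $\omega$. Coercivity in the second component must therefore come through the coupling, which is exactly the mechanism already encoded in Propositions \ref{carl1} and \ref{prop3.2}, where the hypothesis $\inf_{\omega_1}b_{21}>0$ was used to dominate the local mass of $z$ by that of $y$. Consequently, once \eqref{carlA} is available the one-control difficulty is resolved at the level of the bilinear form, and no further idea is needed there.

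Finally, the construction above only delivers a state bound with the weaker weight $e^{-sA}\zeta^{-2}$ in place of $\rho_0$, so the state estimate in \eqref{eq25} still has to be upgraded. I would obtain $\intq\rho_0^2(|u|^2+|v|^2)<+\infty$ by a weighted energy argument on the state system \eqref{pb-lin}, multiplying the equations by suitable powers of the weights and invoking the well-posedness estimate \eqref{ineq1} of Proposition \ref{prop-WP-lin} together with the control and source bounds; these are the additional estimates announced for this section. Since $\rho_0=e^{-sA}\zeta^{-1}\to+\infty$ as $t\to T$, the finiteness of $\intq\rho_0^2(|u|^2+|v|^2)$ forces $u(\cdot,T)=v(\cdot,T)=0$, completing the proof.
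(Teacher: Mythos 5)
Your core construction is the classical Fursikov--Imanuvilov duality argument (coercive bilinear form on adjoint pairs, Lax--Milgram/Riesz, state and control read off from the minimizer), and this is a genuinely different route from the paper's. The paper instead minimizes, for each $n$, the penalized functional $J_n(u,v,h)=\frac12\intq\rho_{0,n}^2(|u|^2+|v|^2)+\frac12\intq\rho_{\ast,n}^2|h|^2$ over solutions of \eqref{pb-lin}, where $\rho_{0,n},\rho_{\ast,n}$ are truncated weights bounded above and below for fixed $n$; Lagrange's principle yields the optimality system \eqref{S2}, the Carleman estimate \eqref{carlA} and the observability inequality \eqref{obs.ineq} applied to $(p_n,q_n)$ give the uniform bound $J_n\leq C\sqrt{J_n}$, and $(u,v,h)$ is obtained as a weak limit. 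Your single variational problem is cleaner and avoids the truncation-and-limit machinery; both proofs rest on the same two inputs, namely \eqref{carlA} and a source-dependent observability estimate (your claim that Corollary \ref{DO-2} extends to $F_1,F_2\neq 0$ with the source weight $e^{2sA}s^4\lambda^4\zeta^4$ is correct, since $e^{2sA}\zeta^4$ is bounded from below on $[0,3T/4]$). Up to and including the control bound $\intw\rast^2|h|^2<+\infty$, your argument is sound, and you correctly observe that the one-control difficulty is entirely absorbed by \eqref{carlA}.

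The genuine gap is your final paragraph. As you note, the construction only yields $\intq e^{-2sA}\zeta^{-4}(|u|^2+|v|^2)<+\infty$, i.e.\ the state weight $e^{-sA}\zeta^{-2}$ instead of $\rho_0=e^{-sA}\zeta^{-1}$, but the proposed upgrade by a weighted energy argument cannot work. Multiplying \eqref{pb-lin} by $\rho_0^2u$ and $\rho_0^2v$ produces, after integration by parts in $x$, the identity
\begin{equation*}
\frac12\frac{d}{dt}\int_0^1\rho_0^2(|u|^2+|v|^2)+\int_0^1\rho_0^2a(|u_x|^2+|v_x|^2)
=\frac12\int_0^1(\rho_0^2)_t(|u|^2+|v|^2)+\cdots ,
\end{equation*}
and near $t=T$ one has $(\rho_0^2)_t=\rho_0^2\left(-2sA_t-2\zeta^{-1}\zeta_t\right)>0$ with $|A_t|$ of order $(T-t)^{-5}$. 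So this term sits on the right-hand side with a nonintegrable, unbounded coefficient: Gronwall would require $(\rho_0^2)_t\leq g(t)\rho_0^2$ with $g\in L^1(0,T)$, which fails, while absorbing it into the available bound would require $s|A_t|e^{-2sA}\zeta^{-2}\leq Ce^{-2sA}\zeta^{-4}$, i.e.\ $s|A_t|\leq C\zeta^{-2}$, which is grossly false. The well-posedness estimate \eqref{ineq1} cannot help either: it is unweighted and carries no information about decay as $t\to T^-$, and the gap to be bridged is exactly a factor $\zeta^{2}\sim(T-t)^{-8}$ there. Finally, the ``additional estimates announced for this section,'' Propositions \ref{ad1} and \ref{ad2}, go in the opposite direction: they \emph{assume} the $\rho_0$-weighted state bound of \eqref{eq25} and deduce weaker-weighted bounds on $u_x,u_t,(au_x)_x$, so invoking them here is circular. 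In this framework the state weight cannot be improved a posteriori; it must be built into the variational problem. In your setting that means replacing $e^{2sA}s^4\lambda^4\zeta^4$ by $e^{2sA}s^2\lambda^2\zeta^2$ in the first term of the bilinear form, but then Lax--Milgram requires a Carleman estimate with source weight $\zeta^2$, which \eqref{carlA} does not provide.

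A mitigating remark: the weight your construction does deliver is exactly the one the paper's own proof delivers. Since the paper takes $\rho_{0,n}=\rho_n\zeta^{-2}$ (this exponent is forced by the step $\rho^{-2}\rho_n^4\zeta^{-4}\leq\rho_{0,n}^2$), the uniform bound on $J_n$ gives in the limit precisely $\intq e^{-2sA}\zeta^{-4}(|u|^2+|v|^2)<+\infty$, and the identification in \eqref{wc} of the limiting weight with $\rho_0$ is consistent only if $\rho_0$ is read as $e^{-sA}\zeta^{-2}$ there. That weight still blows up as $t\to T^-$, so it suffices for $u(T,\cdot)=v(T,\cdot)=0$ and for the structure of the rest of the paper after adjusting exponents. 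So the honest conclusion is: your proof, stopped at the weight $e^{-sA}\zeta^{-2}$, proves what the paper's argument actually proves; neither your energy argument nor any soft post-processing reaches the weight $e^{-sA}\zeta^{-1}$ literally written in \eqref{eq25}, and you should simply delete the upgrade step rather than rely on it.
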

	
	\begin{rem}\label{rem1}
		\noindent
		
		\begin{itemize}
			\item[(a)] 
				Recalling that  $\rho_0(t)\to +\infty$, as $t\to T^{-}$, and $\rho_{0} (0) >0$ (since $m(0)>0$), the second relation in ($\ref{eq25}$) garantees $u(T,x)=v(T,x)=0$. 
			\item[(b)]  If we had chosen $m\in C^{\infty} ([0,T])$ satisfying $m(0)=0$, we would verify $\rho_0(t)\to +\infty$, as $t\to 0^{+}$. As a consequence, the second relation in ($\ref{eq25}$) would imply $u(0,x)=v(0,x)\equiv 0$. However, in general, this fact is not true, because $u_{0} \in H_{a}^{1} (0,1)$ and $v_{0} \in H_{a}^{1} (0,1)$ must be taken arbitrarily.  
		\end{itemize}
		
	\end{rem}
	
	\begin{proof}
	For  each $n\in \mathbb{N}^\ast$, we define
		\[A_n(t,x)=\frac{A(T-t)^4}{(T-t)^4+\frac{1}{n}},\ (t,x) \in  [0,T]\times [0,1].
		\]
		We also consider
		\[\rho_n=e^{-sA_n}, \ \rho_{0,n}=\rho_{n}\zeta^{-2} \text{ and } \rho_{\ast,n}=\rho_n\zeta^{-4} m_n, \text{ where }m_n=\begin{cases}
		1,&  x\in \omega,\\
		n,&  x\notin \omega.
		\end{cases}\]
		
		These weight functions are built in such a way that $\rho_{0,n}$ and $\rho_{\ast, n}$ are  bounded from below by a constant depending only on $T$ and from above by another one depending on $n$ and $T$, see Lemma \ref{lemma-const}. It will allows us to obtain a sequence $(u_n,v_n,h_n)_{n\in \mathbb{N}^\ast}$ of solutions to \eqref{pb-lin} which will converge to a solution of \eqref{pb-lin} satisfying \eqref{eq25}. 
		
		To do that,  for any functions $u,v,h\in L^2(\dom)$, let us  define the functional
		\[J_n(u,v,h)=\frac{1}{2}\intq \rho_{0,n}^2\left(|u|^2+|v|^2\right)+\frac{1}{2}\intq \rho_{\ast,n}^2|h|^2.\]
		
		Since each $J_n$ is lower semi-continuous, strictly convex and coercive (see Appendix \ref{Jn}), Proposition 1.2 in \cite{ekeland1999convex} yields  a unique $(u_n,v_n,h_n)$ that minimizes $J_n(u,v,h)$ subject to the condition $\mathcal{C}=\{(u,v,h)\in \mathbf{ \left[L^2(\dom)\right]^3}; (u,v,h)\text{ solves } \eqref{pb-lin}\}$. In this case, $(u_n,v_n,h_n)$ satisfies \eqref{pb-lin} and, by virtue of \textit{Lagrange's Principle}, there exist  functions $p_n,q_n$ solving the following system
		\begin{equation}\label{S2}
		\begin{cases}
		-{p_{n}}_t-({ap_{n}}_x)_x+b_{11}p_{n}+b_{21}q_{n}=-\rho_{0,n}^2u_n, & (t,x) \in \dom,\\
		-{q_{n}}_t-(a{q_n}_x)_x+b_{12}p_{n}+b_{22}q_{{n}}=-\rho_{0,n}^2v_n, & (t,x)\in \dom,\\
		p_n(t,0)=p_n(t,1)=q_n(t,0)=q_n(t,1)=0,& t\in (0,T),\\
		p_n(T,x)=q_n(T,x)=0, & x\in (0,1),\\
		p_n=\rho_{_{\ast,n}}^2h_n \chi_{\omega }, & \in \dom.
		\end{cases}
		\end{equation}
	
		We wan to prove that $(J_n(u_n,v_n,h_n))_{n=1}^\infty$ is a numerical bounded sequence. To do that, we will use Carleman  and   Observability inequalities to prove that  $J_{n} (u_{n},v_n ,h_{n} ) \leq C \sqrt{J_{n} (u_{n},v_n ,h_{n} )}$.
		
		In fact, multiplying the PDEs in (\ref{S2}) by $u_{n}$ and $v_n$, integrating over $\dom$ and  using integration by parts, we have
		\begin{align*}
		0&=
		\begin{multlined}[t][0.9\textwidth]
		\intq [-{p_{n}}_t - (a {p_{n}}_x )_{x} +b_{11} p_{n} +b_{21}q_n +\rho_{0,n}^{2} u_{n}] u_{n} \\
		+	\intq [-{q_{n}}_t - (a {q_{n}}_x )_{x} +b_{12} p_{n} +b_{22}q_n  +\rho_{0,n}^{2} v_{n}] v_{n}
		\end{multlined}\\
		&=\begin{multlined}[t][0.9\textwidth]
		\intq [{u_{n}}_t - (a {u_{n}}_x )_{x} +b_{11}u_{n}+b_{12}v_n ]p_{n} + \intq \rho_{0,n}^{2} |u_{n} |^{2} + \int_{0}^{1} p_{n} (0,x) u_{0} \\
		+\intq [{v_{n}}_t - (a {v_{n}}_x )_{x} +b_{21}u_{n}+b_{22}v_n ]q_{n} + \intq \rho_{0,n}^{2} |v_{n} |^{2} + \int_{0}^{1} q_{n} (0,x) v_{0} 
		\end{multlined}
		\end{align*}
		
		\begin{align*}
		&=\begin{multlined}[t][0.9\textwidth]
		\intq (h_{n} \chi _{\omega} +g_1) p_{n} + \intq \rho_{0,n}^{2} |u_{n} |^{2} + \int_{0}^{1} p_{n} (0,x) u_{0}\\
		+\intq g_2 q_{n} + \intq \rho_{0,n}^{2} |v_{n} |^{2} + \int_{0}^{1} q_{n} (0,x) v_{0}
		\end{multlined}
		\end{align*}
		Hence, since $p_n=p_{\ast,n}^2h_n\chi_{\omega }$, we obtain
		\begin{align*}
		\displaystyle J_{n} (u_{n}, v_n ,h_{n}) & =\frac{1}{2}\intq \rho_{0,n}^2\left(|u_n|^2+|v_n|^2\right)+\frac{1}{2}\intq \rho_{\ast,n}^2|h_n|^2\chi_{\omega } \\
		&\begin{multlined}[t][0.8\textwidth]
		=-\frac{1}{2} \intq (h_{n} \chi _{\omega} +g_1) p_{n} -\frac{1}{2} \int_{0}^{1} p_{n} (0,x) u_{0} \\
		-\frac{1}{2} \intq g_2q_{n} - \frac{1}{2} \int_{0}^{1} q_{n} (0,x)v_{0} + \frac{1}{2} \intq p_{n} h_{n}\chi_\omega 
		\end{multlined}\\
		& =-\frac{1}{2} \intq (g_1 p_{n} + g_2q_{n}) -\frac{1}{2} \int_{0}^{1} p_{n} (0,x) u_{0} - \frac{1}{2} \int_{0}^{1} q_{n} (0,x)v_{0}.
		\end{align*}
		Using  H\"older inequality, we have
		\begin{multline*}
		\displaystyle J_{n} (u_{n},v_n ,h_{n})		 \leq \frac{1}{2} \n{g_1\rho _{0}}{ L^2}\n{\rho _{0}^{-1} p_{n}}{L^2}+\frac{1}{2} \n{g_2\rho _{0}}{ L^2}\n{\rho _{0}^{-1} q_{n}}{L^2}\\
		+\frac{1}{2}\n{p_{n} (0,\cdot)}{L^2(0,1)}\n{u_{0}}{L^2(0,1)} +\frac{1}{2}\n{q_{n} (0,\cdot)}{L^2(0,1)}\n{v_{0}}{L^2(0,1)}.
		\end{multline*}

		Applying the classical Cauchy-Schwartz inequality $\Sigma_{i=1}^4(a_ib_i)\leq (\Sigma_{i=1}^4a_i^2)^{1/2}(\Sigma_{i=1}^4b_i^2)^{1/2}$ and then using hypothesis $\displaystyle \intq \rho _{0}^{2}( |g_1|^{2}+|g_2|^{2}) <+\infty$ , we get
		\begin{align*}
		\displaystyle J_{n} (u_{n},v_n ,h_{n})	 &\begin{multlined}[t][0.9\textwidth]
		\leq \frac{1}{2} \left(\n{g_1\rho _{0}}{ L^2}^2+\n{g_2\rho _{0}}{ L^2}^2 +\|u_{0}\|^2 +\|v_{0}\|^2\right)^{1/2}\\
		\cdot\left(\n{\rho _{0}^{-1} p_{n}}{L^2}^2 +\n{\rho _{0}^{-1} q_{n}}{L^2}^2 + \n{p_{n} (0,\cdot)}{L^2(0,1)}^2+ \n{q_{n} (0,\cdot)}{L^2(0,1)}^2 \right)^{1/2}
		\end{multlined} \\
		&\leq C \left( \intq \rho _{0}^{-2} (p_{n}^{2}+ q_{n}^{2})  +\n{p_{n} (0,\cdot)}{L^2(0,1)}^2+ \n{q_{n} (0,\cdot)}{L^2(0,1)}^2 \right) ^{1/2}.
		\end{align*}
		
		Now, it is enough to prove that each term in the last inequality is  bounded by $J(u_n,v_n,h_n)$. In order to estimate the first term, we will apply Carleman inequality \eqref{carlA}  to the solution $(p_n,q_n)$ to \eqref{S2} and then use that  $\rho^{-2}\rho_n^4 \leq \rho_n^{2}$  and  $p_n=\rho_{\ast,n}^2h_n\chi_{\omega }$ in $\dom$. 
		
		Indeed, 
		\begin{align*}
		\displaystyle \intq \rho _{0}^{-2} (p_{n}^{2}+q_n^2)
		&=\frac{1}{(s\lambda)^2} \intq e^{2s A}(s\lambda)^2\zeta^2(p_{n}^{2}+q_n^2)\\
		&\leq C\left( \intq e^{2s A}\zeta^4(s\lambda)^4\rho_{0,n}^{4}(  |u_{n} |^{2}+|v_n|^2) + \intw  e^{2s  A}\zeta^8(s\lambda)^8 |p_{n} |^{2} \right) \\
		&\leq C \left( \intq \rho^{-2} \rho_{n}^{4}\zeta^{-4}( |u_{n} |^{2}+|v_{n} |^{2}) + \intw \rho^{-2}\zeta^8 \rho _{\ast ,n}^{4} h_{n}^{2} \right) \\
		&\leq C \left( \intq \rho_{0,n}^{2} ( |u_{n} |^{2}+|v_{n} |^{2}) + \intw \rho _{\ast ,n}^{2} |h_{n} |^{2} \right) \\
		&=C J_{n} (u_{n}, v_n ,h_{n}). 
		\end{align*} 
		
		The remain terms are readily estimated  from the Observability Inequality \eqref{obs.ineq}. In fact,
		\begin{align*}
		\n{p_{n} (0,\cdot)}{L^2(0,1)}^2+ \n{q_{n} (0,\cdot)}{L^2(0,1)}^2 & \leq C\left( \intw  e^{2s  A}\zeta^8(s\lambda)^8 |p_{n} |^{2} \right)\leq C J_{n} (u_{n}, v_n ,h_{n}).
		\end{align*}
		
		Hence, we have proven that $J_{n} (u_{n},v_n ,h_{n} ) \leq C \sqrt{J_{n} (u_{n},v_n ,h_{n} )}$.  As a consequence, $(J_n(u_n,v_n,h_n))_{n\in \mathbb{N}}$ is a bounded sequence. Since $\rho_{0,n}^2\geq C_T$ and $\rho_{\ast,n}^2\geq C_Tm_n$, we deduce that
		\[\n{u_n}{L^2}^2+\n{v_n}{L^2}^2+\intw |h_n|^2 +n\int_0^T\int_{[0,1]\setminus\omega}|h_n|^2\leq C J_{n} (u_{n},v_n ,h_{n} )\leq  C.
		\]

		It implies that   there exist $u,v\in L^2(\dom)$ and $h\in L^2(\domw)$  such that
		\[u_n\rightharpoonup u, \ \ v_n\rightharpoonup v \ \mbox{ in } L^2(\dom) \ \mbox{ and } \ \ h_n\chi_{\omega }\rightharpoonup h\ \ \mbox{ in } L^2(\domw),\] 
		up to subsequences.
		From this, we take
		\begin{equation}\label{wc}
		\rho_{0,n}u_n\rightharpoonup \rho_0 u \ \ \rho_{0,n}v_n\rightharpoonup \rho_0 v \ \ \mbox{ and } \ \ \rho_{\ast,n}h_n\chi_{\omega }\rightharpoonup \rho_{\ast}h\chi_{\omega } \ \ \mbox{ in } L^2(\dom).
		\end{equation} 
		
		Consequently, passing to limits as $n\to +\infty$, we conclude that $(u,v,h)$ solves \eqref{pb-lin}.		Furthermore, (\ref{eq25}) follows from  (\ref{wc}) and this establishes the result. 
			\end{proof}
	
	%
	The next step is to prove two crucial estimates which will needed later.
	
	\begin{prop}\label{ad1}
		Assume the same hypothesis of Theorem $\ref{linearcontrol}$. Then
		
		\begin{multline}
		\intq \rhat ^2 a(|u_x |^2 + |v_x |^2 ) 
		\leq C \left( \intq \rho_{0}^{2} (|u|^2 + |v|^2 ) + \intw  \rho_{\ast}^{2} |h|^2 \right) \\   
		+C\left( \intq \rho _{0}^{2} (|g_1 |^2 + |g_2 |^2 ) + \| u_0 \| _{H_{a}^{1}}^{2} + \| v_0 \| _{H_{a}^{1}}^{2} \right)  \ .
		\end{multline} 
	\end{prop}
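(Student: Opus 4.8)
The plan is to derive the weighted energy estimate by testing each equation of \eqref{pb-lin} against $\rhat^2$ times its own unknown. Concretely, I would multiply the first equation by $\rhat^2 u$ and the second by $\rhat^2 v$, integrate over $\dom$, and integrate by parts in $x$. Since $a(0)=0$ and $u(t,1)=v(t,1)=u(t,0)=v(t,0)=0$, the boundary contribution $(au_x)\rhat^2 u$ vanishes at $x=1$ (where $u=0$) and at $x=0$ (where $u=0$ while $au_x$ stays finite), so the second-order terms yield the good quantity $\intq\rhat^2 a(u_x^2+v_x^2)$ together with a cross term $\intq a(\rhat^2)_x(u_x u+v_x v)$. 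The time derivatives produce boundary contributions at $t=0,T$ and an interior term $\tfrac12\intq(\rhat^2)_t(u^2+v^2)$, while the coupling coefficients $b_{ij}$ and the right-hand sides $h\chi_\omega+g_1$, $g_2$ give lower-order terms. After rearranging, the target is to show that every term other than the good one is dominated by the right-hand side of the claimed inequality.

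For rigour I would first carry out this computation on $[0,T']$ with $T'<T$ (or on the approximating sequence $(u_n,v_n,h_n)$ of Theorem \ref{linearcontrol}), so that the boundary term at $t=T'$ appears on the right-hand side as $-\tfrac12\int_0^1\rhat^2 u^2\,dx\big|_{t=T'}\le0$ and is simply discarded; letting $T'\to T$ and invoking monotone convergence recovers $\intq\rhat^2 a(u_x^2+v_x^2)$ on the left. At $t=0$ the weight is harmless: since $m(0)>0$ we have $\rhat^2(0,\cdot)\le C$, whence $\tfrac12\int_0^1\rhat^2(0,\cdot)(u_0^2+v_0^2)\le C(\n{u_0}{H_a^1}^2+\n{v_0}{H_a^1}^2)$.

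The heart of the matter is the two weight-dependent terms. Writing $\partial_t\log\rhat^2=-2sA_t-5\zeta_t/\zeta$ and using the bounds $|A_t|\le C\zeta^2$, $|\zeta_t|\le C\zeta^2$ together with $\zeta\ge c_0>0$, one gets $|(\rhat^2)_t|\le Cs\zeta^2\rhat^2=Cs\,e^{-2sA}\zeta^{-3}\le Cs\,\rho_0^2$, so $\tfrac12\intq|(\rhat^2)_t|(u^2+v^2)\le C\intq\rho_0^2(u^2+v^2)$. For the cross term I would use Young's inequality, $|a(\rhat^2)_x u_x u|\le\tfrac12\rhat^2 a u_x^2+\tfrac12 a\frac{((\rhat^2)_x)^2}{\rhat^2}u^2$, absorbing the first piece into the good term. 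The decisive point is the degenerate factor: since $\partial_x\log\rhat^2=-\lambda\psi_x(2s\zeta+5)$ and $a\psi_x^2=x^2/a$ stays bounded on $[0,1]$ by Assumption \ref{hyp_a} and the definition \eqref{functions1} of $\psi$, we obtain $a\frac{((\rhat^2)_x)^2}{\rhat^2}=\lambda^2(a\psi_x^2)(2s\zeta+5)^2\rhat^2\le Cs^2\lambda^2\zeta^2\rhat^2\le Cs^2\lambda^2\rho_0^2$. I regard this boundedness of $a\psi_x^2$ near the degeneracy $x=0$ as the main obstacle, and it is precisely where the structural hypothesis on $a$ enters.

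Finally, the lower-order and source terms are routine. Since the $b_{ij}$ are bounded and $\rhat^2\le C\rho_0^2$ (because $\zeta^{-3}\le C$), the coupling terms are controlled by $\intq\rho_0^2(u^2+v^2)$. For the forcing I would exploit the algebraic identities $\rhat^2=\rast\rho_0$ and $\rast\le C\rho_0$: on $\omega$, $\rhat^2 h u=(\rast h)(\rho_0 u)\le\tfrac12\rast^2 h^2+\tfrac12\rho_0^2 u^2$, so $\intw\rhat^2 h u\le\tfrac12\intw\rast^2 h^2+\tfrac12\intq\rho_0^2 u^2$, and similarly $\rhat^2 g_i\,w=(\rho_0 g_i)(\rast w)\le\tfrac12\rho_0^2 g_i^2+C\rho_0^2 w^2$ for $w\in\{u,v\}$. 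Collecting all the estimates, fixing $s,\lambda$ at admissible values so that the factors $s^2\lambda^2$ are absorbed into $C$, and adding the $u$- and $v$-identities yields the asserted bound.
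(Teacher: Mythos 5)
Your proposal is correct and follows essentially the same route as the paper: testing the system with $\rhat^2 u$ and $\rhat^2 v$, absorbing the cross term $\int a(\rhat^2)_x(u_xu+v_xv)$ via Young's inequality, controlling $(\rhat^2)_t$ and $(\rhat^2)_x$ through the identities $A_x=\zeta_x=\lambda\zeta\psi_x$ and $|\zeta_t|\le C\zeta^2$, and splitting the control and source terms with $\rhat^2=\rho_\ast\rho_0$ and $\rho_\ast\le C\rho_0$. Your additional care with the time-boundary terms (truncating at $T'<T$, discarding the nonnegative term at $t=T'$, and using $m(0)>0$ at $t=0$) only makes explicit what the paper's closing ``integrating in time'' leaves implicit.
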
 
	
	\begin{proof}
		Let us multiply the first equation in (\ref{pb-lin}) by $\rhat ^2 u$ and the second one by $\rhat ^2 v$, and let us integrate over $[0,1]$. In this case, we obtain 
		\begin{multline}\label{eq2}
		\int_{0}^{1}\rhat ^2 [u_t u + v_t v ]-\int _{0}^{1} \rhat ^2 [(au_x) _x u + (av_x )_x v]\\
		= - \int_{0}^{1} \hat{\rho}^2[(b_{11} u + b_{12} v)u+ (b_{21} u + b_{22} v)v]  + \int _{0}^{1} \rhat ^2 h \chi _{\omega } u+ \int _{0}^{1} \rhat ^2 [g_1 u + g_2 v].
		\end{multline}
		Clearly, the terms in the right hand side of \eqref{eq2} can be estimated as follows:
		\begin{align*}
		& \displaystyle \left|  \int_{0}^{1} \hat{\rho}^2[(b_{11} u + b_{12} v)u+ (b_{21} u + b_{22} v)v]  \right| \leq C \int_{0}^{1} \rhat ^2 (|u|^2 + |v|^2)  ,\\
		&\displaystyle \int _{0}^{1} \rhat ^2 h \chi _{\omega } u   = \int _{\omega} (\rho _{0} u)(\rho _{\ast } h)   
		\leq \frac{1}{2} \int _{\omega} \rho _{0}^{2} |u|^2   
		+\frac{1}{2} \int _{\omega} \rho _{\ast }^{2} |h|^2  
		\end{align*}
		and
		\begin{multline*}
		\displaystyle \int _{0}^{1} \rhat ^2 [g_1 u + g_2 v] 
		\leq \frac{1}{2} \int _{0}^{1} \rhat ^2 (|g_1 |^2 + |u|^2 +|g_2 |^2 + |v|^2 ) \\ \leq C\left( \int_{0}^{1} \rho_{0}^{2} (|g_1 |^2 + |g_2 |^2)  + \int_{0}^{1} \rho_{0}^{2} (|u|^2 + |v|^2) \right) .
		\end{multline*}
		
		Now, let us deal with the left hand side of \eqref{eq2}. Notice that
		\begin{align*}
		\displaystyle \int _{0}^{1} \rhat ^2 [u_t u + v_t v ]  
		&=\frac{1}{2} \frac{d}{dt} \int_{0}^{1} \rhat ^2 (|u|^2 + |v|^2) 
		-\int_{0}^{1} \rhat (\rhat )_t  (|u|^2 + |v|^2 )  \\
		&:= \frac{1}{2} \frac{d}{dt} \int_{0}^{1} \rhat ^2 (|u|^2 + |v|^2) 
		-\mathcal{I}
		\end{align*}
		and	
		\begin{multline*}
		-\int _{0}^{1} \rhat ^2 [(au_x) _x u + (av_x )_x v] 
		=\int_{0}^{1} [(\rhat ^2 u)_x (au_x ) + (\rhat v )_x (av_x )]  \\
		= 2\int_{0}^{1} \rhat \rhat _x a (uu_x + vv_x )  + \int_{0}^{1} \rhat ^2 a (u_{x}^{2} + v_{x}^{2 })   := \mathcal J + \int_{0}^{1} \rhat ^2 a (u_{x}^{2} + v_{x}^{2 }).
		\end{multline*}
		
		Summing up, we have just checked that
		\begin{multline}\label{eq3}
		\frac{1}{2} \frac{d}{dt} \int_{0}^{1} \rhat ^2 (|u|^2 + |v|^2) 
		+ \int_{0}^{1} \rhat ^2 a (|u_{x}|^{2} + |v_{x}|^{2 })\\ \leq C\left(   \int_{0}^{1} \rho_{0}^{2} (|u|^2 + |v|^2)+\int _{\omega} \rho _{\ast }^{2} |h|^2+\int_{0}^{1} \rho_{0}^{2} (|g_1 |^2 + |g_2 |^2)  \right) +|\mathcal{I}|+|\mathcal J|
		\end{multline}

		Next, we will estimate $ \mathcal I$. Firstly, we put 
		\[A(t,x)=\tau(t)(e^{\lambda(|\psi|_\infty+\psi)}-e^{3\lambda|\psi|_\infty}):= \zeta (t,x) \bar{\eta } (x),\]
		where $\displaystyle \bar{\eta} (x):= \frac{e^{\lambda (|\psi |_{\infty} + \psi )} - e^{3\lambda |\psi |_{\infty }}}{\eta (x)} $ is a bounded function on $[0,1]$. Secondly, we observe that 
		\begin{align*}
		\displaystyle \rhat (\rhat )_t
		&=e^{-sA} \zeta ^{-5/2} \left[ -se^{-sA} A_t \zeta ^{-5/2} + e^{-sA} \left( \frac{-5}{2} \right)  \zeta ^{-7/2} \zeta _t \right]  \\
		&=-se^{-2sA} \bar{\eta } (x) \zeta ^{-5} \zeta _t - \frac{5}{2} e^{-2sA} \zeta ^{-6} \zeta _{t} \\
		&=e^{-2sA} \zeta ^{-2} \zeta _t \left[ -s\zeta ^{-3} \bar{\eta} (x) - \frac{5}{2} \zeta ^{-4} \right] \\
		&=\rho_{0}^{2} \zeta _t \left[ -s\zeta ^{-3} \bar{\eta} (x) - \frac{5}{2} \zeta ^{-4} \right] 
		\end{align*}
		and, for any $t\in [0,T)$, we have
		\begin{align*}
		\displaystyle |\rhat (\rhat )_t |
		&\leq C \rho_{0}^{2} \tau ^2 \left|s\zeta ^{-3} \bar{\eta } (x) - \frac{5}{2} \zeta ^{-4} \right| 
		= C \rho_{0}^{2} \zeta ^2 \frac{\left|s\zeta ^{-3} \bar{\eta } (x) - \frac{5}{2} \zeta ^{-4} \right| }{\eta ^2 } \\
		&= C \rho_{0}^{2} \frac{\left|s\zeta ^{-1} \bar{\eta } (x) - \frac{5}{2} \zeta ^{-2} \right| }{\eta ^2 } 
		\leq C\rho_{0}^{2} .
		\end{align*}	
		From this, we obtain 
		\[|\mathcal I| \leq C\int_{0}^{1} \rho_{0}^{2} (|u|^2 + |v|^2 ).\]
		
		Now, in order to deal with $\displaystyle \mathcal J $, we consider the estimate
		\begin{equation*}
		\displaystyle |\mathcal J|
		\leq \frac{1}{2} \int_{0}^{1} [\rhat ^2 a (|u_x |^2 + |v_x |^2 ) + \rhat _{x}^{2} a(|u|^2 +|v|^2 )] := \frac{1}{2} \int_{0}^{1} \rhat ^2 a (|u_x |^2 + |v_x |^2 )   + \tilde{\mathcal J} 
		\end{equation*} 
		and recall  that $A_x = \zeta _x = \lambda \zeta \psi _x $ in $[0,T]\times [0,1]$. Hence, 
		\begin{align*}
		\displaystyle \rhat_{x}^{2} 
		&=\left( -e^{-sA} sA_x  \zeta ^{-5/2} -\frac{5}{2} e^{-sA} \zeta ^{-7/2} \zeta_{x} \right) ^2 
		\leq C \left( e^{-2sA} s^2 A_{x}^{2}  \zeta ^{-5} +\frac{25}{4} e^{-2sA} \zeta ^{-7} \zeta_{x}^{2} \right) \\
		&=Ce^{-2sA} \zeta ^{-2} \psi _{x}^{2} \left( s^2 \lambda ^2 \zeta ^{-1} + \frac{25}{4} \lambda ^2 \zeta ^{-3} \right) 
		=C\rho _{0}^{2} \psi _{x}^{2} \left( s^2 \lambda ^2 \zeta ^{-1} + \frac{25}{4} \lambda ^2 \zeta ^{-3} \right) \\
		&\leq C\rho_{0}^{2}
		\end{align*}
		and we get $\displaystyle \tilde{\mathcal J} \leq C\int_{0}^{1} \rho_{0}^{2} a (|u|^2 + |v|^2 ) $.
		
		Recalling inequality \eqref{eq3}, we  conclude that
		\begin{multline*}
		\displaystyle  \frac{1}{2} \frac{d}{dt} \int_{0}^{1} \rhat ^{2} (|u|^2 + |v|^2 )   + \frac{1}{2} \int_{0}^{1} \rhat ^2 a (|u_x |^2 + |v_x |^2 )   \\ 
		\leq C\left( \int_{0}^{1} \rho_{0}^{2} (|u|^2 + |v|^2 )    + \int_{\omega} \rho_{\ast }^2 |h|^2    + \int_{0}^{1} \rho_{0}^{2} (|g_1 |^2 + |g_2 |^2 )  \right) 
		\end{multline*}
		and, integrating in time, we obtain the desired result. 
	\end{proof}
	
	\begin{prop} \label{ad2}
		Assume the hyphotesis of Theorem $\ref{linearcontrol}$ and suppose that $h$ and $(u,v)$ satisfy ($\ref{eq25}$). Then 
		\begin{multline*}
		\displaystyle \intq \rho_{\ast}^{2} (|u_t |^2 + |v_t |^2 + |(au_x )_x |^2 +|(av_x )_x |^2 )
		\leq C\intq \rho_{0}^{2} (|u|^2 + |v|^2 ) dt \\
		+C \left( \intw \rho_{\ast }^{2} |h|^2 + \intq \rho_{0}^{2} (|g_1 |^2 + |g_2 |^2 ) + \| u_0 \|_{H_{a}^{1}}^{2} + \| v_0 \|_{H_{a}^{1}}^{2} \right) .  
		\end{multline*}   
	\end{prop}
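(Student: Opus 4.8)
The plan is to remove the time-degenerate weight by a change of unknown and then invoke the strong well-posedness estimate \eqref{ineq1}. Precisely, I would set $w:=\rast u$ and $W:=\rast v$ and check that $(w,W)$ solves a system of the form \eqref{pb-lin1},
\[
\begin{cases}
w_t-(aw_x)_x+b_{11}w+b_{12}W=\mathcal G_1,\\
W_t-(aW_x)_x+b_{21}w+b_{22}W=\mathcal G_2,
\end{cases}
\]
with homogeneous boundary data, initial data $(w(0),W(0))=(\rast(0,\cdot)u_0,\rast(0,\cdot)v_0)$, and sources
\[
\mathcal G_1=(\rast)_t\,u-(a(\rast)_x u)_x-a(\rast)_x u_x+\rast(h\chi_\omega+g_1),
\]
and the analogous $\mathcal G_2$ (with $v,g_2$ and no control term). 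Since $u_0,v_0\in H_a^1$, applying Proposition \ref{prop-WP-lin} to $(w,W)$ yields, through \eqref{ineq1}, a bound on $\intq(|w_t|^2+|W_t|^2)$ by $\n{w(0)}{H_a^1}^2+\n{W(0)}{H_a^1}^2+\n{\mathcal G_1}{L^2(\dom)}+\n{\mathcal G_2}{L^2(\dom)}$. Writing $\rast u_t=w_t-(\rast)_t u$ then reduces the $|u_t|^2,|v_t|^2$ part of the claim to estimating these right-hand quantities, and once the time-derivative terms are controlled the remaining terms $\rast^2|(au_x)_x|^2,\rast^2|(av_x)_x|^2$ follow \emph{immediately} by isolating $(au_x)_x,(av_x)_x$ from the equations in \eqref{pb-lin} and using $\rast\le C\rho_0$.

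The core of the argument is therefore to verify that each piece of $\mathcal G_1,\mathcal G_2$ and of the initial data is dominated by the right-hand side of the statement. For this I would first record the pointwise bounds on the weight: using $A_x=\zeta_x=\lambda\zeta\psi_x$ with $\psi_x,\psi_{xx}$ bounded, together with $|A_t|\le C\zeta^2$ and $|\zeta_t|\le C\zeta^2$ (the analogues, for $\tau,\zeta,A$, of the relations $|\varphi_t|,|\sigma_t|\le C\sigma^2$ used in Section \ref{sec-carl}, and near $t=0$ everything is bounded because $m(0)>0$), one obtains $|(\rast)_t|\le C\rho_0$, $|(\rast)_x|\le Cs\lambda e^{-sA}\zeta^{-3}$ and $|(\rast)_{xx}|\le C e^{-sA}\zeta^{-2}$, so the constants here may depend on the (now fixed) parameters $s,\lambda$. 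Consequently $\int|(\rast)_t u|^2\le C\intq\rho_0^2|u|^2$, and $\intw\rast^2|h|^2$, $\intq\rast^2|g_1|^2$ are directly of the required form (the latter since $\rast\le C\rho_0$). The initial term is handled by noting that $\rast(0,\cdot)$ is a bounded smooth function with $\sqrt a\,(\rast(0,\cdot))_x$ bounded, whence $\n{w(0)}{H_a^1}\le C\n{u_0}{H_a^1}$; here the hypothesis $m(0)>0$ is essential, since it keeps $\rast(0,\cdot)$ finite.

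The genuinely delicate terms are those from the divergence expression, which I would split as $(a(\rast)_x u)_x=(a(\rast)_x)_x\,u+a(\rast)_x u_x$. In the coefficient $(a(\rast)_x)_x=a'(\rast)_x+a(\rast)_{xx}$, the piece $a(\rast)_{xx}$ is bounded by $C\rho_0$ as above, while $a'(\rast)_x$ is the term forcing us to use the structural assumption \eqref{prop_a}: since $(\rast)_x=-\rast\lambda\psi_x(s\zeta+4)$ and $\psi_x=\pm x/a$ on $[0,\alpha')\cup[\beta',1]$ (on the central region $a$ is bounded away from $0$, so $a'$ is bounded), one finds $a'(\rast)_x=-\rast\lambda(s\zeta+4)\,(x a'/a)$, and $xa'/a\le K<1$ gives $|a'(\rast)_x|\le C\rho_0$; hence $\int|(a(\rast)_x)_x\,u|^2\le C\intq\rho_0^2|u|^2$. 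The last piece, $a(\rast)_x u_x$ (and the separate term $a(\rast)_x u_x$ already present in $\mathcal G_1$), is where Proposition \ref{ad1} enters: from $|a(\rast)_x|\le Cs\lambda\,a\,e^{-sA}\zeta^{-3}$ and $\rhat^2=\rast\rho_0$ one checks $a^2(\rast)_x^2\le C\,\rhat^2 a$, so that $\int a^2(\rast)_x^2|u_x|^2\le C\intq\rhat^2 a(|u_x|^2+|v_x|^2)$, which Proposition \ref{ad1} bounds by precisely the right-hand side of the present statement. I expect this interplay — taming the degeneracy of $a'$ near $x=0$ via \eqref{prop_a} and absorbing the weighted first-order term through the gradient estimate of Proposition \ref{ad1} — to be the main obstacle; the rest reduces to routine Cauchy--Schwarz together with the ordering $\rast\le C\rhat\le C\rho_0\le C\rho$.
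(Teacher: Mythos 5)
Your proposal is correct in substance but takes a genuinely different route from the paper. The paper never changes unknowns: it multiplies the equations of \eqref{pb-lin} directly by $\rast^2 u_t$ and $\rast^2 v_t$ (and afterwards by $-\rast^2(au_x)_x$ and $-\rast^2(av_x)_x$), integrates by parts, and absorbs the commutator terms coming from $(\rast^2)_t$ and $(\rast^2)_x$ via pointwise weight bounds plus Proposition \ref{ad1}. You instead conjugate the weight into the unknown, $w=\rast u$, $W=\rast v$, and invoke the regularity estimate \eqref{ineq1} of Proposition \ref{prop-WP-lin} for the transformed system, then undo the conjugation; your recovery of the $(au_x)_x$, $(av_x)_x$ terms directly from the equations is in fact slightly leaner than the paper's second energy estimate. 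Both arguments rest on the same two pillars --- Proposition \ref{ad1} to absorb $\intq\rhat^2a\left(|u_x|^2+|v_x|^2\right)$, and bounds on the derivatives of the weight in which \eqref{prop_a} enters through the boundedness of $x/a$ --- so the analytic content is equivalent. What your route buys is that the integration by parts and the $\varepsilon$-absorption of $\int\rast^2|u_t|^2$ are outsourced to Proposition \ref{prop-WP-lin}; what it costs is the need to identify $(\rast u,\rast v)$ with the unique weak solution of the transformed problem before \eqref{ineq1} can be applied. Since $\rast$ blows up as $t\to T^-$, the product-rule computation defining $\mathcal{G}_1,\mathcal{G}_2$ is literally valid only on $[0,T-\epsilon]$; one should apply Proposition \ref{prop-WP-lin} and uniqueness on each $[0,T-\epsilon]$ and pass to the limit $\epsilon\to 0$. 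This is routine, but your write-up skips it.

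One of your recorded estimates is false, though harmlessly so: $\psi_{xx}$ is \emph{not} bounded, and consequently the claim $|(\rast)_{xx}|\le Ce^{-sA}\zeta^{-2}$ fails near $x=0$. Indeed, on $[0,\alpha')$ one has $\psi_{xx}=\frac{1}{a}\left(1-\frac{xa'}{a}\right)$, which blows up like $1/a$. What is true --- and all you actually need, since $(\rast)_{xx}$ only ever appears multiplied by $a$ --- is that $a\psi_{xx}=1-\frac{xa'}{a}\in[1-K,1]$ is bounded, by \eqref{prop_a}; this yields $|a(\rast)_{xx}|\le C\rho_0$ by exactly the same mechanism you already use for $a'(\rast)_x$. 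With that one-line correction (and the analogous remark that the initial-data bound $\n{\rast(0,\cdot)u_0}{H_a^1}\le C\n{u_0}{H_a^1}$ also rests on the boundedness of $x/\sqrt{a}$, again a consequence of \eqref{prop_a}), your argument closes.
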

	
	\begin{proof}
		Multiplying the first equation in (\ref{pb-lin}) by $\rho_{\ast}^{2} u_t$ and the second one by $\rho_{\ast}^{2} v_t$, ,   we take
		\begin{multline*}
		\int_{0}^{1} \rho_{\ast}^{2} (|u_t|^2 + |v_t|^2 ) 
		-\int_{0}^{1} \rho_{\ast }^{2} [(au_x )_x u_t +(av_x )_x v_t ]  \\
		=-\int_{0}^{1} \rho_{\ast }^{2} [(b_{11} u + b_{12} v)u_t + (b_{21} u + b_{22} v )v_t ]+\int_{0}^{1} \rho_{\ast}^{2} h \chi_{\omega } u_t  +\int_{0}^{1} \rho_{\ast}^{2} (g_1 u_t + g_2 v_t ) .
		\end{multline*}
		Notice that, 
		\begin{align}\label{twice}
		\displaystyle -\int_{0}^{1} \rho_{\ast }^{2} [(au_x )_x u_t +(av_x )_x v_t ] 
		=&\frac{1}{2} \frac{d}{dt} \int_{0}^{1} \rho_{\ast}^{2} a (|u_x|^2 + |v_x|^2)\\ & -\frac{1}{2} \int_{0}^{1} (\rho_{\ast}^{2} )_t a (|u_x|^2 + |v_x|^2) 
		+\int_{0}^{1} (\rho_{\ast}^{2} )_x a(u_t u_x + v_t v_x ) \nonumber \\
		=:& \frac{1}{2} \frac{d}{dt} \int_{0}^{1} \rho_{\ast}^{2} a (|u_x|^2 + |v_x|^2) -\mathcal K , 
		\end{align}
		Proceeding as in the proof of Proposition \ref{ad1}, 
		\begin{multline}\label{eq4}
		\int_{0}^{1} \rho_{\ast}^{2} (|u_t|^2 + |v_t|^2 ) 
		+\frac{1}{2} \frac{d}{dt} \int_{0}^{1} \rho_{\ast}^{2} a (|u_x|^2 + |v_x|^2 )  \\
		\leq C\left(\int_{0}^{1} \rho_{\ast}^{2} (|u|^2 + |v|^2 )  
		+\int_{\omega} \rho_{\ast}^{2} |h|^2   
		+\int_{0}^{1} \rho_{\ast}^{2} (|g_1|^2 + |g_2|^2 ) \right)\\+\frac{3}{8} \int_{0}^{1} \rho_{\ast}^{2} (|u_t|^2 + |v_t|^2 )+|\mathcal{K}|.
		\end{multline}
		
		Using Young's inequality with $\varepsilon$, we have 
		\begin{align}\label{ca}
		\displaystyle \mathcal K
		&=\frac{1}{2} \int_{0}^{1} (\rho_{\ast}^{2} )_t a (|u_x|^2 + |v_x|^2) 
		-\int_{0}^{1} [((\rho_{\ast}^{2} )_x \rho_{\ast}^{-1} a u_x ) (\rho_{\ast} u_t ) + ((\rho_{\ast}^{2} )_x \rho_{\ast}^{-1} a v_x ) (\rho_{\ast} v_t )] \nonumber \\
		&\leq C \int_{0}^{1} [ |(\rho_{\ast}^{2} )_t | + |(\rho_{\ast}^{2})_x |^{2} \rho_{\ast}^{-2} ] a (|u_x|^2 + |v_x|^2) 
		+\frac{1}{8} \int_{0}^{1} \rho_{\ast}^{2} (|u_t|^2 + |v_t|^2 ).
		\end{align}
		Since $|\zeta_t |\leq C \zeta ^2$ and $\zeta_{x} = A_x = \lambda \zeta \psi _{x}$ in $[0,T)\times [0,1]$, we have
		\[\displaystyle |(\rho_{\ast}^{2} )_t|
		=\rhat ^2 |2sA_t \zeta ^{-3} + 8\zeta ^{-4} \zeta_t |
		\leq C\rhat^2 (2s\zeta^{-1} + 8\zeta^{-2} )\]
		and
		\begin{align*}
		\displaystyle |(\rho_{\ast}^{2} )_x |^2 \rho_{\ast }^{-2}
		&\leq C(4s^2 e^{-4sA} A_{x}^{2} \zeta ^{-16} + 64 e^{-4sA} \zeta ^{-18} \zeta_{x}^{2})\rho_{\ast }^{-2} \\
		&=C\rhat^2 (4s^2 A_{x}^{2} \zeta ^{-3} + 64 \zeta ^{-5} \zeta_{x}^{2}) \\
		&=C\rhat^2 (4s^2 \lambda ^2 \psi_{x}^{2} \zeta ^{-1} + 64 \zeta ^{-3}\lambda^2 \psi_{x}^{2} ),
		\end{align*}
		which imply
		\[\displaystyle \mathcal K \leq  C \int_{0}^{1} \rhat^2 a (|u_x|^2 + |v_x|^2) 
		+\frac{1}{8} \int_{0}^{1} \rho_{\ast}^{2} (|u_t|^2 + |v_t|^2 ).\]
		%
		Thus, from $\eqref{eq4}$,  applying Proposition $\ref{ad1}$ and using $\rho_{\ast } \leq C \rho_{0}$, we get
		\begin{multline}\label{part1}
		\intq \rho_{\ast}^{2} (|u_t |^2 + |v_t |^2 )
		\leq C\left(\intq \rho_{0}^{2} (|u|^2 + |v|^2 )  + \intw \rho_{\ast }^{2} |h|^2\right.\\ + \left.\intq \rho_{0}^{2} (|g_1 |^2 + |g_2 |^2 ) + \| u_0 \|_{H_{a}^{1}}^{2} + \| v_0 \|_{H_{a}^{1}}^{2} \right) .  
		\end{multline} 
		
		In order to conclude the proof, it remains to estimate  $\intq \rho_{\ast}^{2} (|(au_x )_x |^2 +|(av_x )_x |^2 )$. In fact, it is enough to multiply  the first equation  in ($\ref{pb-lin}$) by $-\rho_{\ast }^{2} (au_x)_x$ and the second one by  $-\rho_{\ast }^{2} (av_x)_x$, and proceed as in the first part of this proof.
	\end{proof}
	
	\section{Main Result } \label{sec-main}
	
	In this section, our goal is to prove Theorem \ref{main}.	Let us define the functions spaces
	\begin{multline*}
	\E:= \bigg\{  (u,v,h)\in\mathbf{ \left[L^2(\dom)\right]^2}\times L^2(\domw):\\
	u(t,\cdot),v(t,\cdot)  \text{ are absolutely continuous in } [0,1], \text{ a.e. in } [0,T], \\
	u_t, u_x, (au_x)_x, \rho_{\ast} h\in L^2 ((0,T)\times (0,1)),\ \   
	v_t, v_x, (av_x)_x \in L^2 ((0,T)\times (0,1)) ,\\
	\rho_{0} u, \rho_{0} [u_t - (au_x)_x -h\chi_{\omega}],
	\rho_{0} v, \rho_{0} [v_t - (av_x)_x ] \in L^2 ((0,T)\times (0,1)),\\ 
	u(t,1)\equiv v(t,1)\equiv u(t,0)\equiv v(t,0)\equiv 0 \text{ a.e in } [0,T], \emph{ and } 
	u(0,\cdot), v(0,\cdot)\in H_{a}^{1}\bigg\},
	\end{multline*}
	and $F:=G \times G \times H_{a}^{1} \times H_{a}^{1}$, where
	\[\displaystyle G := \left\lbrace  g\in L^2((0,T)\times (0,1)): \rho_{0} g \in L^2 ((0,T)\times (0,1))   \right\rbrace .\]
	
	We also consider the Hilbertian norm
	\begin{multline*}
	\n{(u,v,h)}{_E}^2:=
	\intq \rho_{0}^{2} (|u|^2 +|v|^2)
	+\intw \rho_{\ast}^{2} |h|^2 \\
	\hspace{0.5cm} +\intq \rho_{0}^{2} |u_t - (au_x)_x -h\chi_{\omega}|^2
	+\intq \rho_{0}^{2} |v_t - (av_x)_x |^2 \\
	\hspace{0.5cm}+\|u(0,\cdot)\|_{H_{a}^{1}}^{2}
	+\|v(0,\cdot)\|_{H_{a}^{1}}^{2} .
	\end{multline*}
	The proof that $E$ is a Hilbert space is given in Appendix \ref{hilbert}.

	Now, we set the mapping $H: E\to F$, given by
	\[H (u,v,h) = (H_{1} (u,v,h), H_{2} (u,v,h), u(\cdot ,0), v(\cdot ,0) ),\]
	where 
	\[\displaystyle H_{1} (u,v,h) = u_t-\left(\mu_1 \left(x,\int_0^1u \   \right)u_x \right)_x+f_1(t,x,u,v) -h\chi_\omega\]
	and
	\[\displaystyle H_{2} (u,v,h) = v_t-\left(\mu_2 \left(x,\int_0^1v \   \right)v_x \right)_x+f_2(t,x,u,v) . 
	\]
	Applying \textit{Lyusternik's Inverse Mapping Theorem}, see \cite{alekseev1987optimal}, we will prove that $H$ has a right inverse mapping defined in a small ball contained in  $F$.
	Due to the choice of the spaces $E$ and $F$, the existence of that inverse mapping will imply the local null controllability of ($\ref{pb1}$). Before doing it, we will establish some results which will guarantee that $H$ satisfies the hypotheses of Lyusternik's Theorem.
	
	\begin{lemma}\label{supremo}
		Define $\displaystyle \beta (x) =e^{\lambda (|\psi|_{\infty} +\psi)}-e^{3\lambda |\psi|_{\infty}} $ and $\bar{\beta} =\displaystyle \max_{x\in[0,1]} \beta(x)$. There exists $s>0$ such that, if $s\bar{\beta} <M<0$, then
		\[
		\displaystyle \sup_{t\in[0,T]} \left\lbrace e^{\frac{-2M}{m(t)}} \left[ \left( \int_{0}^{1} u \right)^2 +\left( \int_{0}^{1} v \right)^2 \right] \right\rbrace 
		\leq C\|(u,v,h)\|_{_E}^{2},\]
		for all $(u,v,h)\in E$.
	\end{lemma}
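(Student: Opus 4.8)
The plan is to reduce the claim to a weighted-in-time energy estimate for $\int_0^1 u^2$ and $\int_0^1 v^2$, whose engine is a pointwise comparison between the purely temporal weight $e^{-2M/m(t)}$ and the space-time weights $\rho_0^2$ and $\rast^2$; this comparison is precisely what the hypothesis $s\bar\beta<M<0$ buys. By the Cauchy--Schwarz inequality on $(0,1)$ we have $\left(\int_0^1 u\right)^2\le\int_0^1 u^2$, and likewise for $v$, so it suffices to bound
\[
G(t):=e^{-2M/m(t)}\left(\int_0^1 u^2\,dx+\int_0^1 v^2\,dx\right)
\]
uniformly in $t\in[0,T]$ by $C\n{(u,v,h)}{E}^2$.

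The technical heart, which I expect to be the main obstacle, is the following comparison: for each integer $k\ge0$ there are constants $C_k,C>0$ such that
\[
\frac{e^{-2M/m(t)}}{m(t)^k}\le C_k\,\rho_0^2(t,x)\qquad\text{and}\qquad e^{-2M/m(t)}\le C\,\rast^2(t,x),\qquad (t,x)\in\dom .
\]
Since $A=\beta(x)/m(t)$ and $\zeta=\eta(x)/m(t)$, one has $\rho_0^2=e^{-2s\beta(x)/m}m^2\eta^{-2}$, hence
\[
\frac{e^{-2M/m}}{m^k\,\rho_0^2}=\eta^2\,m^{-(k+2)}\,e^{2(s\beta(x)-M)/m}.
\]
Because $s\beta(x)\le s\bar\beta<M$, the exponent $2(s\beta(x)-M)/m$ tends to $-\infty$ as $m\to0^+$ uniformly in $x$, so the exponential decay dominates the polynomial blow-up $m^{-(k+2)}$; the right-hand side is continuous on $[0,T]\times[0,1]$ and vanishes as $t\to T$, hence bounded. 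The same computation with $\rast^2=e^{-2s\beta(x)/m}m^{8}\eta^{-8}$ gives the second inequality. This is exactly where $M>s\bar\beta$ is indispensable: it lets the weights absorb the arbitrarily negative powers of $m$ produced by differentiating $\rho_0,\rast$ in time.

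With this in hand I would run a weighted energy estimate. Since $(u,v,h)\in E$, the quantities $P:=u_t-(au_x)_x-h\chi_\omega$ and $Q:=v_t-(av_x)_x$ satisfy $\rho_0P,\rho_0Q\in L^2(\dom)$, and their weighted $L^2$-norms are part of $\n{(u,v,h)}{E}^2$. Writing $g(t)=e^{-2M/m(t)}$ and using that $G$ is absolutely continuous on $[0,T)$, I integrate
\[
G'(t)=g'(t)\int_0^1(u^2+v^2)+2g(t)\int_0^1(uu_t+vv_t)
\]
from $0$ to $t$. Substituting $u_t=P+(au_x)_x+h\chi_\omega$ and $v_t=Q+(av_x)_x$ and integrating by parts in $x$ --- the boundary terms vanish since $u(t,0)=u(t,1)=v(t,0)=v(t,1)=0$ --- produces the favorable terms $-2g\int_0^1 a(u_x^2+v_x^2)\le0$, which I discard. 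The remaining terms are handled by Young's inequality and the comparison above: $|g'|=g\,|2Mm'/m^2|\le Cg/m^2\le C\rho_0^2$ controls $\int_0^t g'\int(u^2+v^2)$; the terms $g\int uP$ and $g\int vQ$ are bounded using $g\le C\rho_0^2$ together with $\rho_0^2P^2,\rho_0^2Q^2$; and the control term $2g\int_\omega uh$ is split by Young and bounded using $g\le C\rho_0^2$ for the $u$-part and, crucially, $g\le C\rast^2$ for the $h$-part, so as to reproduce the norm term $\intw\rast^2|h|^2$.

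The initial contribution $G(0)=e^{-2M/m(0)}\big(\n{u(0)}{L^2(0,1)}^2+\n{v(0)}{L^2(0,1)}^2\big)$ is finite because $m(0)>0$, and is dominated by $\n{u(0)}{H_a^1}^2+\n{v(0)}{H_a^1}^2$, which also belongs to $\n{(u,v,h)}{E}^2$. Collecting all the bounds yields $G(t)\le C\n{(u,v,h)}{E}^2$ for every $t\in[0,T)$ with $C$ independent of $t$; taking the supremum gives the lemma. A minor point to check is the endpoint $t=T$: since $\rho_0u\in L^2(\dom)$ with $\rho_0\to\infty$ fast enough as $t\to T^-$ and $u\in C([0,T];L^2(0,1))$, one has $u(T)=v(T)=0$, so the uniform bound on $[0,T)$ persists to the closed interval.
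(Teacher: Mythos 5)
Your proposal is correct, and its technical heart coincides with the paper's: the pointwise comparison $e^{-2M/m(t)}\,m(t)^{-k}\le C_k\,\rho_{0}^{2}$ and $e^{-2M/m(t)}\le C\rho_{\ast}^{2}$, which is exactly where $s\bar{\beta}<M<0$ enters via $s\beta(x)-M\le s\bar{\beta}-M<0$ together with the fact that $e^{-c/m}$ beats any power of $m^{-1}$ (the paper phrases this as $e^{-2M/m}<e^{-2sA}e^{-k/m}\le Ce^{-2sA}\tau^{-8}\le C\rho_{\ast}^{2}$, using $e^{-k/m}\le 8!\,m^{8}/k^{8}$). Where you genuinely diverge is in how the supremum is extracted. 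The paper sets $q_{1}(t)=e^{-M/m(t)}\int_{0}^{1}u$, $q_{2}(t)=e^{-M/m(t)}\int_{0}^{1}v$, invokes Lemma 4.4 of \cite{jrl2016} to get $q_{1},q_{2}\in H^{1}(0,T)$ with $\|q_1\|_{H^1(0,T)}+\|q_2\|_{H^1(0,T)}\le C\|(u,v,h)\|_{E}^{2}$, and concludes by the embedding $H^{1}(0,T)\hookrightarrow C([0,T])$; that route forces one to bound $q_i'$, hence $e^{-M/m}\int_{0}^{1}u_t$, and in particular the trace term $\int_{0}^{1}(au_x)_x\,dx=[au_x]_{0}^{1}$, which ultimately rests on a maximal-regularity bound of the type $\intq \rho_{\ast}^{2}|(au_x)_x|^{2}<\infty$ (Proposition \ref{ad2} applied to elements of $E$). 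Your route---differentiate $G(t)=e^{-2M/m}\int_{0}^{1}(u^{2}+v^{2})$, substitute $u_t=P+(au_x)_x+h\chi_{\omega}$, and integrate by parts so that the second-order term becomes the dissipative $-2g\int_{0}^{1}a\,u_x^{2}\le 0$---sidesteps both issues: it uses only the quantities explicitly present in $\|\cdot\|_{E}$, Young's inequality, and the weight comparison, and your endpoint argument ($\rho_{0}u\in L^{2}$ plus $u\in C([0,T];L^{2}(0,1))$ forces $u(T)=v(T)=0$) replaces the continuity of $q_i$ up to $t=T$ that the paper gets from the $H^{1}$ embedding. So your proof is more self-contained (no appeal to \cite{jrl2016} or to Proposition \ref{ad2}), at the cost of being longer than the paper's reduction; it also repairs a slightly garbled step in the paper, whose printed requirement ``$2s(\bar{\beta}-\beta)>k$'' cannot hold at the maximizer of $\beta$---the quantity that actually does the work, as in your argument, is $k=2(M-s\bar{\beta})>0$.
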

	
	\begin{proof}
		In fact, for each $(u,v,h)\in E$, consider $q_1 : [0,T]\longrightarrow \R$ and $q_2 : [0,T]\longrightarrow \R$, given by
		\[
		\displaystyle q_1 (t) = e^{\frac{-M}{m(t)}} \int_{0}^{1} u(t,x) 
		\emph{  and  }
		q_2 (t) = e^{\frac{-M}{m(t)}} \int_{0}^{1} v(t,x) .
		\]
		Taking $k>0$, we quickly get $\displaystyle e^{\frac{-k}{m(t)}} \leq 8! [m(t)]^8/k^8$,
		for any $t\in [0,T]$. Since $A=\tau \beta$, taking $s>0$ such that $2s (\bar{\beta} - \beta) >k$, we have
		\[\displaystyle -\frac{2M}{m(t)} +2sA =-\frac{2M}{m(t)} +\frac{2s\beta}{m(t)} \leq \frac{-2s(\bar{\beta} -\beta )}{m(t)} <\frac{-k}{m(t)}\]
		and
		\[
		\displaystyle e^{\frac{-2M}{m(t)}} < e^{-2sA} e^{\frac{-k}{m(t)}} \leq Ce^{-2sA} \tau^{-8} \leq C\rho_{\ast}^{2} ,
		\]
		for any $t\in [0,T]$. From this point, we may argue as in \cite{jrl2016} (see Lemma 4.4, on page 533), in order to check that $q_1,q_2\in H^1(0,T)$ and 
		\[
		\|q_1\|_{H^1(0,T)} +\|q_2\|_{H^1(0,T)} 
		\leq C\|(u,v,h)\|_{E}^{2}.
		\]
		Therefore, the desired result is a consequence of the continuous embedding $ H^1(0,T)\hookrightarrow C(0,T)$.
	\end{proof} 
	
	As a consequence of Lemma $\ref{supremo}$, we deduce the useful result below:
	
	\begin{cor}\label{conseq}
		There exists $C>0$ such that
		
		\begin{equation*}
		\intq \rho_{0}^{2} \left( \int_{0}^{1} \bar{u} \right) ^2 |(au_x)_x|^2
		+\intq \rho_{0}^{2} \left( \int_{0}^{1} \bar{v} \right) ^2 |(av_x)_x|^2 \leq  C\|(u,v,h)\|_{_E}^{2}  \|(\bar{u},\bar{v},\bar{h})\|_{_E}^{2},
		\end{equation*}
		for any $(u,v,h),(\bar{u},\bar{v},\bar{h})\in E$.
	\end{cor}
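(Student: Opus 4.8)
The plan is to couple the pointwise-in-time bound furnished by Lemma \ref{supremo} with the weighted second-order estimate of Proposition \ref{ad2}, the bridge between them being a comparison of the weights $\rho_0$ and $\rho_\ast$. First I would apply Lemma \ref{supremo} to the triple $(\bar u,\bar v,\bar h)\in E$. Fixing $M$ with $s\bar{\beta}<M<0$ as in that lemma, this gives, for every $t\in[0,T]$,
\[
\left(\int_0^1\bar u\right)^2+\left(\int_0^1\bar v\right)^2\le C e^{\frac{2M}{m(t)}}\n{(\bar u,\bar v,\bar h)}{_E}^2 .
\]
Since $\int_0^1\bar u$ depends on $t$ only, it factors out of the spatial integral, so that
\[
\intq \rho_0^2\left(\int_0^1\bar u\right)^2|(au_x)_x|^2\le C\n{(\bar u,\bar v,\bar h)}{_E}^2\intq e^{\frac{2M}{m(t)}}\rho_0^2|(au_x)_x|^2 ,
\]
and likewise for the term carrying $\bar v$ and $(av_x)_x$. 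Everything is thus reduced to bounding $\intq e^{2M/m}\rho_0^2|(au_x)_x|^2$ by $\n{(u,v,h)}{_E}^2$.

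The crucial step is the weight comparison $e^{2M/m(t)}\rho_0^2\le C\rho_\ast^2$ on $[0,T]\times[0,1]$. Because $\rho_0^2=e^{-2sA}\zeta^{-2}$ and $\rho_\ast^2=e^{-2sA}\zeta^{-8}$, this is equivalent to $e^{2M/m}\le C\zeta^{-6}$; recalling that $\zeta^{-6}=m^6\eta^{-6}$ with $\eta$ bounded above and below by positive constants, it suffices to verify $e^{2M/m(t)}\le C\,m(t)^6$. This is precisely the exponential-beats-polynomial estimate already used in the proof of Lemma \ref{supremo} (there in the form $e^{-k/m}\le 8!\,m^8/k^8$): since $M<0$, the quantity $e^{2M/m}$ decays faster than any power of $m$ as $m\to 0^+$, i.e.\ as $t\to T^-$, while on the remainder of $[0,T]$ both sides stay bounded. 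Hence
\[
\intq e^{\frac{2M}{m}}\rho_0^2|(au_x)_x|^2\le C\intq \rho_\ast^2|(au_x)_x|^2 .
\]

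Finally I would invoke Proposition \ref{ad2} on the right-hand side. Any $(u,v,h)\in E$ may be viewed as a solution of \eqref{pb-lin} with sources $g_1:=u_t-(au_x)_x+b_{11}u+b_{12}v-h\chi_\omega$ and $g_2:=v_t-(av_x)_x+b_{21}u+b_{22}v$ and initial data $u(0,\cdot),v(0,\cdot)\in H_a^1$; since $b_{ij}\in L^\infty$, the definition of the $E$-norm yields $\intq\rho_0^2(|g_1|^2+|g_2|^2)\le C\n{(u,v,h)}{_E}^2$, and conditions \eqref{eq25} hold automatically because $\intq\rho_0^2(|u|^2+|v|^2)$ and $\intw\rho_\ast^2|h|^2$ are finite. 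The energy argument proving Proposition \ref{ad2} is source-agnostic, hence applies to this solution and gives
\[
\intq\rho_\ast^2|(au_x)_x|^2\le\intq\rho_\ast^2\left(|u_t|^2+|v_t|^2+|(au_x)_x|^2+|(av_x)_x|^2\right)\le C\n{(u,v,h)}{_E}^2 .
\]
Chaining the three displays, together with the symmetric estimate for $v$, gives the claim. I expect the weight comparison $e^{2M/m}\rho_0^2\le C\rho_\ast^2$ to be the main obstacle: one must exploit the strict negativity of $M$ to absorb the entire $\zeta^{-6}$ gap between $\rho_0$ and $\rho_\ast$, since the $E$-norm controls only $\rho_\ast(au_x)_x$ and not $\rho_0(au_x)_x$, and it is exactly the super-polynomial decay of $e^{2M/m}$ near $t=T$ that lets these two ends meet.
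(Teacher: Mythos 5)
Your proposal is correct and is essentially the paper's own argument: both proofs chain Lemma \ref{supremo} applied to $(\bar u,\bar v,\bar h)$, the weight comparison $\rho_0^2=\zeta^6\rho_\ast^2$ together with the exponential-beats-polynomial bound $\zeta^6\le Ce^{-2M/m}$ (stated in the paper as $\tau^6\le\frac{45}{4M^6}e^{-2M/m}$), and Proposition \ref{ad2} to control $\int_0^T\int_0^1\rho_\ast^2|(au_x)_x|^2$ by $C\|(u,v,h)\|_{E}^2$. Your explicit verification that a generic $(u,v,h)\in E$ falls under the hypotheses of Proposition \ref{ad2} --- viewing it as a solution of \eqref{pb-lin} with sources given by the residuals $g_1,g_2$, which the $E$-norm controls since $b_{ij}\in L^\infty$, and noting that \eqref{eq25} holds by definition of $E$ --- is left implicit in the paper but is exactly the intended justification.
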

	
	\begin{proof}
		Take $(u,v,h),(\bar{u},\bar{v},\bar{h})\in E$ and let $M<0$ be the constant mentioned in Lemma $\ref{supremo}$. Since $\rho_{0}^{2} \rho_{\ast}^{-2} = \zeta^6 $ and $\tau^6 \leq \frac{45}{4M^6} e^{\frac{-2M}{m}}$, applying Proposition $\ref{ad2}$ and Lemma $\ref{supremo}$, we obtain 
		
		\begin{align*}
		\displaystyle \intq \rho_{0}^{2} \left( \int_{0}^{1} \bar{u} \right) ^2 |(au_x)_x|^2
		&\leq \sup_{t\in[0,T]} \left\lbrace e^{\frac{-2M}{m}} \left(  \int_{0}^{1} \bar{u} \right) ^2 \right\rbrace 
		\intq \rho_{\ast}^{2} |(au_x)_x|^2 \\
		&\leq C\|(u,v,h)\|_{E}^{2}  \|(\bar{u},\bar{v},\bar{h})\|_{E}^{2} .
		\end{align*}
		Analogously, a similar estimate also holds to $\intq \rho_{0}^{2} \left( \int_{0}^{1} \bar{v} \right) ^2 |(av_x)_x|^2$.
	\end{proof}
	
	\begin{prop}\label{2parts}
		The mapping $H: E\to F$ has the following properties:
		\begin{itemize}
			\item[(a)] $H$ is well defined;
			\item[(b)] For each $(u,v,h)\in E$, let us define $\bar{f}_{1}^{i} = D_i f_{1}(t,x,u,v)$ and $\bar{f}_{2}^{i} = D_i f_{2}(t,x,u,v)$, with $i=3,4$. Then, the linear mapping $T:E\longrightarrow G$ and $S:E\longrightarrow G$, given by
			\begin{align*}
			\displaystyle T(\bar{u},\bar{v},\bar{h}) 
			&= \bar{u}_t 
			-\ell_{1}^{\prime} \left( \int_{0}^{1} u\right)
			\left( \int_{0}^{1} \bar{u}\right)(au_x)_x 
			-\ell_1 \left(\int_{0}^{1} u\right) (a\bar{u}_x)_x \\
			&\hspace{1cm}+\bar{f}_{1}^{3} \bar{u}
			+ \bar{f}_{1}^{4} \bar{v}
			-\bar{h}\chi_{\omega},
			\end{align*}
			and
			\begin{align*}
			\displaystyle S(\bar{u},\bar{v},\bar{h}) 
			&= \bar{v}_t 
			-\ell_{2}^{\prime} \left( \int_{0}^{1} v\right)
			\left( \int_{0}^{1} \bar{v}\right)(av_x)_x 
			-\ell_2 \left(\int_{0}^{1} v\right) (a\bar{v}_x)_x \\
			&\hspace{1cm}+\bar{f}_{2}^{3} \bar{u}
			+ \bar{f}_{2}^{4} \bar{v},
			\end{align*}
			are the Gateaux derivative of $H_1$ and $H_2$ at $(u,v,h)\in E$, respectively.
		\end{itemize}
		
		\begin{proof} \noindent	
			\begin{itemize}
				\item[(a)] For each $(u,v,h)\in E$, we must check that $H(u,v,h)\in F$. Of course, $H_3 (u,v,h)=u(\cdot,0)\in H_{a}^{1} (0,1)$ and $H_4 (u,v,h)=v(\cdot,0)\in H_{a}^{1} (0,1)$. Besides, recalling the Assumptions $\eqref{hyp_a}$ and $\eqref{hyp_f}$, and using Corollary $\ref{conseq}$, we take
				\begin{align*}
				\displaystyle &\intq \rho_{0}^{2} |H_1(u,v,h)|^2
				=\intq \rho_{0}^{2} \left| u_t-\ell_1 \left(\int_0^1 u \right)(au_x)_x+f_1(t,x,u,v) -h\chi_\omega\right| ^2 \\
				&\hspace{0.5cm} \leq 3 \intq \rho_{0}^{2} |u_t - (au_x)_x -h\chi_{\omega}|^2
				+3\intq \rho_{0}^{2} \left| \ell_1 \left( \int_{0}^{1} u\right) -\ell_1 (0) \right| ^2 |(au_x)_x|^2 \\
				&\hspace{1cm}+ 3 \intq \rho_{0}^{2} |f_1 (t,x,u,v) - f_1 (t,x,0,0)|^2 \\
				&\hspace{0.5cm} \leq 3 \intq \rho_{0}^{2} |u_t - (au_x)_x -h\chi_{\omega}|^2
				+C\intq \rho_{0}^{2} \left( \int_{0}^{1} u\right) ^2 |(au_x)_x|^2 \\
				&\hspace{1cm}+C \intq \rho_{0}^{2} (|u|^2+|v|^2) \\
				&\hspace{0.5cm}\leq C(\|(u,v,h)\|_{E}^{2} +\|(u,v,h)\|_{E}^{4} ) 
				\end{align*}
				Therefore, $H_1 (u,v,h)\in G$ and, in a similar way, we also have $H_2 (u,v,h)\in G$.
				
				\item[(b)] In this part, fix $(u,v,h)\in E$. Thus, for any $(\bar{u},\bar{v},\bar{h})\in E$ and $\lambda \neq 0$, we take
				\begin{align*}
				\displaystyle &\frac{1}{\lambda} [H_1(u+\lambda \bar{u},v+\lambda \bar{v},h+\lambda \bar{h}) - H_1(u,v,h)] -T(\bar{u},\bar{v},\bar{h}) \\
				&\hspace{0.5cm} =-\left[ \frac{1}{\lambda} \left( \ell_1 \left( \int_{0}^{1} (u+\lambda \bar{u} )\right)  - \ell_1 \left( \int_{0}^{1} u\right)  \right) 
				-\ell_{1}^{\prime} \left( \int_{0}^{1} u\right) \left( \int_{0}^{1} \bar{u} \right) \right] (au_x)_x \\
				&\hspace{1cm} -\left[ \ell_1 \left( \int_{0}^{1} (u+\lambda \bar{u} )\right)  - \ell_1 \left( \int_{0}^{1} u\right) \right] (a\bar{u} _x)_x \\
				&\hspace{1cm} +\left[ \left( \frac{f_1(t,x,u+\lambda \bar{u},v+\lambda \bar{v})-f_1 (t,x,u,v)}{\lambda} \right) - (\bar{f}_{1}^{3} \bar{u} + \bar{f}_{1}^{4} \bar{v} )  \right] \\
				&\hspace{0.5cm}:=A_{\lambda}+B_{\lambda}+C_{\lambda}.
				\end{align*}
				We will see that $A_{\lambda}, B_{\lambda} \emph{ and } C_{\lambda}$ converge to zero in $G$, as $\lambda \to 0$. Indeed, taking into account  $\eqref{hyp_a}$ and Mean Value Theorem, for each $\lambda \neq 0$, there exists
				$u_{\lambda}=u_{\lambda}(t)$ such that $u_{\lambda} \to \int_{0}^{1} u$ for any $t\in[0,T]$,
				\[
				\displaystyle \intq \rho_{0}^{2} |A_{\lambda}|^2 = \intq \rho_{0}^{2}
				\left|  \ell_{1}^{\prime} (u_\lambda)  - \ell_{1}^{\prime} \left( \int_{0}^{1} u\right) \right|^{2}
				\left( \int_{0}^{1} \bar{u} \right)^2 |(au_x)_x|^2
				\to 0
				\]
				and
				\begin{align*}
				\displaystyle \intq \rho_{0}^{2} |B_{\lambda}|^2 
				&=\lambda^2 \intq \rho_{0}^{2} |\ell_{^1}^{\prime} (u_{\lambda})|^2 \left( \int_{0}^{1} \bar{u} \right)^2 |(au_x)_x|^2 \\ 
				&\leq C\lambda^2 \|(u,v,h)\|_{_E}^{2}  \|(\bar{u},\bar{v},\bar{h})\|_{_E}^{2} \to 0,
				\end{align*}
				as $\lambda\to 0$.
				On the other hand, for each $\lambda\neq 0$, we can apply again Mean Value and Lebesgue's Theorem, in order to obtain $w_{\lambda}=w_{\lambda} (t,x)$ satisfying: $w_{\lambda} \to (t,x,u,v)$, for any $(t,x)\in (0,T)\times (0,1)$, and
				\begin{align*}
				\displaystyle \intq \rho_{0}^{2} |C_{\lambda}|^2
				&=\intq \rho_{0}^{2} |[D_3 f_1 (w_{\lambda})\bar{u} + D_{4} f_1 (w_{\lambda})\bar{v}]-[\bar{f}_{1}^{3} \bar{u}+\bar{f}_{1}^{4} \bar{v}]|^2 \\
				&\hspace{-2cm}\leq \intq \rho_{0}^{2} |D_3 f_1 (w_{\lambda})-\bar{f}_{1}^{3}|^2 |\bar{u}|^2
				+\intq \rho_{0}^{2} |D_4 f_1 (w_{\lambda})-\bar{f}_{1}^{4}|^2 |\bar{v}|^2 
				\to 0,
				\end{align*}
				as $\lambda \to 0$. 
				
				As a consequence, $T$ is the Gateaux derivative of $H_1$ at $(u,v,h)\in E$. Likewise, $S$ is the Gateaux derivative of $H_2$ at $(u,v,h)\in E$.
			\end{itemize}
		\end{proof}
		
		\begin{prop}\label{c1}
			The mapping $H:E\to  F$ is continuously differentiable.
		\end{prop}
		
		\begin{proof}
			Clearly, $H_3, H_4 \in C^1 (E,H_{a}^{1})$. Now, take $(u,v,h)\in E$ and let $((u^n,v^n,h^n))_{n=1}^{\infty}$ be a sequence which converges to $(u,v,h)$ in $E$. For each $(\bar{u},\bar{v},\bar{h})\in \bar{B}_{1}(0)\subset E$, we have proved in Proposition $\ref{2parts}$ that
			\begin{multline*}
			\displaystyle H_{1}^{\prime} (u,v,h)(\bar{u},\bar{v},\bar{h}) 
			= \bar{u}_t 
			-\ell_{1}^{\prime} \left( \int_{0}^{1} u \right)
			\left( \int_{0}^{1} \bar{u}\right)(au_{x})_x 
			-\ell_1 \left(\int_{0}^{1} u \right) (a\bar{u}_x)_x \\
			+\bar{f}_{1}^{3} \bar{u}
			+ \bar{f}_{1}^{4} \bar{v}
			-\bar{h}\chi_{\omega}
			\end{multline*}
			and
			\begin{multline*}
			\displaystyle H_{1}^{\prime} (u^n,v^n,h^n)(\bar{u},\bar{v},\bar{h}) 
			= \bar{u}_t 
			-\ell_{1}^{\prime} \left( \int_{0}^{1} u^n \right)
			\left( \int_{0}^{1} \bar{u}\right)(au_{x}^{n})_x 
			-\ell_1 \left(\int_{0}^{1} u^n \right) (a\bar{u}_x)_x \\
			+D_3 f_1 (t,x,u^n,v^n) \bar{u}
			+ D_4 f_1 (t,x,u^n,v^n) \bar{v}
			-\bar{h}\chi_{\omega}.
			\end{multline*}
			Thus,
			\begin{align*}
			\displaystyle &(H^{\prime}_{1} (u^n,v^n,h^n) - H^{\prime}_{1} (u,v,h))(\bar{u},\bar{v},\bar{h})
			=-\ell_{1}^{\prime} \left( \int_{0}^{1} u^n\right) \left( \int_{0}^{1} \bar{u} \right) [a(u^n-u)_x]_x \\
			&\hspace{.5cm} -\left[ \ell_{1}^{\prime} \left( \int_{0}^{1} u^n\right)-\ell^{\prime}_{1} \left( \int_{0}^{1} u \right) \right] \left( \int_{0}^{1} \bar{u} \right)   (au_x)_x 
			-\left[ \ell_{1} \left( \int_{0}^{1} u^n\right)-\ell_{1} \left( \int_{0}^{1} u \right) \right] (a\bar{u}_x)_x \\
			&\hspace{.5cm}+[D_3 f_1 (t,x,u^n,v^n)-D_3 f_1 (t,x,u,v)]\bar{u}
			+[D_4 f_1 (t,x,u^n,v^n)-D_4 f_1 (t,x,u,v)]\bar{v} \\
			&\hspace{.5cm}:=X_{1}^{n}+X_{2}^{n}+X_{3}^{n}+X_{4}^{n}+X_{5}^{n}.
			\end{align*}
			From assumption $\ref{hyp_a}$ and Corollary $\ref{conseq}$, we get
			\begin{align*}
			\displaystyle \intq \rho_{0}^{2} |X_{1}^{n}|^2 & 
			\leq C\intq \rho_{0}^{2} \left( \int_{0}^{1} \bar{u} \right) ^2 |[a(u^n-u)_x]_x|^2
			\\
			& \leq C\|(u^n-u,v^n-v,h^n-h)\|_{E}^{2}
			\to 0
			\end{align*}
			and
			\begin{align*}
			\intq \rho_{0}^{2} |X_{3}^{n}|^2 
			& \leq C\intq \rho_{0}^{2} \left( \int_{0}^{1} (u^n-u)\right)^2 |(a\bar{u}_x)_x|^2\\
			& \leq C\|(u^n-u,v^n-v,h^n-h)\|_{E}^{2}
			\to 0,		
			\end{align*}
			as $n\to +\infty$.
			
			On the other hand, due to Lemma $\ref{supremo}$ and assumptions $\ref{hyp_a}$ and $\ref{hyp_f}$, we obtain
			\begin{align*}
			\intq \rho_{0}^{2} |X_{2}^{n}|^2 & \leq\sup_{t\in[0,T]} \left\lbrace e^{\frac{-2M}{m}} \left( \int_{0}^{1} \bar{u} \right)^2 \right\rbrace \intq \rho_{\ast}^{2}
			\left| \ell_{1}^{\prime} \left( \int_{0}^{1} u^n\right)-\ell^{\prime}_{1} \left( \int_{0}^{1} u \right) \right|^2 |(au_x)_x|^2 \\
			&\hspace{.5cm}\leq \intq \rho_{\ast}^{2}
			\left| \ell_{1}^{\prime} \left( \int_{0}^{1} u^n\right)-\ell^{\prime}_{1} \left( \int_{0}^{1} u \right) \right|^2 |(au_x)_x|^2 \to 0,
			\end{align*}
			and
			\begin{align*}
			\intq \rho_{0}^{2} |X_{4}^{n}|^2& 
			\leq C\left( \intq \rho_{0}^{2} |D_3 f_1 (t,x,u^n,v^n) - D_3 f_1 (t,x,u,v)|^2 |\bar{u}|^2 \right)^{\frac{1}{2}}  \left( \intq \rho_{0}^{2} |\bar{u}|^2 \right)^{\frac{1}{2}} \\
			&\leq C\left( \intq \rho_{0}^{2} |D_3 f_1 (t,x,u^n,v^n) - D_3 f_1 (t,x,u,v)|^2 |\bar{u}|^2 \right)^{\frac{1}{2}} \to 0,
			\end{align*}
			as $n\to +\infty$, where we have also applied Lebegue's Theorem. Clearly, $\intq \rho_{0}^{2} |X_{5}^{n}|^2$ is similar to $\intq \rho_{0}^{2} |X_{4}^{n}|^2$ and we conclude that $H^{\prime}_{1}$ is a continuous mapping. Analogously, this conclusion remains valid to $H^{\prime}_{2}$. In this case, the proof is complete.
		\end{proof}
	\end{prop}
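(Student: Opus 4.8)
The plan is to verify the two parts separately, in each case reducing every estimate to the cross-term control furnished by Corollary~\ref{conseq} together with the structural assumptions~\ref{hyp_a} and~\ref{hyp_f}.

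For part (a), observe that $H_3(u,v,h)=u(\cdot,0)$ and $H_4(u,v,h)=v(\cdot,0)$ lie in $H_a^1$ directly by the definition of $E$, so the only issue is to show $H_1(u,v,h),H_2(u,v,h)\in G$, i.e. that they are square integrable against $\rho_0^2$. I would write $\mu_1(x,\int_0^1 u)=\ell_1(\int_0^1 u)a(x)$ and, using $\ell_1(0)=1$, decompose
\[
H_1(u,v,h)=\big[u_t-(au_x)_x-h\chi_\omega\big]+\Big[1-\ell_1\Big(\int_0^1 u\Big)\Big](au_x)_x+\big[f_1(t,x,u,v)-f_1(t,x,0,0)\big].
\]
The first bracket is square integrable against $\rho_0^2$ by the very definition of the norm on $E$. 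For the second, the estimate $|1-\ell_1(\int_0^1 u)|=|\ell_1(0)-\ell_1(\int_0^1 u)|\le\|\ell_1'\|_\infty|\int_0^1 u|$ reduces the term to $\intq\rho_0^2(\int_0^1 u)^2|(au_x)_x|^2$, which is $\le C\|(u,v,h)\|_{_E}^4$ by Corollary~\ref{conseq}. For the third, since $f_1$ has bounded derivatives and $f_1(t,x,0,0)=0$, the Mean Value Theorem gives $|f_1(t,x,u,v)|\le C(|u|+|v|)$, so that contribution is $\le C\|(u,v,h)\|_{_E}^2$. Summing yields $H_1\in G$ with $\intq\rho_0^2|H_1|^2\le C(\|(u,v,h)\|_{_E}^2+\|(u,v,h)\|_{_E}^4)$, and the argument for $H_2\in G$ is identical.

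For part (b), I fix $(u,v,h)\in E$ and, for $(\bar u,\bar v,\bar h)\in E$ and $\lambda\neq0$, form the difference quotient $\lambda^{-1}[H_1(u+\lambda\bar u,v+\lambda\bar v,h+\lambda\bar h)-H_1(u,v,h)]-T(\bar u,\bar v,\bar h)$. Because $H_1$ is affine in the $u_t$ and $h\chi_\omega$ slots, those contributions cancel exactly against the matching terms of $T$, and what remains splits into three pieces: $A_\lambda$ from the difference quotient of $\ell_1(\int_0^1\,\cdot\,)$ tested against $(au_x)_x$ and compared with $\ell_1'(\int_0^1 u)(\int_0^1\bar u)$; the term $B_\lambda=-[\ell_1(\int_0^1(u+\lambda\bar u))-\ell_1(\int_0^1 u)](a\bar u_x)_x$; and $C_\lambda$ from the difference quotient of $f_1$ compared with $\bar f_1^3\bar u+\bar f_1^4\bar v$. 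The plan is to show each tends to $0$ in $G$. By the Mean Value Theorem there is $u_\lambda(t)\to\int_0^1 u$ with $A_\lambda=[\ell_1'(u_\lambda)-\ell_1'(\int_0^1 u)](\int_0^1\bar u)(au_x)_x$; since $\ell_1'$ is bounded and continuous, $\intq\rho_0^2|A_\lambda|^2\to0$ by dominated convergence, the dominating function $C\rho_0^2(\int_0^1\bar u)^2|(au_x)_x|^2$ being integrable by Corollary~\ref{conseq}. The term $B_\lambda$ is $O(\lambda)$ pointwise, so $\intq\rho_0^2|B_\lambda|^2\le C\lambda^2\|(u,v,h)\|_{_E}^2\|(\bar u,\bar v,\bar h)\|_{_E}^2\to0$, again via Corollary~\ref{conseq}. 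Finally, $C_\lambda$ is handled by the Mean Value Theorem in the last two variables together with Lebesgue's theorem, using that $D_3f_1,D_4f_1$ are bounded and continuous and that $\rho_0^2|\bar u|^2,\rho_0^2|\bar v|^2$ are integrable for $(\bar u,\bar v,\bar h)\in E$. This shows $T$ is the Gateaux derivative of $H_1$ at $(u,v,h)$, and the identical argument produces $S$ for $H_2$.

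I expect the genuine obstacle to be the nonlocal diffusion term in both parts: the factor $(au_x)_x$ is only controlled in the weaker weight $\rho_\ast$ (Proposition~\ref{ad2}), so to place it in the $\rho_0$-weighted space $G$ one must absorb the extra weight into the nonlocal factor $(\int_0^1 u)^2$. This is precisely the role of Lemma~\ref{supremo} and Corollary~\ref{conseq}, and the whole computation hinges on having those cross-term bounds available; the remaining subtlety is merely the careful application of dominated convergence, with the integrable dominating functions again supplied by Corollary~\ref{conseq} and the pointwise convergence by the continuity of $\ell_i'$ and of $D_jf_i$.
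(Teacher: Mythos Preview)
Your proposal is correct and follows essentially the same approach as the paper: the same three-term decomposition of $H_1$ in part (a) (controlled respectively by the $E$-norm, Corollary~\ref{conseq} via the Lipschitz bound on $\ell_1$, and the Lipschitz bound on $f_1$), and the same $A_\lambda+B_\lambda+C_\lambda$ splitting in part (b), each piece handled by the Mean Value Theorem together with dominated convergence and Corollary~\ref{conseq}. Your closing remark correctly identifies the crux of the argument, namely that Lemma~\ref{supremo}/Corollary~\ref{conseq} are what allow the $\rho_\ast$-controlled quantity $(au_x)_x$ to be upgraded to the $\rho_0$-weighted space $G$ via the nonlocal factor.
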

	
	\begin{proof}[\textbf{Proof of Theorem $\ref{main}$}]
		From Propositions $\ref{2parts}$ and $\ref{c1}$, we already know that the $H\in C^1(E,F)$. We state that $H^{\prime} (0,0,0):E\longrightarrow F$ is onto. In fact, consider $b_{11} (t,x)=D_3 f_1 (t,x,0,0)$, $b_{12} (t,x)=D_4 f_1 (t,x,0,0)$, $b_{21} (t,x)=D_3 f_2 (t,x,0,0)$ and $b_{22} (t,x)=D_4 f_2 (t,x,0,0)$ in ($\ref{pb-lin}$). Thus, given $(g_1,g_2,u_0,v_0) \in F$, we apply Theorem $\ref{linearcontrol}$ in order to obtain $(u,v,h)$ which solves ($\ref{pb-lin}$) and satisfies the relations in ($\ref{eq25}$). As a result, $(u,v,h)\in E$ and $H^{\prime}(0,0,0)(u,v,h)=(g_1, g_2,u_0,v_0)$, as we were supposed to check.
		
		Hence, by \textit{Lyusternik's Inverse Mapping Theorem} (Theorem \ref{lyusternik}) , there exist $r>0$ and a mapping $\tilde{H} :B_{r} (0) \subset F \longrightarrow E$ such that
		\[
		\displaystyle H(\tilde{H} (y)) = y \emph{  for each  } y\in B_{r}(0)\subset F .
		\]
		In particular, if $(\bar{u} _0,\bar{v} _0)\in H_{a}^{1} \times H_{a}^{1}$ and $\| (\bar{u}_0,\bar{v}_0)\|_{H_{a}^{1} \times H_{a}^{1}} < r$, we conclude that $(\bar{u},\bar{v},\bar{h})=\tilde{H} (0,0,\bar{u}_0,\bar{v}_0)\in E$ solves  $H(\bar{u},\bar{v},\bar{h}) = (0,0,\bar{u}_0,\bar{v}_0)$. Since $\intq \rho_{0}^{2} (|\bar{u}|^2+|\bar{v}|^2) <+\infty$, we get $\bar{u}(T,x)=\bar{v}(T,x)=0$ for any $x\in[0,1]$, following the result. 
	\end{proof}
	
	\section{Some Additional Comments}\label{sec6}

	As a first comment, we note that, in assumption \ref{hyp_a}, we have taken a weak type of degeneracy and so that Dirichlet boundary conditions are required in \eqref{pb1}. However, if we had chosen strong type degeneracy, see \cite{alabau2006carleman}, \eqref{pb1} it would be treated with Neumann conditions. In this context, we believe that analogous results can be obtained.
	
	Another interesting question is concerned with global null controlability to \eqref{pb1}, which does not seem to be simple. Perhaps, this kind of result relies on a global inverse mapping theorem, see \cite{de1994global}, but much more refined estimates are necessary.

	Under some changes in the Lemma \ref{supremo} and following the arguments presented here, Theorem \ref{main} can also be obtained if we consider \eqref{pb1} with  the diffusion coefficients 
	\[\left(\mu_1 \left(x,u\right)u_x \right)_x \text{ and } \left(\mu_1 \left(x,v\right)v_x \right)_x.\]
	
	Other important topics arrise from our current research:
	
	\begin{itemize}
		\item It would be very nice to obtain Theorem \ref{main} without imposing $\mu_1$ and $\mu_2$ have separated variables. Nevertheless, it is still an open problem.

		\item In the  system $\eqref{pb1}$, we can replace each nonlinearity  $f_i(t,x,u,v)$ by
		$f_i (t,x,u,v,u_x,v_x)$, with $i\in \{1,2\}$, in order to analyse whether it is possible to prove results about null controllability.

		\item Previously, in \cite{jrl2016}, we have obtained a local null controlability result for degenarate parabolic equations with nonlocal tems, which implies, throughout standard arguments, a local null boundary controllability result. However, the same fact can not be directly deduced for systems with a reduced number of controls, see \cite{ammar2011recent}. In other words, the boundary controllability of  
		\begin{equation*}
		\begin{cases}
		u_t-\left(\mu_1 \left(x,\int_{0}^{1} u\right)u_x \right)_x+f_1(t,x,u,v) =0, & (t,x)\in \dom,\\
		v_t-\left(\mu_2 \left(x,\int_{0}^{1} v \right)v_x \right)_x+f_2(t,x,u,v)=0, & (t,x) \in \dom,\\
		u(t,1)=v(t,0)=v(t,1)=0,\ u(t,0)= h(t) \  &  t\in (0,T),\\
		u(0,x)=u_0(x) \ \ \mbox{ and }\ \ v(0,x)=v_0(x), & x\in  (0,1).
		\end{cases} 
		\end{equation*}
		is a very interesting unknown issue.
		
	\end{itemize}
	
	\appendix

	\section{Wellposedeness of (\ref{pb1})}
	
	In this section, we will apply Galerkin's method in order to obtain a unique solution to ($\ref{pb1}$). Precisely, let us consider the functions $a$, $\ell_1$, $\ell_2$, $f_{1}$ and $f_2$ as in the assumptions $\ref{hyp_a}$ and $\ref{hyp_f}$. Additionally, let us suppose that 
	\[
	0<\ell _0 \leq \ell_1 , \ell_2 \leq L_1,\]
	where $\ell_0$ and $L_0$ are two positive constants. We observe that there exist $C_0 >0$ and $L_2 >0$ such that 
	\[	\displaystyle |f_i (t,x,s_1,s_2)|\leq C_0 (|s_1|+|s_2|)\]
	and
	\[	\displaystyle |\ell_{i}^{\prime} (s)| \leq L_2,\]
	for any $(t,x,s_1,s_2)\in [0,T]\times [0,1]\times \R \times \R$ and $s\in \R$, with $i\in \{1,2\}$. Under this conditions, we will prove the following result:
	\begin{thm}
		Take $T>0$. If $F_1 , F_2 \in L^{2} ((0,T)\times (0,1))$ and $u_0 , v_0\in H_{a}^{1} (0,1)$, then there exists a unique weak solution of		
		\begin{equation}
		\begin{cases}
		u_t-\ell_1 \left( \int_{0}^{1} u\right) (au_x)_x  +f_1(t,x,u,v) =F_1, & (t,x)\in \dom,\\
		
		v_t-\ell_2 \left( \int_{0}^{1} v\right) (av_x)_x+f_2(t,x,u,v)=F_2, & (t,x) \in \dom,\\
		u(t,0)=u(t,1)=v(t,0)=v(t,1)=0, &  t\in (0,T),\\
		u(0,x)=u_0(x) \ \ \mbox{ and }\ \ v(0,x)=v_0(x), & x\in  (0,1).
		\end{cases} 
		\label{pbum} 
		\end{equation}	 
	\end{thm}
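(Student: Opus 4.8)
The plan is to construct a solution by a Galerkin scheme adapted to the degenerate operator. Since $-(a u_x)_x$ with homogeneous Dirichlet conditions is self-adjoint, positive and has compact resolvent on $L^2(0,1)$ (with form domain $H_a^1$), there is an orthonormal basis $\{w_j\}_{j\ge1}$ of $L^2(0,1)$ of its eigenfunctions, which is also orthogonal in $H_a^1$. First I would seek approximate solutions $u_m=\sum_{j=1}^m g_j^m(t)w_j$ and $v_m=\sum_{j=1}^m h_j^m(t)w_j$ solving the finite-dimensional projection of \eqref{pbum} onto $\mathrm{span}\{w_1,\dots,w_m\}$. Because the coefficients $\ell_i(\int_0^1 u_m)$ and the terms $f_i(t,x,u_m,v_m)$ depend continuously on the unknowns $(g^m,h^m)$, the projected system is a Carathéodory system of ODEs, and the bounds $0<\ell_0\le\ell_i\le L_1$ keep its leading coefficient nondegenerate; Peano--Carathéodory theory then yields a local-in-time solution, extended to all of $[0,T]$ once the uniform estimates below are established.

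Next I would derive two levels of estimates, uniform in $m$. Testing the projected equations with $u_m$ and $v_m$ and adding, the nonlocal diffusion contributes $\ell_i(\int_0^1 u_m)\int_0^1 a|\partial_x u_m|^2\ge\ell_0\int_0^1 a|\partial_x u_m|^2$, while the sublinear bound $|f_i(t,x,s_1,s_2)|\le C_0(|s_1|+|s_2|)$ and Young's inequality control the reaction terms; Grönwall's lemma then bounds $(u_m,v_m)$ in $L^\infty(0,T;L^2(0,1))\cap L^2(0,T;H_a^1)$ in terms of the data. To exploit $u_0,v_0\in H_a^1$, I would test instead with $\partial_t u_m$ and $\partial_t v_m$, which produces $\tfrac12\,\ell_i(\cdot)\,\tfrac{d}{dt}\|\sqrt a\,\partial_x u_m\|^2$ up to a term involving $\tfrac{d}{dt}\ell_i(\int_0^1 u_m)=\ell_i'(\int_0^1 u_m)\int_0^1\partial_t u_m$; this is harmless because $\ell_i'$ is bounded, $\|\sqrt a\,\partial_x u_m\|^2$ is already controlled, and the residual $\int_0^1\partial_t u_m$ is absorbed by the $\int_0^1|\partial_t u_m|^2$ on the left via Young's inequality. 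This yields uniform bounds for $(u_m,v_m)$ in $L^\infty(0,T;H_a^1)\cap L^2(0,T;H_a^2)$ and for $(\partial_t u_m,\partial_t v_m)$ in $L^2((0,T)\times(0,1))$.

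With these bounds I would extract a subsequence converging weakly-$*$ in the spaces above; the bound on the time derivatives and the Aubin--Lions lemma, using the compact embedding $H_a^1\hookrightarrow L^2(0,1)$ (valid under \eqref{prop_a}), give strong convergence $u_m\to u$, $v_m\to v$ in $L^2((0,T)\times(0,1))$ and, up to a further subsequence, a.e. The nonlinear and nonlocal terms are then handled by this strong convergence: $f_i(t,x,u_m,v_m)\to f_i(t,x,u,v)$ a.e.\ with an $L^2$ dominating function, and $\int_0^1 u_m(t,\cdot)\to\int_0^1 u(t,\cdot)$ for a.e.\ $t$, so continuity of $\ell_i$ gives $\ell_i(\int_0^1 u_m)\to\ell_i(\int_0^1 u)$ a.e.\ and boundedly. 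Since $(a\partial_x u_m)_x\rightharpoonup(a\partial_x u)_x$ weakly in $L^2$, the product $\ell_i(\int_0^1 u_m)(a\partial_x u_m)_x$ converges weakly to $\ell_i(\int_0^1 u)(a\partial_x u)_x$, which lets me pass to the limit in every term and identify $(u,v)$ as a weak solution; the initial data are recovered from $(u_m,v_m)\in C([0,T];L^2)$.

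For uniqueness I would take two solutions, subtract, and test the difference $(U,V)=(u_1-u_2,v_1-v_2)$ with itself. The delicate point, and the main obstacle, is the nonlocal coefficient: writing
\[
\ell_1\Big(\int_0^1 u_1\Big)(a\partial_x u_{1})_x-\ell_1\Big(\int_0^1 u_2\Big)(a\partial_x u_{2})_x=\ell_1\Big(\int_0^1 u_1\Big)(a\partial_x U)_x+\Big[\ell_1\Big(\int_0^1 u_1\Big)-\ell_1\Big(\int_0^1 u_2\Big)\Big](a\partial_x u_{2})_x,
\]
the first term integrates by parts into a good coercive contribution, while the second must be absorbed using that $\ell_1$ is Lipschitz, so its bracket is bounded by $L_2\|U\|_{L^2(0,1)}$, paired against $(a\partial_x u_2)_x\in L^2(0,1)$, a factor available precisely because of the $H_a^2$ regularity obtained in the second a priori estimate. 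Together with the Lipschitz bounds on $f_i$, Young's inequality and Grönwall's lemma then force $(U,V)\equiv0$. The $H_a^2$ bound on at least one solution is therefore the crux that makes the nonlocal term tractable.
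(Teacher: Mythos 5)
Your proposal is correct, and its existence half follows essentially the paper's own route: a Galerkin scheme built on the eigenfunctions of $-(aw_x)_x$, Carath\'eodory's theorem for the projected ODE system, a first estimate from testing with $(u_m,v_m)$, and a second from testing with $(\partial_t u_m,\partial_t v_m)$. Two remarks on where you diverge. (i) In the second estimate, after Young's inequality the term produced by $\ell_i'$ leaves $\|\sqrt a\,\partial_x u_m\|_{L^2}^4$, and its absorption is not pointwise: one must apply Gronwall with the coefficient $\|\sqrt a\,\partial_x u_m\|_{L^2}^2$, which is only time-integrable by the first estimate (this is exactly the paper's exponential factor $e^{\frac{L_2}{2L_0}\mathcal{K}}$); your phrase ``already controlled'' compresses this, but the mechanism is the same. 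Also, testing with $\partial_t u_m$ alone does not directly yield the $L^2(0,T;H_a^2)$ bound you claim: the paper performs a third test with $w=-(au_{mx})_x$, while in your setup the quickest repair is to observe that the eigenfunction basis makes $(au_{mx})_x$ itself an admissible test function in the projected equation, whence $\ell_0\|(au_{mx})_x\|_{L^2}\le \|u_{mt}\|_{L^2}+\|f_1\|_{L^2}+\|F_1\|_{L^2}$. (ii) More substantially, you prove things the paper does not: you spell out the limit passage (Aubin--Lions via the compact embedding $H_a^1\hookrightarrow L^2$, a.e.\ bounded convergence of $\ell_i(\int_0^1 u_m)$ paired with weak convergence of $(a\partial_x u_m)_x$), where the paper merely invokes ``standard arguments''; and, above all, you give a uniqueness proof. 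The paper's statement asserts uniqueness, but its proof never addresses it; your argument --- splitting the nonlocal diffusion difference, bounding the Lipschitz bracket by $L_2\|U\|_{L^2(0,1)}$, pairing it against $(a\partial_x u_2)_x$, and closing with Gronwall using the time-integrability of $\|(a\partial_x u_2)_x\|_{L^2(0,1)}$ --- is correct within the regularity class $H^1(0,T;L^2)\cap L^2(0,T;H_a^2)$ produced by the construction, and it fills a genuine gap in the paper's own treatment.
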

	
	\begin{proof}
		Let $(w_i)_{i=1}^{\infty}$ be an orthonormal basis of $H_{a}^{1} (0,1)$ such that
		\[	-(a(x) w_{ix})_x = \lambda _i w_i.\]
		Fix $m\in \mathbb N ^{\ast}$. Due to Carathodory's theorem, there exist absolutely continuous functions $g_{im} = g_{im} (t)$ and $h_{im} = h_{im} (t)$, with $i\in \{ 1,\cdots ,m\}$ and $t\in [0,T]$, such that the functions
		\[		t\in [0,T] \longmapsto u_{m} (t) = \sum_{i=1}^{m} g_{im} (t)w_i \in H_{a}^{1} (0,1)\]
		and
		\[		t\in [0,T] \longmapsto v_{m} (t) = \sum_{i=1}^{m} h_{im} (t)w_i \in H_{a}^{1} (0,1)\]
		satisfy
		\begin{equation}
		\begin{dcases}
		\begin{multlined}[b][0.8\textwidth]
		(u_{mt},w)-\ell_1 \left( \int_{0}^{1} u_m \right) \left( (au_{mx})_x ,w\right) \\  +(f_1(t,x,u_m,v_m),w) =(F_1 ,w),\ (t,x)\in \dom,
		\end{multlined}\\
		\begin{multlined}[b][0.8\textwidth]
		(v_{mt},\tilde{w})-\ell_2 \left( \int_{0}^{1} v_m \right) \left( (av_{mx})_x\right)_x,\tilde{w} ) \\
		+(f_2(t,x,u_m,v_m),\tilde{w} )=(F_2 ,\tilde{w} ),\   (t,x) \in \dom,
		\end{multlined}\\
		u_m (0)\to u_0 \emph{ in } H^{1}_{a} (0,1), \\  
		v_m (0)\to v_0 \emph{ in } H^{1}_{a} (0,1),
		\end{dcases} 
		\label{pbap} 
		\end{equation}
		for any $w,\tilde{w} \in [w_1 , \cdots ,w_m]$, where $(\cdot ,\cdot )$ denotes the inner product in $L^2 (0,1)$. Next,  our goal is to prove three energy estimates to $u_m$ and $v_m$. 
		
		Firstly, taking $w=u_m$ and $\tilde{w} = v_m$ in ($\ref{pbap}$), we obtain	
		\begin{align*}
		& \frac{1}{2} \frac{d}{dt} (\|u_m \|_{L^2 (0,1)}^{2} +\|v_m \|_{L^2 (0,1)}^{2} )
		+\ell_1 \left( \int_{0}^{1} u_m \right) \| \sqrt{a} u_{mx}\|_{L^2 (0,1)}^{2}  +\ell_2 \left( \int_{0}^{1} v_m \right) \| \sqrt{a} v_{mx}\|_{L^2 (0,1)}^{2} \\
		& \leq C_1 (\|u_m \|_{L^{2} (0,1)}^{2} + \|v_m \|_{L^{2} (0,1)}^{2}) +  C_2 (\|F_1 \|_{L^{2} (0,1)}^{2} + \| F_2 \|_{L^{2} (0,1)}^{2}).
		\end{align*}
		Hence, Gronwall's inequality yields
		\begin{align}\label{E1}
		\displaystyle &\max_{t\in[0,T]} \|u_m (t) \|_{L^2 (0,1)}^{2} +\max_{t\in[0,T]} \|v_m (t) \|_{L^2 (0,1)}^{2} +2L_0 \int_{0}^{T} (\| \sqrt{a} u_{mx}\|_{L^2 (0,1)}^{2} 
		+\| \sqrt{a} v_{mx}\|_{L^2 (0,1)}^{2} ) \nonumber \\
		&\hspace{0.5cm} \leq C\left( \|u_0 \|_{L^{2} (0,1)}^{2} + \|v_0 \|_{L^{2} (0,1)}^{2} + \int_{0}^{T} (\|F_1 \|_{L^{2} (0,1)}^{2} + \| F_2 \|_{L^{2} (0,1)}^{2}) \right) \nonumber \\
		&\hspace{0.5cm} =: \mathcal K.
		\end{align}		
		Now, let us consider $w=u_{mt}$ and $\tilde{w}=v_{mt}$. Thus,				
		\begin{align*}
		& \begin{multlined}[t][0.9\textwidth]
		\|u_{mt} (t) \|_{L^2 (0,1)}^{2} +  \|v_{mt} (t) \|_{L^2 (0,1)}^{2} \\
		+\frac{1}{2} \frac{d}{dt} 
		\left( \ell_1 \left( \int_{0}^{1} u_m \right) \| \sqrt{a} u_{mx}\|_{L^2 (0,1)}^{2} 
		+\ell_2 \left( \int_{0}^{1} v_m \right) \| \sqrt{a} v_{mx}\|_{L^2 (0,1)}^{2} \right) 
		\end{multlined}\\
		&\begin{multlined}[t][0.9\textwidth]
		\leq C_{0} (\| u_m\|_{L^{2} (0,1)} + \| v_m \|_{L^{2} (0,1)}) (\| u_{mt} \|_{L^2 (0,1)} + \| v_{mt} \|_{L^{2} (0,1)} ) \\
		+C(\| F_1 \|_{L^{2} (0,1)} + \| F_2 \|_{L^{2} (0,1)})(\| u_{mt} \|_{L^{2} (0,1)} + \| v_{mt} \|_{L^{2} (0,1)}) \\
		+\frac{1}{2} \left| \int_{0}^{1} u_{mt}\right| \ell_{1}^{\prime} \left( \int_{0}^{1} u_m \right)  \| \sqrt{a} u_{mx}\|_{L^{2} (0,1)}^{2}  
		+\frac{1}{2} \left| \int_{0}^{1} v_{mt}\right| \ell_{2}^{\prime} \left( \int_{0}^{1} v_m \right)  \| \sqrt{a} v_{mx}\|_{L^{2} (0,1)}^{2} 
		\end{multlined}
		\end{align*}
		\begin{align*}
		&\begin{multlined}[t][0.9\textwidth]
		\leq C_{0} \left(\| u_m\|_{L^{2} (0,1)} + \| v_m \|_{L^{2} (0,1)}) (\| u_{mt} \|_{L^{2} (0,1)} + \| v_{mt} \|_{L^{2} (0,1)} \right) \\
		+C \left(\| F_1 \|_{L^{2} (0,1)} + \| F_2 \|_{L^{2} (0,1)})(\| u_{mt} \|_{L^{2} (0,1)} + \| v_{mt} \|_{L^{2} (0,1)}\right) \\
		+L_{2} \left(\| u_{mt} \|_{L^2 (0,1)}   \| \sqrt{a} u_{mx}\|_{L^{2} (0,1)}^{2}  
		+\| v_{mt} \|_{L^2 (0,1)}   \| \sqrt{a} v_{mx}\|_{L^{2} (0,1)}^{2} \right) 
		\end{multlined} \\ 
		&\begin{multlined}[t][0.9\textwidth]
		\leq C(\varepsilon ) \left(\| u_m \|_{L^2 (0,1)}^{2} + \|v_m \|_{L^2 (0,1)}^{2} + \| F_1 \|_{L^2 (0,1)}^{2} + \| F_2 \|_{L^2 (0,1)}^{2}\right) \\
		+\varepsilon \left(\| u_{mt} \|_{L^2 (0,1)}^{2} + \| v_{mt} \|_{L^2 (0,1)}^{2}\right) \\
		+L_2 \left(\| \sqrt{a} u_{mx}\|_{L^2 (0,1)}^{2} + \| \sqrt{a} v_{mx} \|_{L^2 (0,1)}^{2}\right)^2 . 
		\end{multlined}
		\end{align*}
		
		Thus, integrating in $[0,T]$, we get		
		\begin{align} \label{E2}
		\displaystyle &\int_{0}^{t} (\| u_{mt}\|_{L^2 (0,1)}^{2} +\| v_{mt}\|_{L^2 (0,1)}^{2})
		+L_{0} (\| \sqrt{a} u_{mx}\|_{L^2 (0,1)}^{2} + \| \sqrt{a} v_{mx}\|_{L^2 (0,1)}^{2}) \nonumber \\
		&\hspace{0.5cm} \leq \int_{0}^{T} (\| u_m \|_{L^2 (0,1)}^{2} + \| v_m \|_{L^2 (0,1)}^{2} + \| F_1 \|_{L^2 (0,1)}^{2} + \| F_2 \|_{L^2 (0,1)}^{2}) \nonumber \\
		&\hspace{1cm} +L_1 (\| \sqrt{a} u_{0x}\|_{L^2 (0,1)}^{2} + \| \sqrt{a} v_{0x} \|_{L^2 (0,1)}^{2}) \nonumber \\
		&\hspace{1cm}+L_2 \int_{0}^{t} (\|\sqrt{a} u_{mx}\|_{L^2 (0,1)}^{2} + \| \sqrt{a} v_{mx}\|_{L^2 (0,1)}^{2})(\| \sqrt{a} u_{mx}\|_{L^2 (0,1)}^{2} + \| \sqrt{a} v_{mx}\|_{L^2 (0,1)}^{2}) \nonumber \\
		&\hspace{1cm} \leq (\mathcal K T +\| \sqrt{a} u_{0x}\|_{L^2 (0,1)}^{2} + \| \sqrt{a} v_{0x} \|_{L^2 (0,1)}^{2}) e^{\frac{L_2 }{L_0} \int_{0}^{T} (\| \sqrt{a} u_{mx}\|_{L^2 (0,1)}^{2} + \| \sqrt{a} v_{mx} \|_{L^2 (0,1)}^{2})} \nonumber \\
		&\hspace{1cm} \leq  (\mathcal K T +\| \sqrt{a} u_{0x}\|_{L^2 (0,1)}^{2} + \| \sqrt{a} v_{0x} \|_{L^2 (0,1)}^{2}) e^{\frac{L_2}{2L_0} \mathcal K} \nonumber \\
		&\hspace{1cm} =: \mathcal K _1 ,
		\end{align}
		
		for any $t\in [0,T]$, where we have used estimate ($\ref{E1}$) and applied Gronwall's inequality.
		
		Finally, we will prove the last estimate which is necessary to build a weak solution of ($\ref{pbum}$). 
		
		In fact, taking $w=-(au_{mx})_{x}$ and $\tilde{w} = -(av_{mx})_{x}$ in ($\ref{pbap}$), we have 
		
		\begin{align*}
		\displaystyle &\frac{1}{2} \frac{d}{dt} (\|\sqrt{a} u_{mx}\|_{L^{2} (0,1)}^{2} + \| \sqrt{a} v_{mx} \|_{L^{2} (0,1)}^{2})
		+L_{0} (\| (au_{mx})_{x}\|_{L^{2} (0,1)}^{2} + \| (av_{mx})_{x}\|_{L^{2} (0,1)}^{2}) \\
		&\hspace{0.5cm}\leq [C_0 (\| u_m \|_{L^{2} (0,1)} + \| v_m \|_{L^{2} (0,1)}) + \| F_1 \|_{L^{2} (0,1)} + \| F_2 \|_{L^{2} (0,1)}] \\
		&\hspace{1cm} \cdot (\| (au_{mx})_x \|_{L^{2} (0,1)}^{2} + \| (av_{mx})_x\|_{L^{2} (0,1)}^{2}) \\
		&\hspace{0.5cm}\leq  \tilde C (\varepsilon) [ \mathcal K + \| F_1 \|_{L^{2} (0,1)}^{2} + \| F_2 \|_{L^{2} (0,1)}^{2} ] \\
		&\hspace{1cm} +\varepsilon (\| (au_{mx})_x \|_{L^{2} (0,1)}^{2} + \| (av_{mx})_x\|_{L^{2} (0,1)}^{2}).
		\end{align*}
		
		As a conclusion,	
		\begin{align}\label{E3}
		&(\| \sqrt{a} u_{mx} \|_{L^{2} (0,1)}^{2} + \| \sqrt{a} v_{mx} \|_{L^{2} (0,1)}^{2})
		+L_0 \int_{0}^{T} [\| (au_{mx})_{x} \|_{L^{2} (0,1)}^{2} + \| (av_{mx} )_x \|_{L^{2} (0,1)}^{2}] \nonumber \\
		&\hspace{0.5cm}\leq  2\tilde C (\varepsilon) [ \mathcal K + \| F_1 \|_{L^{2} (0,1)}^{2} + \| F_2 \|_{L^{2} (0,1)}^{2} ] 
		+(\| \sqrt{a} u_{0x} \|_{L^{2} (0,1)}^{2} + \| \sqrt{a} v_{0x} \|_{L^{2} (0,1)}^{2}) \nonumber \\
		&\hspace{0.5cm}=: \mathcal K _2 .
		\end{align}
		
		Since $\mathcal K _1$, $\mathcal K _2$ and $\mathcal K _3$ do not depend on $m\in \mathbb N ^{\ast}$, the three estimates ($\ref{E1}$), ($\ref{E2}$) and ($\ref{E3}$) imply that the sequences $(u_m)_{m=1}^{\infty}$ and $(v_m)_{m=1}^{\infty}$ are bounded in
		$$
		L^2 (0,T;L^2(0,1))\cap H^{1} (0,T; H_{a}^{2} (0,1)).  
		$$ 
		Therefore, there exist subsequences $(u_{m_{j}})_{j=1}^{\infty}$ of $(u_m)_{m=1}^{\infty}$ and
		$(v_{m_{j}})_{j=1}^{\infty}$
		$(v_m)_{m=1}^{\infty}$, such that 
		$$
		u_{m_{j}} \rightarrow u \text{ and } v_{m_{j}} \rightarrow v, \text{ as } m\to \infty
		$$
		weakly in $L^2 (0,T;L^2(0,1))\cap H^{1} (0,T; H_{a}^{2} (0,1))$. By standard arguments, we can conclude that $(u,v)$ is a weak solution of ($\ref{pbum}$). 
	\end{proof}
	
	\begin{rem}
		Above, we have proved the well-posedness of \eqref{pb1}, by assuming that $\ell_i (r) \geq C>0$, for $i=1,2$. However, Theorem $\ref{main}$ has been obtained under the Assumptions \ref{hyp_a} and \ref{hyp_f}, where just $\ell_i (0) >0$ is required. This last hypotheses is sufficient to prove that $H\in C^1 (E;F)$ is well defined and that $ H'(0,0,0)\in L(E;F)$ is onto.
	\end{rem}

	\section{$\E$ is a Hilbert Space} \label{hilbert}
	
	In this section we will prove that $\E$ is a Hilbert space. Let us recall that
	\begin{multline*}
	\E:= \bigg\{  (u,v,h)\in\mathbf{ \left[L^2(\dom)\right]^2}\times L^2(\domw):\\
	u(t,\cdot),v(t,\cdot)  \text{ are absolutely continuous in } [0,1], \text{ a.e. in } [0,T], \\
	u_t, u_x, (au_x)_x, \rho_{\ast} h\in L^2 ((0,T)\times (0,1)),   
	v_t, v_x, (av_x)_x \in L^2 ((0,T)\times (0,1)) ,\\
	\rho_{0} u, \rho_{0} [u_t - (au_x)_x -h\chi_{\omega}],
	\rho_{0} v, \rho_{0} [v_t - (av_x)_x ] \in L^2 ((0,T)\times (0,1)),\\ 
	u(t,1)\equiv v(t,1)\equiv u(t,0)\equiv v(t,0)\equiv 0 \text{ a.e in } [0,T], \emph{ and } 
	u(0,\cdot), v(0,\cdot)\in H_{a}^{1}\bigg\},
	\end{multline*}
	with the following norm induced by an inner product
	\begin{multline*}
	\n{(u,v,h)}{_E}^2:=
	\intq \rho_{0}^{2} (|u|^2 +|v|^2)
	+\intw \rho_{\ast}^{2} |h|^2 \\
	\hspace{0.5cm} +\intq \rho_{0}^{2} |u_t - (au_x)_x -h\chi_{\omega}|^2
	+\intq \rho_{0}^{2} |v_t - (av_x)_x |^2 \\
	\hspace{0.5cm}+\|u(0,\cdot)\|_{H_{a}^{1}}^{2}
	+\|v(0,\cdot)\|_{H_{a}^{1}}^{2} .
	\end{multline*}

	\begin{proof} 
		Let $(u_n,v_n,h_n)_{n=1}^\infty$ be a Cauchy sequence in $\E$. In particular, 
\begin{equation}\label{conv1}
	\begin{cases}
(\rho_0u_n)_{n=1}^\infty,\ (\rho_0v_n)_{n=1}^\infty,\ (\rho_\ast h_n\chi_{\omega })_{n=1}^\infty,\ 
 (\rho_{0} [{u_n}_{_t} - (a{u_n}_{_x})_x -h_n\chi_{\omega}])_{n=1}^\infty,  \\
  (\rho_{0} [{v_n}_{_t} - (a{v_n}_{_x})_x ])_{n=1}^\infty, \text{ are Cauchy in } L^2(\dom).\\
  (u_n(0,\cdot))_{n=1}^\infty,  (v_n(0,\cdot))_{n=1}^\infty, \text{ are Cauchy in } H_a^1. 
 	\end{cases}
\end{equation}
And, since $\rho_0,\rho_\ast\geq C_T$, we also have that
\begin{equation}\label{conv2}
\begin{cases}
(u_n)_{n=1}^\infty,\ (v_n)_{n=1}^\infty,\ (h_n\chi_{\omega })_{n=1}^\infty, \\
( {u_n}_{_t} - (a{u_n}_{_x})_x -h_n\chi_{\omega})_{n=1}^\infty,\ ( {v_n}_{_t} - (a{v_n}_{_x})_x )_{n=1}^\infty, \text{ are Cauchy in } L^2(\dom)
\end{cases}
\end{equation}
In particular, there exists $h\in L^2(\domw)$ such that 
\begin{equation}\label{conv4}
h_n\to h \text{ in } L^2(\domw).
\end{equation}

Now, let us set
\begin{equation*}
\begin{cases}
g_{1,n}:= {u_n}_{_t} - (a{u_n}_{_x})_x +b_{11}u_n+b_{12}v_n-h_n\chi_{\omega },\\
g_{2,n}:= {u_n}_{_t} - (a{v_n}_{_x})_x +b_{21}u_n+b_{22}v_n,\\
u_{0,n}:=u_n(0,\cdot), \ v_{0,n}:=v_n(0,\cdot).
\end{cases}
\end{equation*}
In this case, we can see that $(u_n,v_n)$ is a weak solution to
	\begin{equation*}
\begin{cases}
{u_n}_{_t} - (a{u_n}_{_x})_x +b_{11}u_n+b_{12}v_n =h_n\chi_\omega+g_{1,n}, & (t,x) \mbox{ in }\dom,\\
{u_n}_{_t} - (a{v_n}_{_x})_x +b_{21}u_n+b_{22}v_n =g_{2,n}, & (t,x) \mbox{ in }\dom,\\
u_n(t,1)=u_n(t,0)=v_n(t,0)=v_n(t,1)=0, &  t \mbox{ in } (0,T),\\
u_n(0,x)=u_{0,n}(x) \ \ \mbox{ and }\ \ v_n(0,x)=v_{0,n}(x), & x\mbox{ in } (0,1),
\end{cases} 
\end{equation*}
where $g_{1,n}, h_n\chi_{\omega }$,  $g_{2,n} \in L^2(\dom)$ and $u_{0,n},v_{0,n}\in H_a^1$.
Hence, Proposition \ref{prop-WP-lin} gives us that 
\[u_n,v_n \in  H^1(0,T;L^2(0,1))\cap L^2(0,T;H_a^2)\cap C^0([0,T];H_a^1)\]
and satisfies the inequality \eqref{ineq1}. Therefore, from \eqref{conv2} together with the third term of \eqref{conv1},   we have that $(u_n)_{n=1}^\infty,\ (v_n)_{n=1}^\infty$ are Cauchy sequences in the Banach space $ H^1(0,T;L^2(0,1))\cap L^2(0,T;H_a^2)\cap C^0([0,T];H_a^1)$. As a consequence, there exist $u,v$ such that 
\[ u_n \to u \text{ and } v_n\to v \text{ in } H^1(0,T;L^2(0,1))\cap L^2(0,T;H_a^2)\cap C^0([0,T];H_a^1).\]

This convergence together with \eqref{conv4} guarantee that 
\[(u_n,v_n,h_n)\to (u,v,h) \text{ in } E. \]
	\end{proof}

	\section{Some properties of $J_n$} \label{Jn}
	
	In this section, we will prove that the functional $J_n$ defined in Theorem \ref{linearcontrol} is lower semi-continuous, strictly convex and coercive. For convenience, let us recall its definition: 
	\begin{align*}
	J_n(u,v,h)& =\frac{1}{2}\intq \rho_{0,n}^2\left(|u|^2+|v|^2\right)+\frac{1}{2}\intq \rho_{\ast,n}^2|h|^2\\
	&= \frac{1}{2}\left(\n{\rho_{0,n}u}{L^2}^2+\n{\rho_{0,n}v}{L^2}^2 +\n{\rho_{\ast,n}h}{L^2}^2\right)
	\end{align*}
	where  $(u,v,h)\in \mathbf{ \left[L^2(\dom)\right]^3}$.
	
	Since $J_n$ is a sum of squared norms, it is strictly convex. In order to prove the remaining properties, we will need the following lemma.
	
	\begin{lemma}\label{lemma-const}
		There exist  constants $C_{n,T}>0$, depending on $n$ and $T$, and $C_T>0$, depending only on $T$,  such that
		\[ 0<C_T\leq \rho_{0,n}\leq C_{n,T} \text{ in } [0,T]\times [0,1]. \]
		and
		\[ 0<C_T\leq  \rho_{\ast,n}\leq C_{n,T} \text{ in } [0,T]\times [0,1]. \]
	\end{lemma}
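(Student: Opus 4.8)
The plan is to reduce both two‑sided estimates to the behaviour of a single scalar profile in the time variable. First I would record the elementary bounds on the building blocks. Writing $\beta(x):=e^{\lambda(|\psi|_\infty+\psi)}-e^{3\lambda|\psi|_\infty}$, the inequality $|\psi|_\infty+\psi\le 2|\psi|_\infty<3|\psi|_\infty$ (valid since $|\psi|_\infty>0$) shows $\beta\in C([0,1])$ is strictly negative with $0<\beta_-\le|\beta(x)|\le\beta_+$, where the constants depend only on $\lambda$, hence on $T$; likewise $\eta(x)=e^{\lambda(|\psi|_\infty+\psi)}\in[1,\eta_+]$ and $1\le m_n\le n$. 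Since $A=\tau\beta$ and $\zeta=\tau\eta$, the regularization amounts to replacing $\tau$ by $\tau_n(t):=\tau(t)(T-t)^4/\big((T-t)^4+1/n\big)$, so that $A_n=\tau_n\beta$ and the weights are governed by $\tau_n$. The role of the factor $(T-t)^4$ is to cancel the blow‑up of $\tau=1/m$ at $t=T$: on $[T/2,T]$ one has $m(t)=t^4(T-t)^4$, whence $\tau_n(t)=\big(t^4[(T-t)^4+1/n]\big)^{-1}$, which is continuous and strictly positive up to $t=T$. Combined with $m>0$ on $[0,T)$ and $m(0)>0$, this gives $\tau_n\in C([0,T])$ with $0<\tau_{n,-}\le\tau_n(t)\le\tau_{n,+}<\infty$ for each fixed $n$.

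With this notation both weights take the form $\rho_{0,n}=\eta^{-2}F_2(\tau_n)$ and $\rho_{\ast,n}=\eta^{-4}m_n F_4(\tau_n)$, where $F_k(\tau):=e^{s|\beta|\tau}\tau^{-k}$ (recall $\beta<0$, so $-sA_n=s|\beta|\tau_n\ge0$). The $n$‑dependent upper bound is then immediate: for fixed $n$, continuity of $\tau_n$ on the compact interval $[0,T]$ gives $F_k(\tau_n)\le\max_{[\tau_{n,-},\tau_{n,+}]}F_k<\infty$, and since $\eta^{-k}\le1$ and $m_n\le n$ we obtain $\rho_{0,n}\le C_{n,T}$ and $\rho_{\ast,n}\le C_{n,T}$. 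For the lower bound I would exploit that $F_k$ is a \emph{fixed} scalar function, independent of $n$: the computation $\partial_\tau\log F_k=s|\beta|-k/\tau=0$ shows that on $(0,\infty)$ it attains the strictly positive minimum $\big(es|\beta|/k\big)^k\ge\big(es\beta_-/k\big)^k$. Hence $\rho_{0,n}\ge\eta_+^{-2}(es\beta_-/2)^2$ and $\rho_{\ast,n}\ge\eta_+^{-4}(es\beta_-/4)^4$, and taking $C_T$ to be the smaller of these constants yields the claimed lower bound \emph{uniformly in $n$}.

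The delicate point — essentially the only place where the construction matters — is the endpoint $t=T$ together with the requirement that $C_T$ be independent of $n$. Observe that $\tau_{n,+}\to+\infty$ as $n\to\infty$ (indeed $\tau_n(T)=n/T^4$), so one cannot extract a uniform lower bound from any pointwise control of $\tau_n$ alone; this is exactly why the estimate must be routed through the global minimum of the profile $F_k$ rather than through bounds on $\tau_n$. Conversely, it is the very same divergence of $\tau_{n,+}$ that forces the upper constant to depend on $n$. Everything else is bookkeeping, the two estimates being identical up to the harmless factor $m_n\in[1,n]$ and the exponent $k\in\{2,4\}$.
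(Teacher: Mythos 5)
Your proof is correct under the same reading of the weights that the paper's own proof uses, and its route to the key estimate is genuinely different, so a comparison is worthwhile. The paper argues by splitting time: on $[0,T/2]$ both $\zeta$ and $-sA_n$ are bounded above and below by constants depending only on $T$, so both bounds are immediate there; on $[T/2,T]$ it gets the $n$-dependent upper bound from $-A_n\le Cn$, and the $n$-uniform lower bound from the inequality $e^x\ge x^3/3!$, which trades the exponential for the third power of $-A_n$ and is then combined with a positive lower bound on $-A_n$ that is uniform in $n$. You instead collapse the whole lower bound into a single observation valid on all of $[0,T]$ at once: the scalar profile $F_k(\tau)=e^{s|\beta|\tau}\tau^{-k}$ has the strictly positive global minimum $(es|\beta|/k)^k$ over $\tau\in(0,\infty)$, so no pointwise information about $\tau_n$ is needed and the uniformity in $n$ is transparent. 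Both arguments rest on the same mechanism (the exponential beats the negative power of $\tau_n$), but yours avoids the time-splitting and any lower bound on $-A_n$, while the paper's yields explicit constants; your remark that $\tau_n(T)=n/T^4$ blows up, so that no bound on $\tau_n$ alone can ever produce an $n$-uniform constant, is exactly the right diagnosis of where the difficulty sits.

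There is one caveat, but it is one you share with the paper rather than introduce. Your identification $\rho_{0,n}=\eta^{-2}F_2(\tau_n)$ and $\rho_{\ast,n}=\eta^{-4}m_nF_4(\tau_n)$ silently regularizes the power factor as well as the exponential: it reads the weights as $e^{-sA_n}\zeta_n^{-2}$ and $e^{-sA_n}\zeta_n^{-4}m_n$ with $\zeta_n:=\tau_n\eta$, whereas the paper defines $\rho_{0,n}=e^{-sA_n}\zeta^{-2}$ and $\rho_{\ast,n}=e^{-sA_n}\zeta^{-4}m_n$ with the unregularized $\zeta=\tau\eta$. Under that literal definition the lower bound of the lemma is actually false near $t=T$: for fixed $n$ one has $-sA_n\le C_n$ on $[T/2,T]$, so $e^{-sA_n}$ stays bounded while $\zeta^{-2}\to0$ as $t\to T^-$, hence $\rho_{0,n}\to0$ there. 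The paper's own proof makes exactly the same substitution: its step $\frac{-s^3A_n^3}{3!}\zeta^{-2}=\frac{s^3\beta^2}{3!}(-A_n)$ (in the paper's notation, where $A=-\beta\zeta$ with $\beta>0$) is an identity only when $\zeta$ is replaced by $\zeta_n$, because $-A_n=\beta\zeta_n$ and therefore $(-A_n)^3\zeta^{-2}=\beta^2(-A_n)(\zeta_n/\zeta)^2$, with $(\zeta_n/\zeta)^2$ tending to $0$ as $t\to T^-$. So both your proof and the paper's in fact establish the lemma for the fully regularized weights; the defect lies in the paper's stated definition of $\rho_{0,n}$ and $\rho_{\ast,n}$, not in your argument relative to the paper's.
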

	\begin{proof}
		
		We will prove the estimates to $\rho_{0,n}$, those corresponding to $\rho_{\ast,n}$ are analogous. Firstly, we note that $\zeta$ is bounded from below by a positive constant depending only on $T$, which we will denote by $m_T$. 
		
		Secondly, note that we can rewrite $A$ as
		\[A=- \left(\frac{e^{3\lambda|\psi|_\infty}}{\eta}-1\right)\zeta:=-\beta(x)\zeta, \]
		where $\beta(x)=\frac{e^{3\lambda|\psi|_\infty}}{\eta}-1$ is bounded and strictly positive for $\lambda$ large enough, i.e., there exist constants $\beta_0,\beta_1>0$ such that
		\[0<\beta_0 \leq \beta(x)\leq \beta_1, \ \forall x\in [0,1]. \]
		Thus,		 
		\[ -\beta_1\zeta \leq A\leq -\beta_0\zeta<0.\]
		
		
		If  $t\in [0,T/2]$, we can see that $\zeta(t)$ is bounded from above and below by positive constants which depend only on $T$, that is,  there exist constants $m_T,M_T>0$, depending only on $T$,   such that
		\[0<m_T\leq \zeta \leq  M_T.\]
		
		Now, notice that
		\[\frac{T^4}{16(T^4+1)}\leq \frac{(T-t)^4}{(T-t)^4+\frac{1}{n}}\leq 1 \]
		and, since $A_n=A\frac{(T-t)^4}{(T-t)^4+\frac{1}{n}}<0$, we have that 
		\[0<s\beta_0m_T\frac{T^4}{16(T^4+1)}\leq -sA_n\leq s\beta_1 M_T \Rightarrow 1\leq e^{-sA_n}\leq  e^{s\beta_1 M_T} .\]
		Therefore, since $\rho_{0,n}=e^{-sA_n}\zeta^{-2}$, we get that
		\[0< \frac{1}{M_T^2} \leq \rho_{0,n}\leq e^{s\beta_1 M_T}\frac{1}{m_T^2}. \]
		
		If $t\in [T/2,T]$, we have that \[A_n=-\beta\zeta\frac{(T-t)^4}{(T-t)^4+\frac{1}{n}}=-\beta\frac{\eta}{t^4}\frac{1}{(T-t)^4+\frac{1}{n}}.\]
		And, as a consequence,
		\[\frac{\beta_0}{T^4(\frac{T^4}{16}+1)}\leq -A_n\leq \frac{\beta_1|\eta|_\infty 16n}{T^4}. \]
		
		Hence, 
		\[ e^{-sA_n}\zeta^{-2}\leq e^{\frac{s\beta_1|\eta|_\infty 16n}{T^4}}m_T^{-2}:=C_{n,T}.\]
		Therefore, since $e^x\geq \frac{x^3}{3!}$ for all $x>0$, we finally conclude that
		\[ e^{-sA_n}\zeta^{-2}\geq \frac{-s^3A_n^3}{3!}\zeta^{-2} =\frac{s^{3}\beta^
			2}{3!}(-A_n)\geq \frac{s^3\beta_0^3}{T^4(\frac{T^4}{16}+1)}:=C_T.  \]
	\end{proof}
	
	\begin{prop}
		$J_n$ is lower semi-continuous and coercive.
	\end{prop}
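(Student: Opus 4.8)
The plan is to obtain both properties directly from the two-sided bounds furnished by Lemma \ref{lemma-const}, namely $0<C_T\leq \rho_{0,n},\rho_{\ast,n}\leq C_{n,T}$ on $[0,T]\times[0,1]$. These bounds say precisely that the weighted $L^2$ structure underlying $J_n$ is comparable to the ambient norm of $[L^2(\dom)]^3$, and so each of the three desired properties will reduce to an elementary fact about that norm. Note in passing that the upper bound already guarantees $J_n(u,v,h)\leq \tfrac{1}{2}C_{n,T}^2\|(u,v,h)\|_{[L^2(\dom)]^3}^2<+\infty$, so $J_n$ is finite-valued on all of $[L^2(\dom)]^3$.

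For coercivity I would invoke only the lower bound. Since $\rho_{0,n}^2,\rho_{\ast,n}^2\geq C_T^2$ pointwise, we get
\[
J_n(u,v,h)\;\geq\;\frac{C_T^2}{2}\left(\intq (|u|^2+|v|^2)+\intq |h|^2\right)\;=\;\frac{C_T^2}{2}\,\|(u,v,h)\|_{[L^2(\dom)]^3}^2,
\]
which tends to $+\infty$ as $\|(u,v,h)\|_{[L^2(\dom)]^3}\to+\infty$. As $C_T$ depends only on $T$, this bound is uniform and coercivity follows at once.

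For lower semi-continuity I would actually establish the stronger statement that $J_n$ is continuous on $[L^2(\dom)]^3$, using the upper bound. Because $\rho_{0,n},\rho_{\ast,n}\in L^\infty(\dom)$ with sup-norm at most $C_{n,T}$, the multiplication maps $w\mapsto \rho_{0,n}w$ and $w\mapsto \rho_{\ast,n}w$ are bounded linear operators on $L^2(\dom)$. Hence, if $(u_k,v_k,h_k)\to(u,v,h)$ in $[L^2(\dom)]^3$, then $\rho_{0,n}u_k\to\rho_{0,n}u$, $\rho_{0,n}v_k\to\rho_{0,n}v$ and $\rho_{\ast,n}h_k\to\rho_{\ast,n}h$ in $L^2(\dom)$, and continuity of the $L^2$-norm yields $J_n(u_k,v_k,h_k)\to J_n(u,v,h)$. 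Continuity implies lower semi-continuity, and together with the strict convexity already noted (being a sum of squared norms) also gives weak lower semi-continuity on the reflexive space $[L^2(\dom)]^3$.

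I do not expect any genuine obstacle in this argument: all the analytic difficulty has been absorbed into Lemma \ref{lemma-const}, whose two-sided bounds collapse the weighted functional into an equivalent multiple of the flat Hilbert-space norm. The only point requiring a little care is matching each bound to its role — the lower bound drives coercivity while the upper bound drives continuity — and recognizing that these two facts, combined with strict convexity, furnish exactly the hypotheses of Proposition 1.2 in \cite{ekeland1999convex} needed to produce the unique minimizer of $J_n$ over the closed affine constraint set $\mathcal{C}$ used in the proof of Theorem \ref{linearcontrol}.
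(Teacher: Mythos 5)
Your proof is correct and follows essentially the same route as the paper: both arguments rest entirely on the two-sided bounds of Lemma \ref{lemma-const}, using the lower bound $\rho_{0,n},\rho_{\ast,n}\geq C_T$ to get coercivity and the upper bound to show that $(u,v,h)\mapsto(\rho_{0,n}u,\rho_{0,n}v,\rho_{\ast,n}h)$ is continuous on $[L^2(\dom)]^3$, whence $J_n$ is continuous and in particular lower semi-continuous. Your explicit remark that the multiplication operators are bounded because the weights lie in $L^\infty$ is simply a cleaner justification of a step the paper leaves implicit.
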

	
	\begin{proof}
		Firstly, note that for each $n\in \mathbb{N}^\ast$, the last lemma gives us that the norms in the definition of $J_n$ are equivalents to the norms in $L^2(\dom)$.
		
		Given a sequence   $(u_k,v_k,h_k)_{k=1}^\infty$ in  $\mathbf{ \left[L^2(\dom)\right]^3}$  such that
		\[ (u_k,v_k,h_k) \to (u,v,h)\ \text{ in } \mathbf{ \left[L^2(\dom)\right]^3},\]
		we have 
		\[ \rho_{0,n}u_k\to \rho_{0,n} u,\ \rho_{0,n}v_k\to \rho_{0,n} v  \text{ and } \rho_{\ast,n}h_k\to \rho_{\ast,n} h \text{ in } L^2(\dom), \text{ as } k\to +\infty,
		\]
		where $n$ is fixed.
		In particular, 
		\[\n{\rho_{0,n}u_k}{L^2}\to \n{\rho_{0,n}u}{L^2}, \ \n{\rho_{0,n}v_k}{L^2}\to \n{\rho_{0,n}v}{L^2} \text{ and } \n{\rho_{0,n}h_k}{L^2}\to \n{\rho_{0,n}h}{L^2}.
		\]
		As a consequence,
		\begin{align*}
		J_n(u,v,h)&=\frac{1}{2}\left(\n{\rho_{0,n}u}{L^2}^2+\n{\rho_{0,n}v}{L^2}^2+\n{\rho_{\ast,n}h}{L^2}^2\right)\\
		&=\lim\limits_{k\to \infty}\frac{1}{2}\left(\n{\rho_{0,n}u_k}{L^2}^2+\n{\rho_{0,n}v_k}{L^2}^2 +\n{\rho_{\ast,n}h_k}{L^2}^2\right)\\
		& =\lim\limits_{k\to \infty}J_n(u_k,v_k,h_k),
		\end{align*}
		which proves that $J_n$ is continuous and consequently lower semi-continuous.
		
		Analogously, given a sequence  $(u_k,v_k,h_k)_{k=1}^\infty \text{ in } \mathbf{ \left[L^2(\dom)\right]^3}$  such that
		\[ \n{(u_k,v_k,h_k)}{ \mathbf{ \left[L^2(\dom)\right]^3}} \to +\infty,\]
		we have that
		\begin{align*}
		J_n(u_k,v_k,h_k)&=\frac{1}{2}\left(\n{\rho_{0,n}u_k}{L^2}^2+\n{\rho_{0,n}v_k}{L^2}^2 +\n{\rho_{\ast,n}h_k}{L^2}^2\right)\\
		& \geq C_T\left(\n{u_k}{L^2}^2+\n{v_k}{L^2}^2 +\n{h_k}{L^2}^2\right)\to+\infty.
		\end{align*}
		Therefore, $J_n$ is coercive.
	\end{proof}

%
	
\end{document}